\documentclass[11pt]{amsart}
\usepackage[utf8]{inputenc}
\usepackage[english]{babel}

\usepackage{cite}
\usepackage{amscd,amssymb}
\usepackage{amsmath}
\usepackage{amsthm}
\usepackage{mathrsfs}
\usepackage{graphicx}
\usepackage{fancybox}
\usepackage{epic,eepic}
\usepackage{amstext}
\usepackage{mathtools}
\usepackage{esint}
\usepackage[square,numbers]{natbib}
\usepackage{booktabs}
\usepackage{hyperref}
\usepackage{comment}
\usepackage{mathtools}
\usepackage{tikz,pgfplots}
\usepackage{subcaption}
\usepackage{cancel}
\usepackage{bm}
\usepackage{tikz}
\usepackage{tikz-3dplot}
\usepackage{tikz-cd}
\usepackage[most]{tcolorbox}
\usepackage{cleveref}
\newcounter{quotecount}
\newcommand{\newquote}[1]{\vspace{0.5cm}\refstepcounter{quotecount}%
     \parbox{10cm}{\em #1}\hspace*{2cm}(\arabic{quotecount})\\[0.5cm]}

\def\H{\mathcal{H}}

\definecolor{myBlue}{RGB}{113,104,238} %
\definecolor{myGreen}{RGB}{114,175,30}
\definecolor{myRed}{RGB}{180,50,50}
\definecolor{myOrange}{RGB}{225,92,22}

\newtheorem{theorem}{Theorem}[section]
\newtheorem{corollary}[theorem]{Corollary}
\newtheorem{lemma}[theorem]{Lemma}
\newtheorem{assumption}[theorem]{Assumption}
\newtheorem{example}[theorem]{Example}

\theoremstyle{definition}
\newtheorem{definition}[theorem]{Definition}
\theoremstyle{remark}
\newtheorem{remark}[theorem]{Remark}
\numberwithin{theorem}{section}
\numberwithin{equation}{section}
\numberwithin{table}{section}
\numberwithin{figure}{section}

\allowdisplaybreaks

\usepackage{xcolor}

\newcommand{\0}{\mathaccent23}

\newcommand\grad{\operatorname{grad}}

\renewcommand\div{\operatorname{div}}

\newcommand\curl{\operatorname{curl}}

\usetikzlibrary{quotes}
\usetikzlibrary{decorations.markings,arrows}
\usepackage{soul}

\usepackage{diagbox}

\let \vec \bm
\hypersetup{hidelinks}
\begin{document}

\title[]{Finite element exterior calculus for spectra and pseudospectra of advection-diffusion of differential forms}
\author{Daniele Boffi}
\address{Computer, Electrical and Mathematical Sciences and Engineering,
King Abdullah University of Science and Technology}
\address{Dipartimento di Matematica ``F. Casorati'',
University of Pavia}
\address{IMATI ``E. Magenes'', CNR, Pavia}
\email{daniele.boffi@kaust.edu.sa}

\author{Kaibo Hu}
\address{Mathematical Institute, University of Oxford,
Andrew Wiles Building, Woodstock Road,
Oxford OX2 6GG, UK}
\email{kaibo.hu@maths.ox.ac.uk}

\author{Yizhou Liang}
\address{Mathematical Institute, University of Oxford,
Andrew Wiles Building, Woodstock Road,
Oxford OX2 6GG, UK}
\email{yizhou.liang@maths.ox.ac.uk}

\author{Umberto Zerbinati}
\address{Mathematical Institute, University of Oxford,
Andrew Wiles Building, Woodstock Road,
Oxford OX2 6GG, UK}
\email{umberto.zerbinati@oriel.ox.ac.uk}

\dedicatory{Dedicated to Professor Douglas N. Arnold\\ for his pioneering contributions to finite element exterior calculus}
\subjclass[2020]{65N25, 65N30}

\maketitle

\begin{abstract}
Numerical investigations of dynamo action have remained active in fluid mechanics over the past decades, presenting numerous challenges and open problems. Meanwhile, the development of structure-preserving methods and finite element exterior calculus (FEEC) inspires a revisit of numerical dynamo studies and an exploration of existing open questions in computation. In this paper, we present a FEEC approach for dynamo problems. In particular, we investigate structure-preserving finite element schemes for computing the spectra and pseudospectra of advection-diffusion operators of differential forms. The schemes and their analysis are based on finite element de~Rham complexes.
\end{abstract}

\section{Introduction}
Dynamo theory is concerned with the physical mechanisms responsible for the generation of magnetic fields in astrophysical bodies and electrically conducting fluids. Within this framework, kinematic dynamo theory specifically investigates the types of fluid motions capable of producing exponential amplification of a magnetic field when magnetic diffusivity is small. Given its evident importance for solar physics and geophysics, dynamo theory is a key branch of \textit{topological (magneto)hydrodynamics}~\cite{arnoldkhesin2021,ricca1996topological}.

Mathematically, a simplified problem (linear or kinematic dynamo theory) involves solving for the magnetic field $\bm{B}$ via the magnetic advection-diffusion equation
\begin{equation}\label{eq:MHD}
\partial_t \bm{B} - \nabla \times (\bm{u} \times \bm{B}) + R_m^{-1} \nabla\times \nabla\times  \bm{B} = 0,
\end{equation}
with a fixed velocity field $\bm{u}$ and magnetic Reynolds number $R_m$.
In this simplified model, one ignores the feedback of the magnetic field on the fluid. This simplification is justified when the magnetic field is small, i.e., at the onset of dynamo action.

The following problem was formulated (see Problem 1.2, §5 in \cite{arnoldkhesin2021}, with references to  Ya.B. Zeldovich and A.D. Sakharov):
\begin{quote}
Does there exist a divergence-free velocity field $\bm{u}$ in a domain $M$ such that the energy $\mathcal{E}(t) := \|\bm{B}(t)\|_{L^{2}(M)}^{2}$ of the magnetic field $\bm{B}(t)$ grows exponentially in time for some initial field $\bm{B}(0) = \bm{B}_{0}$ and for arbitrarily low diffusivity?
\end{quote}

Numerical investigations have drawn attention from both fluid mechanists and mathematicians throughout the development of dynamo theory \cite{galloway1984numerical,archontis2003numerical,bouya2013revisiting,arnold1983growth}. A major computational challenge is to find the eigenvalue with the largest real part of the advection-diffusion operator (with a negative sign)
\begin{equation}
\label{eq:advectionDiffusionOperator}
\mathcal{L}_{\bm{u}} \colon \bm{B} \mapsto \nabla \times (\bm{u} \times \bm{B}) - R_m^{-1} \nabla \times \nabla \times \bm{B}.
\end{equation}
The leading eigenvalue corresponds to the fastest-growing mode.

In 1983, Arnold and Korkina \cite{arnold1983growth} conducted numerical experiments using the ABC flow for $\bm{u}$, a flow exhibiting chaotic behavior and a candidate for dynamo action. Their computations were based on Galerkin methods with magnetic Reynolds number $R_m \leq 19$ due to computational limitations. This computation was followed by several works, see, e.g., \cite{galloway1984numerical,archontis2003numerical,bouya2013revisiting}.

Arnold and Khesin~\cite[Page~305]{arnoldkhesin2021} further commented that

\newquote{\label{arnold-khesin}
\dots It is still unknown whether this field is a fast kinematic dynamo, e.g., whether an exponentially growing mode of $\bm{B}$ survives as $R_m \to \infty$.

\dots

Numerically, the kinematic fast dynamo problem is the first eigenvalue problem for matrices of the order of many million, even for reasonable Reynolds numbers (of the order of hundreds). The physically meaningful magnetic Reynolds numbers $R_m$ are of order of magnitude $10^{8}$. The corresponding matrices are (and will remain) beyond the reach of any computer.
}

In parallel, progress relevant to MHD problems has been made in the numerical analysis community over the past decades. Around the time of the Arnold--Korkina computations~\cite{arnold1983growth}, spurious solutions (particularly spurious eigenmodes) in computational electromagnetism arising from standard scalar (Lagrange) finite elements drew attention \cite{bossavit2002solving}. The Nédélec element \cite{1980Nedelec,1986Nedelec} led to discretizations based on edge degrees of freedom, mimicking physical structures as also achieved in staggered-grid approaches such as the Yee scheme. Later, Bossavit \cite{1998Bossavit} and Hiptmair \cite{2002Hiptmair} realized that the Nédélec element, together with other finite elements discovered independently, fits into a discrete de~Rham complex and is a special case of Whitney forms in geometric integration theory \cite{1957Whitney}. This perspective inspired the systematic development of finite element differential forms and finite element exterior calculus (FEEC) \cite{2006ArnoldFalkWinther,2010ArnoldFalkWinther}. The Nédélec element, originally motivated by the need to eliminate spurious eigenmodes, has become standard in computational electromagnetism \cite{2003Monk}.

The underlying finite element de~Rham complex also enables structure preservation, which is particularly important in problems involving nonlinear coupling, such as MHD. FEEC-based approaches for MHD problems have been developed, with several key quantities preserved exactly at the discrete level, such as the magnetic Gauss law ($\nabla \cdot \bm{B} = 0$), energy, and various notions of helicity \cite{hiptmair2018fully,2017HuMaXu,2021HuLeeXu,laakmann2023structure,mao2025incompressibility,zhang2024decoupled,ma2016robust,he2025topology,gawlik2022finite}.

All the above approaches address MHD source problems. Meanwhile, mixed finite elements and FEEC for Maxwell eigenvalue problems have been systematically developed \cite{2006ArnoldFalkWinther,2010Boffi}. However, to the best of our knowledge, structure-preserving finite element computation and analysis for MHD eigenvalue problems, particularly for the dynamo problem, remain open.

The advancements in FEEC for MHD source problems and Maxwell eigenvalue problems motivate several questions about the Arnold--Korkina 1983 computations and subsequent efforts:
\begin{enumerate}
  \item Do these computations exhibit spurious eigenmodes, as no numerical analysis has been presented, to the best of our knowledge?
  \item Can we achieve computations for large and physically relevant Reynolds numbers?
\end{enumerate}

The aim of this paper is to investigate the computation of dynamo action using structure-preserving finite element methods. Specifically, we propose a finite element scheme for solving the MHD eigenvalue problem (the most natural choice from a FEEC perspective) and analyze its convergence. We provide numerical results and compare them with existing results from the fluid mechanics literature.

Moreover, advances in pseudospectra over the past decades (see \cite{2005Trefethen} for a summary) call into question the first claim in the quotation \eqref{arnold-khesin}, namely that ``\textit{numerically, the kinematic fast dynamo problem is the first eigenvalue problem}.'' Indeed, it is well known that for problems involving non-normal operators, eigenvalues can provide misleading information about instability, particularly in transient regimes and turbulent flows \cite{2005Trefethen,2007Schmid}. Additional insight, especially into transient dynamics, is captured by the pseudospectra. Consequently, computing pseudospectra and comparing the results with eigenvalue analyses are important topics. Although finite element methods for eigenvalue problems and their convergence properties have been extensively studied over the past decades \cite{2006ArnoldFalkWinther,2010Boffi}, pseudospectra in this context have remained largely unexplored. In this paper, we help bridge this gap by establishing convergence results for the finite element approximation of pseudospectra.

In fact, the linear dynamo operator (magnetic advection-diffusion) is a special case of advection-diffusion for differential forms. Depending on the degree of the differential forms, other examples include scalar advection-diffusion and the Fokker--Planck equation~\cite{arnoldkhesin2021}. In this paper, we work in the generality of advection-diffusion for differential forms, thus also addressing these applications.

The rest of this paper is organized as follows.
 Section \ref{sec:notation} introduces notation and preliminaries.  Section \ref{sec:pseudospectra} establishes results on the approximation of pseudospectra.  Sections \ref{sec:convergence1} and \ref{sec:convergence2} establish the operator convergence needed for verifying the approximation of spectra and pseudospectra for the half and full Hodge Laplacian, respectively. Section \ref{sec:numerical} presents numerical experiments. %
 Section \ref{sec:conclusions} provides concluding remarks.

\section{Notation and preliminaries}\label{sec:notation}

Let $\Omega\subset \mathbb R^n$ be a bounded Lipschitz domain with polyhedral boundary. In this paper, the domain $\Omega$ is allowed to have non-trivial topology. Let $\Lambda^k_{\mathbb C}(\Omega)$ denote the space of smooth complex-valued differential $k$-forms on $\Omega$. The smooth complex-valued de~Rham complex is given by
\begin{equation*}
    \begin{tikzcd}
        0\ar{r} & \Lambda^0_{\mathbb C}(\Omega)\ar{r}{d^0} &\Lambda^1_{\mathbb C}(\Omega)\ar{r}{d^1} & \cdots \ar{r}{d^{n-1}}& \Lambda^n_{\mathbb C}(\Omega)\ar{r}& 0,
    \end{tikzcd}
\end{equation*}
where the differentials $d^k$ are exterior derivatives satisfying $d^{k+1}\circ d^k=0$. In this paper, we assume that all complexes that appear are closed, in the sense that all the differential operators
 have closed range. Let $u$ be a smooth vector field. The \emph{Lie derivative} $L_{u}:\Lambda^{k}_{\mathbb C}(\Omega)\rightarrow\Lambda^k_{\mathbb C}(\Omega)$ on smooth differential $k$-forms is defined using the exterior derivative and the contraction operator with $ u$ as:
\begin{equation*}
    L_u \omega = \mathrm{i}^{k+1}_{ u}d^k \omega + d^{k-1}\mathrm{i}^{k}_{ u} \omega,\quad \forall  \omega\in \Lambda^k_{\mathbb C}(\Omega),
\end{equation*}
where $\mathrm{i}^{k}_{ u}: \Lambda^k_{\mathbb C}(\Omega)\rightarrow \Lambda^{k-1}_{\mathbb C}(\Omega)$ denotes the interior product (contraction) with $ u$. The $L^2$-inner product of differential $k$-forms on the domain $\Omega$ is defined as:
\begin{equation*}
    ( \omega,\eta) : =\int_{\Omega}\omega\wedge\star \bar\eta,
\end{equation*}
where $\star: \operatorname{Alt}^kV\rightarrow \operatorname{Alt}^{n-k}V$ is the Hodge operator satisfying $\star(\star \omega) = (-1)^{k(n-k)}\omega$ for any $\omega\in \Lambda^k_{\mathbb C}(\Omega)$. The completion of $\Lambda^k_{\mathbb C}(\Omega)$ in the corresponding norm $\|\cdot\|_{L^2}^2 = (\cdot,\cdot)$ defines the Hilbert space $L^2\Lambda^k(\Omega)$. We then define the Sobolev spaces $H^s\Lambda^k(\Omega)$ consisting of differential forms for which the coefficients belong to $H^s(\Omega)$ for any real number $s$. The norm is denoted by $\|\cdot\|_{H^s}$. Define
\begin{equation*}
    H\Lambda^k(\Omega):=\{ \omega\in L^2\Lambda^k(\Omega):d^k\omega\in L^2\Lambda^{k+1}(\Omega)\}.
  \end{equation*}
We obtain the $L^2$ de~Rham complex with the domain complex
\begin{equation}\label{eq:deRham}
    \begin{tikzcd}
        0\ar{r} & H\Lambda^0(\Omega)\ar{r}{d^0} &H\Lambda^1(\Omega)\ar{r}{d^1} & \cdots \ar{r}{d^{n-1}}& H\Lambda^n(\Omega)\ar{r}& 0.
    \end{tikzcd}
\end{equation}
We also use the de~Rham complex with homogeneous boundary conditions
\begin{equation*}
    \begin{tikzcd}
        0\ar{r} & \mathring{H}\Lambda^0(\Omega)\ar{r}{d^0} &\mathring{H}\Lambda^{1}(\Omega)\ar{r}{d^1} & \cdots \ar{r}{d^{n-1}}& \mathring{H}\Lambda^n(\Omega)\ar{r}& 0.
    \end{tikzcd}
\end{equation*}
with
\begin{equation*}
    \mathring{H}\Lambda^k(\Omega)=\{\omega\in H\Lambda^k(\Omega):\operatorname{Tr}(\omega) =0\}.
\end{equation*}
The $k$-th cohomology space of \eqref{eq:deRham} is isomorphic to the space $\mathcal{H}^k$ of harmonic $k$-forms on $\Omega$:
\begin{equation*}
    \mathcal{H}^k = \{\omega\in H\Lambda^k(\Omega):d^k\omega=0,(\omega,d^{k-1}\eta) = 0,\forall\eta\in H\Lambda^{k-1}(\Omega)\}.
\end{equation*}
In analogy with $H\Lambda^k(\Omega)$, we define the space
\begin{equation*}
    H^*\Lambda^k(\Omega):= \star H\Lambda^{n-k}(\Omega)=\{ \omega\in L^2\Lambda^k(\Omega):\delta^k\omega\in L^2\Lambda^{k-1}(\Omega)\} ,
\end{equation*}
where $\delta^{k}:\Lambda^k_{\mathbb C}(\Omega)\rightarrow \Lambda^{k-1}_{\mathbb C}(\Omega)$ is the coderivative operator
\begin{equation*}
    \star\delta^k=(-1)^kd^{n-k}\star.
\end{equation*}
For any $v\in \Lambda^k_{\mathbb C}(\Omega)$ and $w\in  \Lambda^{k+1}_{\mathbb C}(\Omega)$, Stokes' formula holds:
\begin{equation*}
    (d^kv,w) = (v,\delta^{k+1}w) + \int_{\partial\Omega}\operatorname{Tr}v\wedge\operatorname{Tr}(\star \bar{w}).
\end{equation*}
The norm of $H^*\Lambda^k(\Omega)$ is defined as $\|\omega\|^{2}_{H^*\Lambda} = \|\omega\|^{2}_{L^2} +\|\delta^k\omega\|^{2}_{L^2}$. The space $H^*\Lambda^k(\Omega)$ with homogeneous boundary conditions is defined as:
\begin{equation*}
    \mathring{H}^*\Lambda^k(\Omega):=\star \mathring{H}\Lambda^{n-k}(\Omega)=\{\omega\in H^*\Lambda^k(\Omega):\operatorname{Tr}(\star\omega) =0\} .
\end{equation*}
The adjoint of \eqref{eq:deRham} (more precisely, the domain of the adjoint of the $L^{2}$ de~Rham complex with domain \eqref{eq:deRham}) is
\begin{equation}\label{eq:deRham_dual}
    \begin{tikzcd}
        0\ar{r} & \mathring{H}^*\Lambda^n(\Omega)\ar{r}{\delta^n} &\mathring{H}^*\Lambda^{n-1}(\Omega)\ar{r}{\delta^{n-1}} & \cdots \ar{r}{\delta^{1}}& \mathring{H}^*\Lambda^0(\Omega)\ar{r}& 0.
    \end{tikzcd}
\end{equation}
This provides a further characterization of the harmonic forms:
\begin{equation*}
    \mathcal{H}^k =\{\omega\in H\Lambda^k(\Omega)\cap \mathring{H}^*\Lambda^k(\Omega):d^k\omega=0,\delta^k\omega=0\}.
\end{equation*}
For $0\leq s\leq 1$,  a domain $\Omega$ is called $s$-regular if for any $v\in H\Lambda^k(\Omega)\cap \mathring{H}^*\Lambda^k(\Omega)$, we have
\begin{equation}\label{eq:s-regular}
    \|v\|_{H^s}\leq C(\|v\|_{L^2} + \|d^kv\|_{L^2} + \|\delta^kv\|_{L^2})
\end{equation}
with a constant $C>0$ independent of $v$. A Lipschitz domain is $\frac{1}{2}$-regular \cite{2001Mitrea}, and for a Lipschitz polyhedron in $\mathbb R^3$, we can choose $1/2<s\leq 1$ \cite{1998Amrouche}. A convex domain is $1$-regular.  In this paper, we assume that the domain $\Omega$ is $s$-regular with $1/2< s\leq 1$.

Let $\{\mathcal{T}_h\}$ be a family of shape-regular simplicial triangulations of $\Omega$ with the mesh size $h:=\max_{K\in\mathcal{T}_h}\operatorname{diam}(K)$. In this paper, the numerical schemes are based on the discretization of the dual complex $\eqref{eq:deRham_dual}$. Hence, we consider the finite element spaces $\Lambda_h^k\subset \mathring{H}^*\Lambda^k(\Omega)$ and the corresponding complex
\begin{equation}\label{discrete-delta-complex}
    \begin{tikzcd}
        0\ar{r} &\Lambda_h^n\ar{r}{\delta^n} &\Lambda_h^{n-1}\ar{r}{\delta^{n-1}} & \cdots \ar{r}{\delta^{1}}& \Lambda_h^0\ar{r}& 0.
    \end{tikzcd}
\end{equation}
\begin{remark}
The reason we choose to base all subsequent developments on \eqref{eq:deRham_dual} and \eqref{discrete-delta-complex}, rather than the more conventional \eqref{eq:deRham}, originates from the MHD problem \eqref{eq:MHD}. In that system, $\bm B$ is a 2-form satisfying $d\bm B = 0$ (i.e., $\div \bm B = 0$). The diffusion term in \eqref{eq:MHD} reduces to $d\delta \bm B$, which we discretize in weak form as $(\delta \bm B, \delta \bm C)$ using a test 2-form $\bm C$. This approach naturally requires a conforming discretization built around the dual de Rham complex \eqref{eq:deRham_dual}.

The preference for \eqref{eq:deRham_dual} over \eqref{eq:deRham} is largely notational rather than substantial. In three-dimensional vector proxy notation, the dual complex \eqref{eq:deRham_dual} corresponds to $-\nabla$, $\curl$, and $-\div$, which is essentially equivalent to the standard complex \eqref{eq:deRham}. In the broader framework of finite element exterior calculus (FEEC), conforming discretizations of \eqref{eq:deRham} are well established \cite{2006ArnoldFalkWinther,2010ArnoldFalkWinther}. One can then apply a Hodge star to obtain a conforming discretization of \eqref{eq:deRham_dual}. Similarly, composing bounded cochain projections of \eqref{eq:deRham} with a Hodge star yields the corresponding projections for \eqref{eq:deRham_dual}.
\end{remark}

One may derive constructions of $\Lambda_h^k$ from standard results in \cite{2006ArnoldFalkWinther,2010ArnoldFalkWinther}. For \eqref{eq:deRham}, we have conforming finite element spaces $\widetilde{\Lambda}^{k}_{h}\subset \mathring{H}\Lambda^k(\Omega)$. Define $\Lambda^{k}_{h}:=\star\widetilde{\Lambda}^{n-k}_{h}\subset \mathring{H}^*\Lambda^k$, which leads to a conforming finite element subcomplex of \eqref{eq:deRham_dual}.

In the rest of this paper, we will rely on the existence of bounded cochain projections.
\begin{assumption}\label{assump:interpolation}
We assume that there exist bounded projections $\tilde\pi_h^k: \0 H\Lambda^k(\Omega)\rightarrow \widetilde{\Lambda}^{k}_{h}$ such that
\begin{equation*}
    d^k\tilde\pi_h^{k} = \tilde\pi_h^{k+1
    }d^k,
\end{equation*}
i.e., the following diagram commutes:
\begin{equation*}
    \begin{tikzcd}
    0\ar[r,""]&\mathring{H}\Lambda^0(\Omega) \arrow[r, "d^0"] \arrow[d, "\tilde\pi_h^0"]   & \cdots\arrow[r, "d^{n-1}"] \arrow[d, ""]  &  \mathring{H}\Lambda^n(\Omega)\ar[r,""] \arrow[d, "\tilde\pi_h^n"] &   0\\
    0\ar[r,""]& \widetilde\Lambda_h^0\arrow[r, "d^0"] & \cdots\ar[r,"d^{n-1}"] &  \widetilde\Lambda_h^n \ar[r,""]&0.
\end{tikzcd}
\end{equation*}
Moreover, for any $u\in \mathring{H}\Lambda^k(\Omega)\cap H^t\Lambda^k(\Omega)$ with $1/2<t\leq 1$, we have
\begin{equation*}
    \|u -\tilde{\pi}_h^ku\|_{L^2}\leq
        Ch^t|u|_{H^t},
\end{equation*}
where the constant $C>0$ depends on $t$ but can grow unboundedly as $t\rightarrow 1/2$.
\end{assumption}

An example of interpolation operators satisfying the requirements in Assumption \ref{assump:interpolation} can be found in \cite[Sections~22, 23]{2021ErnGuermond}. With these operators, we can define  $\pi_h^k:\mathring{H}^*\Lambda^k(\Omega)\rightarrow \Lambda_h^k$ by
\begin{equation*}
    \pi_h^k = (-1)^{k(n-k)}\star\tilde{\pi}_h^{n-k}\star.
\end{equation*}

At the end of this section, we introduce the discrete harmonic forms $$\H_h^{k}:=\{  u\in  \Lambda_h^k:\delta^k u=0,( u,\delta^{k+1} v)=0, \forall v\in \Lambda^{k+1}_h\}.$$ The following holds  \cite{2006ArnoldFalkWinther}.
\begin{lemma}\label{lem:discrete_har:appro}
    For any discrete harmonic $k$-form $ q\in \H_{h}^k$, there exists $ r\in \mathcal H^k$ such that $\| r\|_{L^2}\leq \| q\|_{L^2}$ and
    \begin{equation*}
        \| q- r\|_{L^2}\leq \|(I-\pi_h^k) r\|_{L^2}\leq Ch^s\|r\|_{H^s}\leq Ch^s\|q\|_{L^2}.
    \end{equation*}
\end{lemma}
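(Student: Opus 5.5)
The plan is to follow the standard argument of Arnold--Falk--Winther for discrete harmonic forms, transported to the dual complex \eqref{eq:deRham_dual}. Given $q\in\H_h^k$, I take $r$ to be the $L^2$-orthogonal projection of $q$ onto $\mathcal H^k$, where $\mathcal H^k$ is characterized as $\delta$-closed forms in $\mathring H^*\Lambda^k$ that are orthogonal to $\delta\mathring H^*\Lambda^{k+1}$ (the harmonic forms of the dual complex; by the characterization above these coincide with $\mathcal H^k$). Then $\|r\|_{L^2}\le\|q\|_{L^2}$ is immediate from orthogonality.

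\textbf{Step 1: $q-r\perp \H^k_h$-type decomposition.} Since $\delta^k q=0$ and $q\in\mathring H^*\Lambda^k$, the Hodge decomposition of the kernel of $\delta^k$ gives $q = r + \delta^{k+1}\sigma$ for some $\sigma\in\mathring H^*\Lambda^{k+1}$; this uses the closed-range assumption. Thus $q-r=\delta\sigma$ lies in the range of $\delta$, which is orthogonal to $\mathcal H^k$.

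\textbf{Step 2: the key estimate.} I write
\begin{equation*}
\|q-r\|_{L^2}^2=(q-r,\delta\sigma) = -(r,\delta\sigma)+ (q,\delta\sigma).
\end{equation*}
Since $r\perp\delta\sigma$, it follows that $\|q-r\|^2=(q,\delta\sigma)$. Now I use the commuting projection: $\pi_h^k\delta\sigma=\delta\pi_h^{k+1}\sigma\in\delta\Lambda_h^{k+1}$, which is orthogonal to $q$ by the definition of $\H_h^k$. Hence
\begin{equation*}
\|q-r\|^2=(q,(I-\pi_h^k)\delta\sigma) = (q, (I-\pi_h^k)(q-r)).
\end{equation*}
Because $\pi_h^k$ is a projection onto $\Lambda_h^k$ and $q\in\Lambda_h^k$, we have $(I-\pi_h^k)(q-r)=-(I-\pi_h^k)r$. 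Then
\begin{equation*}
\|q-r\|^2 = -(q,(I-\pi_h^k)r) = -(q-r,(I-\pi_h^k)r),
\end{equation*}
where the last equality uses $r\perp (I-\pi^k_h)r$? That is not automatic. Instead I bound it as follows: $(r,(I-\pi_h^k)r)$ equals $(r,\delta\pi_h^{k+1}\sigma')$-type terms. More cleanly, $(I-\pi_h^k)r = r-\pi_h^k r$, and $\pi^k_h r$ is discrete $\delta$-closed because $\delta\pi_h^k r=\pi_h^{k-1}\delta r=0$. Moreover, $q-\pi_h^k r$ is discrete closed, and
\[
\|q-r\|^2=(q-r,q-r)\le (q-r,q-r)+\|\text{stuff}\|.
\]
I expect this bookkeeping to be the main obstacle. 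My first attempt is the AFW trick: since $q-r=\delta\sigma\perp\mathcal H^k$ and $q\perp\delta\Lambda_h^{k+1}\supset \delta\pi_h\sigma=\pi_h(q-r)$, we get
\[
\|q-r\|^2=(q-r,q-r)=(q-r,\,(q-r)-\pi_h(q-r))+(q-r,\pi_h(q-r)).
\]
The second term is $-(r,\pi_h(q-r))=-(r,\pi_h q-\pi_h r)=(r,\pi_h r - q)$, and the identity $(q-r,(I-\pi_h)(q-r))=(q-r,-(I-\pi_h)r)$ handles the first. Combining these carefully, I aim to reach $\|q-r\|^2\le\|q-r\|\,\|(I-\pi_h^k)r\|$. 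If it is needed, I would instead use the AFW formulation $\|q-r\|\le\|(I-\pi_h)r\|$ via $L^2$-bounded cochain projections, which requires smoothed projections as in \cite{2006ArnoldFalkWinther}.

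\textbf{Step 3: regularity.} Since $r\in\mathcal H^k\subset H\Lambda^k\cap\mathring H^*\Lambda^k$ with $dr=0$ and $\delta r=0$, $s$-regularity \eqref{eq:s-regular} gives $\|r\|_{H^s}\le C\|r\|_{L^2}\le C\|q\|_{L^2}$. Applying Assumption \ref{assump:interpolation} to $\star r$ (with $t=s>1/2$), together with the definition $\pi_h^k=\pm\star\tilde\pi_h^{n-k}\star$ and the fact that $\star$ is an $L^2$- and $H^s$-isometry, yields $\|(I-\pi_h^k)r\|_{L^2}\le Ch^s\|r\|_{H^s}$, which completes the chain of inequalities.
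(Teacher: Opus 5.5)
Your overall strategy matches the Arnold--Falk--Winther argument that the paper cites for this lemma: take $r$ to be the $L^2$-orthogonal projection of $q$ onto $\mathcal H^k$, write $q-r=\delta^{k+1}\sigma$, and use the commuting projection. Steps 1 and 3 are fine.

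Step 2, however, is the heart of the lemma, and you never close it. You reduce to $\|q-r\|^2=-(q,(I-\pi_h^k)r)$. You then assert that replacing $q$ by $q-r$ ``is not automatic'', and only say you ``aim to reach'' $\|q-r\|^2\le\|q-r\|\,\|(I-\pi_h^k)r\|$. So the central inequality is left unproved. Your fallback to smoothed $L^2$-bounded projections also misdiagnoses the difficulty: nothing beyond the cochain-projection property is needed.

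The missing observation is that $r\in\mathcal H^k$ is orthogonal to the whole range $\delta^{k+1}\mathring H^*\Lambda^{k+1}$. This range contains the discrete range $\delta^{k+1}\Lambda_h^{k+1}$. Now $\pi_h^k(q-r)=q-\pi_h^k r=\delta^{k+1}\pi_h^{k+1}\sigma$ lies in $\delta^{k+1}\Lambda_h^{k+1}$. Hence it is orthogonal to $q$ (by the definition of $\H_h^k$) \emph{and} to $r$, and therefore to $q-r$. Consequently
\[
\|q-r\|_{L^2}^2=(q-r,\,q-\pi_h^k r)+(q-r,\,\pi_h^k r-r)=(q-r,\,(\pi_h^k-I)r)\le\|q-r\|_{L^2}\,\|(I-\pi_h^k)r\|_{L^2}.
\]
In your own split, this means the second term $(q-r,\pi_h^k(q-r))$ vanishes outright. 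It also means the orthogonality $(r,(I-\pi_h^k)r)=0$, which you called non-automatic, does hold here, since $(I-\pi_h^k)r=\delta^{k+1}(\pi_h^{k+1}\sigma-\sigma)$. With this line inserted, your proof is complete.
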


\section{Approximation of pseudospectra}\label{sec:pseudospectra}

The motivation of Arnold and Korkina~\cite{arnold1983growth} for studying the eigenvalue problem
\begin{equation}\label{eq:dyn:eig}
    \mathcal{L}_{u}\, B = \lambda\, B,
\end{equation}
where $\mathcal{L}_u$ is the magnetic advection-diffusion operator of \eqref{eq:advectionDiffusionOperator} or, more generally, the Lie-derivative advection-diffusion operator \eqref{k-form-eigenvalue} acting on $k$-forms, is its connection with the dynamo problem through standard results from semigroup theory. Indeed, given a bounded linear operator $\mathcal{L}:X\to X$ on a Banach space $X$ that generates a $C_0$ semigroup $\{e^{t\mathcal{L}}\}_{t\geq 0}$, one has~\cite[Theorem 15.3]{2005Trefethen}
\begin{equation}\label{eq:growth:bound}
    \limsup_{t\to\infty}\; t^{-1} \log \| e^{t\mathcal{L}}\|_{\mathcal{L}(X;X)} = \alpha(\mathcal{L}) := \sup \left\{ \Re(\lambda) : \lambda \in \sigma(\mathcal{L})\right\},
\end{equation}
where $\alpha(\mathcal{L})$ is the \emph{spectral abscissa} of $\mathcal{L}$ and $\sigma(\mathcal{L}):=\{\lambda\in\mathbb C : \lambda I - \mathcal L \text{ is not bijective}\}$ is its spectrum. Identity \eqref{eq:growth:bound} would justify reducing the question of exponential growth of magnetic energy $\|B(t)\|_{L^2}^2$ to the eigenvalue problem~\eqref{eq:dyn:eig}.
However, the magnetic advection-diffusion operator $\mathcal{L}_u : D(\mathcal{L}_u)\subset L^2\Lambda^k(\Omega)\to L^2\Lambda^k(\Omega)$ is unbounded, and~\eqref{eq:growth:bound} does not directly apply.  Furthermore, as observed by Trefethen and Embree \cite{2005Trefethen} and by Schmid \cite{2007Schmid}, eigenvalues are often misleading indicators of stability for non-normal operators, i.e., operators for which $\mathcal{L}\mathcal{L}^*\neq \mathcal{L}^*\mathcal{L}$, so that even when \eqref{eq:growth:bound} is available the spectral abscissa may overlook the relevant transient dynamics. The dynamo operator $\mathcal{L}_u$ is non-normal in general.

The standard remedy in both cases is to replace the spectrum by the \emph{pseudospectrum}.

\begin{definition}[$\varepsilon$-pseudospectrum]\label{def:pseudospectrum}
    Let $X$ be a complex Hilbert space and $\mathcal{A}:D(\mathcal{A})\subset X\to X$ a closed linear operator. For any $\varepsilon>0$, the $\varepsilon$-pseudospectrum of $\mathcal{A}$ is
    \begin{equation}\label{eq:pseudospectrum}
        \sigma_\varepsilon(\mathcal{A}) := \left\{\lambda\in \mathbb C : \|(\lambda I - \mathcal{A})^{-1}\|_{\mathcal{L}(X;X)} > \varepsilon^{-1}\right\},
    \end{equation}
    with the convention that $\|(\lambda I -\mathcal A)^{-1}\|_{\mathcal L(X;X)} = +\infty$ whenever $\lambda \in \sigma(\mathcal{A})$, so that $\sigma(\mathcal{A})\subset \sigma_\varepsilon(\mathcal{A})$ for every $\varepsilon>0$.
\end{definition}

The pseudospectrum encodes the sensitivity of the spectrum to perturbations of the operator: for a normal operator one has $\sigma_\varepsilon(\mathcal A) = \{\lambda \in \mathbb C : \operatorname{dist}(\lambda,\sigma(\mathcal A))<\varepsilon\}$, while for non-normal operators $\sigma_\varepsilon(\mathcal A)$ may be much larger than this neighborhood. The following theorem gives the relevant counterpart of \eqref{eq:growth:bound} when $\mathcal L$ is unbounded but closed.

\begin{theorem}[Lower bounds on $\|e^{t\mathcal{L}}\|$; {\cite[Theorem 15.4]{2005Trefethen}}]\label{thm:gearhart-pruss}
    Let $\mathcal{L}:D(\mathcal{L})\subset X\to X$ be a closed linear operator on a Hilbert space $X$ that generates a $C_0$ semigroup $\{e^{t\mathcal{L}}\}_{t\geq 0}$. Then, for every $\varepsilon>0$,
    \begin{equation}\label{eq:gearhart-pruss}
        \varepsilon\sup_{t\geq 0} \|e^{t\mathcal{L}}\|_{\mathcal{L}(X;X)} \;\geq\; \alpha_\varepsilon(\mathcal{L}) := \sup\left\{ \Re(\lambda) : \lambda \in \sigma_\varepsilon(\mathcal{L})\right\},
    \end{equation}
    where $\alpha_\varepsilon(\mathcal{L})$ is the \emph{$\varepsilon$-pseudospectral abscissa} of $\mathcal{L}$.
\end{theorem}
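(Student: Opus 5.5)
We argue directly from the Laplace-transform representation of the resolvent, which is standard in $C_0$ semigroup theory. Set $M:=\sup_{t\geq 0}\|e^{t\mathcal L}\|_{\mathcal L(X;X)}\in[1,+\infty]$; note $M\geq 1$ because $e^{0\mathcal L}=I$. If $M=+\infty$ the inequality \eqref{eq:gearhart-pruss} is trivial, and if $\alpha_\varepsilon(\mathcal L)\leq 0$ it also holds trivially, since the left-hand side is at least $\varepsilon>0$. So we may assume $M<\infty$ and restrict attention to points $\lambda\in\sigma_\varepsilon(\mathcal L)$ with $\Re\lambda>0$.

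The key step is to show that every $z$ with $\Re z>0$ lies in the resolvent set, with the bound $\|(zI-\mathcal L)^{-1}\|\leq M/\Re z$. Define
\begin{equation*}
R(z)x:=\int_0^\infty e^{-zt}e^{t\mathcal L}x\,dt .
\end{equation*}
This Bochner integral converges absolutely because $\|e^{-zt}e^{t\mathcal L}x\|\leq Me^{-t\Re z}\|x\|$, which gives $\|R(z)\|\leq M/\Re z$. The standard semigroup computation then shows that $R(z)$ inverts $zI-\mathcal L$. One checks that $R(z)x\in D(\mathcal L)$ and $(zI-\mathcal L)R(z)x=x$, using the difference quotients $h^{-1}(e^{h\mathcal L}-I)R(z)x$ and letting $h\to 0^+$. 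One also checks $R(z)(zI-\mathcal L)x=x$ for $x\in D(\mathcal L)$, using that $\mathcal L$ commutes with $e^{t\mathcal L}$ on $D(\mathcal L)$ and that $\mathcal L$ is closed. I expect this verification to be the main (though routine) obstacle. It is exactly the Hille--Yosida-type fact that the open half-plane to the right of the growth bound lies in the resolvent set, and it could simply be cited.

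With this in hand, take $\lambda\in\sigma_\varepsilon(\mathcal L)$ with $\Re\lambda>0$. Then $\lambda\notin\sigma(\mathcal L)$, and by Definition~\ref{def:pseudospectrum}
\begin{equation*}
\varepsilon^{-1}<\|(\lambda I-\mathcal L)^{-1}\|\leq \frac{M}{\Re\lambda}.
\end{equation*}
Hence $\Re\lambda<\varepsilon M$. Taking the supremum over all such $\lambda$, and combining with the trivial cases above, gives $\alpha_\varepsilon(\mathcal L)\leq\varepsilon M$, which is \eqref{eq:gearhart-pruss}.
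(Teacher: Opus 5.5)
Your proof is correct, and it is essentially the standard argument behind the cited result \cite[Theorem 15.4]{2005Trefethen}; the paper gives no proof of its own. That argument bounds the resolvent through the Laplace transform, $\|(zI-\mathcal L)^{-1}\|\le M/\Re z$ for $\Re z>0$ when $M=\sup_{t\ge 0}\|e^{t\mathcal L}\|<\infty$, and compares this with the defining inequality $\|(\lambda I-\mathcal L)^{-1}\|>\varepsilon^{-1}$ for $\lambda\in\sigma_\varepsilon(\mathcal L)$.
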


The magnetic advection-diffusion operator $\mathcal{L}_u$ from \eqref{eq:advectionDiffusionOperator}, when interpreted distributionally as an unbounded operator $\mathcal{L}_u:D(\mathcal{L}_u)\subset L^2\Lambda^k(\Omega)\to L^2\Lambda^k(\Omega)$ with $u$ divergence-free, is closed on the Hilbert space $L^2\Lambda^k(\Omega)$ and generates a $C_0$ semigroup, and \Cref{thm:gearhart-pruss} applies. In view of \eqref{eq:gearhart-pruss}, the question of Arnold and Khesin recalled in the quotation (\ref{arnold-khesin}) admits the following pseudospectral reformulation: a divergence-free velocity field $u$ is a \emph{fast kinematic dynamo} if there exists $\lambda_0>0$ such that
\begin{equation}\label{eq:fast-dynamo}
    \alpha_\varepsilon(\mathcal{L}_u)\;\geq\; \lambda_0
\end{equation}
for all sufficiently large $R_m$ and sufficiently small $\varepsilon>0$.

The pseudospectrum is also the right object for the description of the dynamo's \emph{transient phase}, in which nonlinear effects can become dominant before the asymptotic regime described by eigenvalues is reached. Indeed, for any $\tau>0$ and any $\lambda \in \mathbb{C}$ with $\Re(\lambda)>0$, define $K\coloneqq \Re(\lambda)\| (\mathcal{L}-\lambda I)^{-1}\|$. If $K>1$, one has \cite[Theorem 15.4, equation 15.11]{2005Trefethen}
\begin{equation}\label{eq:transient:bound}
	\sup_{0\leq t\leq \tau} \|e^{t\mathcal{L}}\|_{\mathcal{L}(X;X)} \;\geq\; e^{\Re(\lambda)\tau}%
	\left(1+\frac{e^{\Re(\lambda)\tau}-1}{K}\right)^{-1}.
\end{equation}
Estimate \eqref{eq:transient:bound} can produce non-trivial growth bounds even when $\alpha(\mathcal L)=0$, a phenomenon entirely invisible to the spectrum.

For the dynamo problem this distinction is more than a technicality: knowing \emph{whether} the magnetic field grows is only half the question, since for any practical purpose one must also know \emph{on which time scale} the growth takes place. Identity \eqref{eq:growth:bound} captures only the limiting rate as $t\to\infty$ and is silent on this latter question. Two operators with the same spectrum, and hence the same eigenvalue-predicted growth rate, may produce semigroups whose magnetic energy reaches a prescribed amplification on radically different time scales. The pseudospectral abscissa $\alpha_\varepsilon(\mathcal L)$, by contrast, encodes exactly this information through \eqref{eq:gearhart-pruss} and \eqref{eq:transient:bound}. The following example, reduced to its bare $2\times 2$ matrix essence, makes the discrepancy explicit.

\begin{example}[Eigenvalues miss the transient]\label{ex:transient}
The simplest illustration of the gap between $\alpha(\mathcal{L})$ and $\alpha_\varepsilon(\mathcal{L})$ is finite dimensional. Fix $\lambda>0$ and $N>0$ and consider on $X=\mathbb C^2$ the two operators
\begin{equation*}
    A \;=\; \begin{pmatrix} \lambda & N \\ 0 & \lambda \end{pmatrix}, \qquad B \;=\; \begin{pmatrix} \lambda & 0 \\ 0 & \lambda \end{pmatrix},
\end{equation*}
so that $\sigma(A)=\sigma(B)=\{\lambda\}$ and the spectral abscissa $\alpha(A)=\alpha(B)=\lambda$ is identical: according to \eqref{eq:growth:bound}, both semigroups should display the same asymptotic exponential factor $e^{\lambda t}$. A direct computation gives
\begin{equation*}
    e^{tA} \;=\; e^{\lambda t}\begin{pmatrix} 1 & Nt \\ 0 & 1 \end{pmatrix}, \qquad e^{tB} \;=\; e^{\lambda t}\,I,
\end{equation*}
so that $\|e^{tB}\|_{\mathcal L(\mathbb C^2,\mathbb C^2)} = e^{\lambda t}$ realizes the eigenvalue prediction exactly, while
\begin{equation*}
    \|e^{tA}\|_{\mathcal L(\mathbb C^2,\mathbb C^2)} \;=\; e^{\lambda t}\,\frac{\sqrt{(Nt)^2+4}+Nt}{2} \;\sim\; Nt\,e^{\lambda t} \qquad \text{as $Nt\to\infty$,}
\end{equation*}
so that the prefactor multiplying the common asymptotic factor $e^{\lambda t}$ can, on any finite time scale, be made arbitrarily large by increasing $N$. With $\lambda=10^{-1}$ and $N=10^{2}$ at $t=1$ already $\|e^{A}\|\approx 110$ while $\|e^{B}\|\approx 1.1$: the spectrum predicts the same asymptotic growth for both operators, but the semigroup generated by $A$ has already produced two orders of magnitude of amplification before $B$ has even doubled. The two semigroups do reach a common amplification, but on radically different time scales: on the dynamo-relevant time scale $B$ grows so slowly that the eigenvalue-driven exponential is essentially invisible, whereas $A$ produces order-of-magnitude effective growth essentially from $t=0$.

Direct computation of the resolvent yields the asymptotic relations
\begin{equation*}
    \alpha_\varepsilon(B) \;=\; \lambda + \varepsilon, \qquad \alpha_\varepsilon(A) \;=\; \lambda + \sqrt{N\varepsilon} + o(\sqrt{\varepsilon}) \qquad \text{as $\varepsilon\to 0^+$.}
\end{equation*}
For the values above and $\varepsilon=10^{-2}$ this gives $\alpha_\varepsilon(B)\approx 0.11$ but $\alpha_\varepsilon(A)\approx 1.1$, an order-of-magnitude difference that \Cref{thm:gearhart-pruss} and the transient bound \eqref{eq:transient:bound} translate into a non-trivial lower bound on $\sup_{0\leq t\leq \tau}\|e^{tA}\|_{\mathcal L(\mathbb C^2,\mathbb C^2)}$ vastly exceeding the eigenvalue prediction $e^{\lambda\tau}$, while leaving the slow growth of $\|e^{tB}\|$ untouched. The pseudospectral abscissa thus separates, on the dynamo-relevant transient time scale, the two cases that the spectral abscissa cannot.
\end{example}

\medskip

The previous bounds make the $\varepsilon$-pseudospectrum the natural quantity to compute, but they only become useful if pseudospectra can themselves be approximated reliably by finite element methods. The main result of this section, which we use repeatedly in Sections \ref{sec:convergence1} and \ref{sec:convergence2}, asserts that any norm-convergent sequence of finite-rank operators yields a convergent approximation of the pseudospectrum of a compact operator.

\begin{lemma}[Resolvent factorization]\label{lem:resolvent-factorization}
    Let $X$ be a complex Banach space, let $S,T\in\mathcal{L}(X;X)$, and let $\lambda\in\mathbb C$ be such that $(\lambda I-S)^{-1} \in \mathcal L(X;X)$. Then
    \begin{equation}\label{eq:resolvent-factorization}
        \lambda I - T \;=\; \bigl(I - (T-S)(\lambda I - S)^{-1}\bigr)\,(\lambda I - S).
    \end{equation}
\end{lemma}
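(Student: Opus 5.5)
The identity \eqref{eq:resolvent-factorization} is purely algebraic, so the plan is to expand the right-hand side and simplify it using only the defining property of the inverse, $(\lambda I - S)^{-1}(\lambda I - S) = I$ on $X$. No completeness or Hilbert structure is needed. The role of the hypothesis $(\lambda I - S)^{-1}\in\mathcal L(X;X)$ is simply that both sides are then well-defined bounded operators on $X$, so the identity holds in $\mathcal L(X;X)$.

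Concretely, distribute the right factor:
\begin{equation*}
\bigl(I - (T-S)(\lambda I - S)^{-1}\bigr)(\lambda I - S) = (\lambda I - S) - (T-S)(\lambda I - S)^{-1}(\lambda I - S).
\end{equation*}
Composition of bounded operators is associative and bilinear, so the last term equals $(T-S)$. The right-hand side therefore becomes $\lambda I - S - T + S = \lambda I - T$, which is the claim.

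There is no genuine obstacle here. The only point needing care is the order of the factors: we need the left-inverse property $(\lambda I - S)^{-1}(\lambda I - S)=I$, and this holds because the inverse is two-sided by assumption. I would also remark, for later use in this section, what the identity gives. Whenever $\|(T-S)(\lambda I-S)^{-1}\|<1$, the first factor is invertible by a Neumann series, so $\lambda\notin\sigma(T)$. Moreover,
\begin{equation*}
\|(\lambda I - T)^{-1}\| \le \frac{\|(\lambda I-S)^{-1}\|}{1-\|T-S\|\,\|(\lambda I-S)^{-1}\|}.
\end{equation*}
This is presumably the estimate the pseudospectral convergence argument will rely on.
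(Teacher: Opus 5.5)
Your proposal is correct and matches the paper's proof: distribute the right-hand side and use $(\lambda I-S)^{-1}(\lambda I-S)=I$ to obtain $(\lambda I-S)-(T-S)=\lambda I-T$. Your Neumann-series remark is also exactly how the lemma is used in the proof of Theorem~\ref{thm:pseudospectra:perturb}, although the displayed resolvent bound needs the stronger hypothesis $\|T-S\|\,\|(\lambda I-S)^{-1}\|<1$, not just $\|(T-S)(\lambda I-S)^{-1}\|<1$.
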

\begin{proof}
    Distributing the right-hand side of \eqref{eq:resolvent-factorization} and using $(\lambda I-S)^{-1}(\lambda I-S)=I$ gives $(\lambda I - S) - (T-S) = \lambda I - T$.
\end{proof}

\begin{theorem}[Quantitative perturbation of pseudospectra]\label{thm:pseudospectra:perturb}
    Let $X$ be a complex Hilbert space and let $S,T\in\mathcal{L}(X;X)$. Then, for every $\varepsilon>0$,
    \begin{equation}\label{eq:pseudospectra:perturb}
        \sigma_\varepsilon(T) \;\subseteq\; \sigma_{\varepsilon+\|T-S\|_{\mathcal L(X;X)}}(S) \qquad\text{and}\qquad \sigma_\varepsilon(S) \;\subseteq\; \sigma_{\varepsilon+\|T-S\|_{\mathcal L(X;X)}}(T).
    \end{equation}
\end{theorem}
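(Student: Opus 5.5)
By the symmetry of the statement under swapping $S$ and $T$, it suffices to prove the first inclusion $\sigma_\varepsilon(T)\subseteq\sigma_{\varepsilon+\delta}(S)$, where $\delta:=\|T-S\|_{\mathcal L(X;X)}$. I would argue by contraposition: take $\lambda\notin\sigma_{\varepsilon+\delta}(S)$ and show $\lambda\notin\sigma_\varepsilon(T)$. By Definition~\ref{def:pseudospectrum}, the hypothesis on $\lambda$ means $\lambda\notin\sigma(S)$ and
\[
\|(\lambda I-S)^{-1}\|\le (\varepsilon+\delta)^{-1}.
\]
The goal is then to show $\lambda\notin\sigma(T)$ and $\|(\lambda I-T)^{-1}\|\le\varepsilon^{-1}$.

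The tool is Lemma~\ref{lem:resolvent-factorization}, which applies because $(\lambda I-S)^{-1}\in\mathcal L(X;X)$:
\[
\lambda I-T=\bigl(I-K\bigr)(\lambda I-S),\qquad K:=(T-S)(\lambda I-S)^{-1}.
\]
Set $q:=\delta/(\varepsilon+\delta)$, so that $\|K\|\le q<1$ because $\varepsilon>0$. A Neumann series argument then shows $I-K$ is invertible with $\|(I-K)^{-1}\|\le (1-q)^{-1}$. Hence $\lambda I-T$ is a product of two invertible bounded operators, so it is bijective, $\lambda\notin\sigma(T)$, and
\[
(\lambda I-T)^{-1}=(\lambda I-S)^{-1}(I-K)^{-1}.
\]
Combining the two bounds gives
\[
\|(\lambda I-T)^{-1}\|\le \frac{(\varepsilon+\delta)^{-1}}{1-\dfrac{\delta}{\varepsilon+\delta}}=\frac{1}{\varepsilon}.
\]
This is the non-strict inequality, which is exactly what "not in $\sigma_\varepsilon(T)$" requires, since the definition uses the strict inequality $>\varepsilon^{-1}$.

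Two points need care rather than presenting a real obstacle. First, the convention that the resolvent norm is $+\infty$ on the spectrum must be handled: the contrapositive assumption already excludes $\lambda\in\sigma(S)$, and the Neumann argument excludes $\lambda\in\sigma(T)$, so both are covered. Second, the strict versus non-strict inequalities must match the definition; the contrapositive formulation makes the constants line up exactly, so nothing is lost. The case $\delta=0$ is trivial and is also covered by the same computation.
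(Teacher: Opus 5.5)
Your proposal is correct and follows essentially the same route as the paper: the resolvent factorization of Lemma~\ref{lem:resolvent-factorization}, a Neumann-series bound on $\bigl(I-(T-S)(\lambda I-S)^{-1}\bigr)^{-1}$, and the resulting estimate $\|(\lambda I-T)^{-1}\|\le\varepsilon^{-1}$. The only difference is cosmetic. You argue by contraposition, whereas the paper argues by contradiction after first splitting off the case $\lambda\in\sigma(S)$.
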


\begin{proof}
    By symmetry between $S$ and $T$, it suffices to establish the first inclusion. Set $\eta:=\|T-S\|_{\mathcal L(X;X)}$ and let $\lambda\in\sigma_\varepsilon(T)$. If $\lambda\in\sigma(S)$, then $\lambda\in\sigma_{\varepsilon+\eta}(S)$ by Definition~\ref{def:pseudospectrum}. Otherwise the resolvent $(\lambda I-S)^{-1}$ is bounded, and we suppose, for the sake of contradiction, that $\lambda\notin \sigma_{\varepsilon+\eta}(S)$, i.e., \ $\|(\lambda I-S)^{-1}\|_{\mathcal L(X;X)} \leq (\varepsilon+\eta)^{-1}$. Notice that
    \begin{equation*}
        \|(T-S)(\lambda I - S)^{-1}\|_{\mathcal L(X;X)} \;\leq\; \eta(\varepsilon+\eta)^{-1} \;<\; 1,
    \end{equation*}
    so that, by a Neumann-series argument,\footnote{Let $M\in\mathcal L(X;X)$ with $q:=\|M\|_{\mathcal L(X;X)} < 1$. The geometric series $\sum_{k\geq 0} M^k$ converges absolutely in $\mathcal L(X;X)$, since by submultiplicativity $\sum_{k\geq 0}\|M^k\|_{\mathcal L(X;X)} \leq \sum_{k\geq 0} q^k = (1-q)^{-1}$. Its sum is a two-sided inverse of $I-M$: the partial sums satisfy $(I-M)\sum_{k=0}^n M^k = I-M^{n+1}$, and since $\|M^{n+1}\|_{\mathcal L(X;X)}\leq q^{n+1}\to 0$ one obtains $(I-M)^{-1}=\sum_{k\geq 0} M^k$ with $\|(I-M)^{-1}\|_{\mathcal L(X;X)} \leq (1-q)^{-1}$.} the operator $I-(T-S)(\lambda I - S)^{-1}$ is invertible in $\mathcal L(X;X)$ with
    \begin{equation*}
        \|(I-(T-S)(\lambda I - S)^{-1})^{-1}\|_{\mathcal L(X;X)} \;\leq\; \frac{1}{1-\eta(\varepsilon+\eta)^{-1}} \;=\; \frac{\varepsilon+\eta}{\varepsilon}.
    \end{equation*}
    The resolvent factorization \eqref{eq:resolvent-factorization} of Lemma~\ref{lem:resolvent-factorization} then yields $(\lambda I-T)^{-1} = (\lambda I-S)^{-1}\bigl(I-(T-S)(\lambda I-S)^{-1}\bigr)^{-1}$, so that
    \begin{equation*}
        \|(\lambda I-T)^{-1}\|_{\mathcal L(X;X)} \;\leq\; \|(\lambda I-S)^{-1}\|_{\mathcal L(X;X)}\,\frac{\varepsilon+\eta}{\varepsilon} \;\leq\; \frac{1}{\varepsilon+\eta}\cdot\frac{\varepsilon+\eta}{\varepsilon} \;=\; \varepsilon^{-1},
    \end{equation*}
    which contradicts $\lambda\in\sigma_\varepsilon(T)$. Therefore $\lambda\in\sigma_{\varepsilon+\eta}(S)$ and the inclusion \eqref{eq:pseudospectra:perturb} holds.
\end{proof}

\Cref{thm:pseudospectra:perturb} is a quantitative, two-sided estimate obtained from Kato's first stability estimate adapted to the pseudospectral setting. For more details, see \cite[Theorem 52.4]{2005Trefethen}. Specializing $S$ to a discrete approximation $T_n$ of $T$ yields the qualitative convergence statement that we will use throughout.

\begin{corollary}[Approximation of pseudospectra]\label{cor:pseudospectra:approx}
    Let $T\in\mathcal L(X;X)$ and $\{T_n\}_{n\in\mathbb N}\subset \mathcal L(X;X)$ satisfy
    \begin{equation}\label{eq:uniform:conv}
        \lim_{n\to\infty}\|T_n-T\|_{\mathcal{L}(X;X)} = 0.
    \end{equation}
    Then, for every $0<\delta<\varepsilon$, there exists $N\in\mathbb N$ such that
    \begin{equation}\label{eq:pseudospectra:incl}
        \sigma_\delta(T_n) \;\subseteq\; \sigma_\varepsilon(T) \qquad\text{and}\qquad \sigma_\delta(T) \;\subseteq\; \sigma_\varepsilon(T_n) \qquad\forall\, n\geq N.
    \end{equation}
\end{corollary}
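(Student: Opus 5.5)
The plan is to apply \Cref{thm:pseudospectra:perturb} with $S=T_n$ and a perturbation size chosen so the enlarged pseudospectral level stays below $\varepsilon$. Fix $0<\delta<\varepsilon$ and set $\eta_n:=\|T_n-T\|_{\mathcal L(X;X)}$. By the uniform convergence assumption \eqref{eq:uniform:conv}, there is $N\in\mathbb N$ with $\eta_n\leq\varepsilon-\delta$ for all $n\geq N$; this is the only place the hypothesis enters.

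For $n\geq N$, apply \Cref{thm:pseudospectra:perturb} with the pair $(T,S)=(T_n,T)$ and level $\delta$. This gives
\begin{equation*}
\sigma_\delta(T_n)\subseteq\sigma_{\delta+\eta_n}(T)\qquad\text{and}\qquad\sigma_\delta(T)\subseteq\sigma_{\delta+\eta_n}(T_n).
\end{equation*}
It then remains to use the monotonicity of pseudospectra in the level: if $\varepsilon_1\leq\varepsilon_2$, then $\sigma_{\varepsilon_1}(A)\subseteq\sigma_{\varepsilon_2}(A)$. This is immediate from Definition~\ref{def:pseudospectrum}, since $\|(\lambda I-A)^{-1}\|>\varepsilon_1^{-1}\geq\varepsilon_2^{-1}$, and the convention $+\infty$ handles points of the spectrum. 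Since $\delta+\eta_n\leq\varepsilon$, both inclusions in \eqref{eq:pseudospectra:incl} follow.

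There is no real obstacle here; the only point needing care is that \Cref{thm:pseudospectra:perturb} is stated on a Hilbert space, which matches the setting of the corollary (with $X$ a Hilbert space as in the theorem). Note also that strict inequality $\eta_n<\varepsilon-\delta$ is not required, because the monotonicity argument tolerates equality of levels.
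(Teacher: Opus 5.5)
Your proposal is correct and follows the paper's proof: choose $N$ so that $\|T_n-T\|_{\mathcal L(X;X)}$ is at most $\varepsilon-\delta$, apply Theorem~\ref{thm:pseudospectra:perturb} at level $\delta$ (with the roles exchanged for the second inclusion), and then enlarge the level to $\varepsilon$. The differences are cosmetic: you allow $\leq$ instead of the paper's strict $<$, and you state explicitly the monotonicity of $\sigma_\varepsilon$ in $\varepsilon$, which the paper uses without comment.
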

\begin{proof}
    By \eqref{eq:uniform:conv} there exists $N\in\mathbb N$ such that $\|T_n-T\|_{\mathcal L(X;X)} < \varepsilon-\delta$ for every $n\geq N$. Applying \Cref{thm:pseudospectra:perturb} with $S=T_n$ and pseudospectral parameter $\delta$ yields
    \begin{equation*}
        \sigma_\delta(T_n) \;\subseteq\; \sigma_{\delta + \|T_n - T\|_{\mathcal L(X;X)}}(T) \;\subseteq\; \sigma_\varepsilon(T),
    \end{equation*}
    and the second inclusion follows analogously by exchanging the roles of $T$ and $T_n$.
\end{proof}

The verification of the uniform-convergence hypothesis \eqref{eq:uniform:conv} for the FEEC discretizations of the half and full Hodge--Laplace advection-diffusion eigenvalue problems is the main task of \Cref{sec:convergence1,sec:convergence2}. In particular, the convergence-rate estimates derived below also yield convergence-rate estimates for the pseudospectra via the above Corollary.

\begin{corollary}[Spectral convergence]\label{cor:spectral:conv}
    Let $T:X\to X$ be a compact linear operator and let $\{T_n\}_{n\in\mathbb N}\subset\mathcal L(X;X)$ satisfy \eqref{eq:uniform:conv}. Then, for every open set $U\subset\mathbb C$ containing $\sigma(T)$, there exists $N\in \mathbb N$ such that $\sigma(T_n)\subseteq U$ for every $n\geq N$.
\end{corollary}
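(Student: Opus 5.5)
The plan is a standard upper-semicontinuity argument for the spectrum. Compactness of $T$ is used only to say that $\sigma(T)$ is compact; in fact only the boundedness of $T$ matters. The key observation is that the resolvent norm of $T$ is uniformly bounded on the relevant region outside $U$, so a small perturbation cannot create spectrum there. We carry this out via Theorem~\ref{thm:pseudospectra:perturb}, or equivalently via the Neumann-series argument in its proof.

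\textbf{Step 1: reduce to a compact set.} Since $\sup_n\|T_n\|<\infty$ by \eqref{eq:uniform:conv}, set $R:=\sup_n\|T_n\|_{\mathcal L(X;X)}+\|T\|_{\mathcal L(X;X)}+1$. For $|\lambda|> R$ the Neumann series shows $\lambda\notin\sigma(T_n)$ for all $n$. So it suffices to consider $K:=\{\lambda\in\mathbb C:|\lambda|\leq R\}\setminus U$. This set is compact because $U$ is open, and it is disjoint from $\sigma(T)$.

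\textbf{Step 2: uniform resolvent bound on $K$.} The map $\lambda\mapsto\|(\lambda I-T)^{-1}\|_{\mathcal L(X;X)}$ is continuous on the resolvent set $\mathbb C\setminus\sigma(T)$. This follows from the resolvent identity, or again from Lemma~\ref{lem:resolvent-factorization} with $S=\lambda_0 I - \ldots$ replaced by a shift of $\lambda$. Hence it attains a finite maximum $M$ on $K$. Choose $\varepsilon:=M^{-1}/2$. Then no point of $K$ lies in $\sigma_{2\varepsilon}(T)$, because there $\|(\lambda I-T)^{-1}\|\leq M=(2\varepsilon)^{-1}$, which is not $>(2\varepsilon)^{-1}$.

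\textbf{Step 3: apply the perturbation result.} Pick $N$ with $\|T_n-T\|<\varepsilon$ for $n\geq N$. By Theorem~\ref{thm:pseudospectra:perturb} with $S=T$, for every $n\geq N$ we get
\[
\sigma(T_n)\subseteq\sigma_\varepsilon(T_n)\subseteq\sigma_{\varepsilon+\|T_n-T\|}(T)\subseteq\sigma_{2\varepsilon}(T).
\]
This set is disjoint from $K$. Combined with Step 1, $\sigma(T_n)\subseteq U$ for all $n\geq N$.

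\textbf{Main obstacle.} The only real point is Step 2: the continuity of the resolvent norm and the compactness of $K$, which together give a uniform bound. The convention $\|(\lambda I-\mathcal A)^{-1}\|=+\infty$ on the spectrum ensures the inclusion $\sigma(T_n)\subseteq\sigma_\varepsilon(T_n)$ used in Step 3.
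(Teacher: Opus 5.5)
Your proof is correct and follows the paper's route: both choose $\varepsilon>0$ so that the $\varepsilon$-pseudospectrum of $T$ avoids $\mathbb{C}\setminus U$ on the relevant region, then apply the perturbation inclusion of Theorem~\ref{thm:pseudospectra:perturb} (the paper does this through Corollary~\ref{cor:pseudospectra:approx}). Your Steps~1--2 (continuity of the resolvent norm on the compact set $K$, plus the uniform bound $\sup_n\|T_n\|$ for large $|\lambda|$) supply the justification for the existence of such an $\varepsilon$, which the paper compresses into a one-line appeal to compactness of $\sigma(T)$; your remark that boundedness of $T$ suffices is also right, but delete the garbled parenthetical about Lemma~\ref{lem:resolvent-factorization} in Step~2, since the resolvent identity alone gives the continuity.
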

\begin{proof}
	Since $T$ is compact, $\sigma(T)$ has at most $0$ as an accumulation point and is therefore a compact set. Because the set $\sigma(T)$ is compact and $U$ is an open neighborhood of $\sigma(T)$, there exists $\varepsilon>0$ such that $\sigma_\varepsilon(T)\subseteq U$. Applying Corollary~\ref{cor:pseudospectra:approx} with any $\delta\in(0,\varepsilon)$ yields $\sigma(T_n)\subseteq \sigma_\delta(T_n)\subseteq \sigma_\varepsilon(T)\subseteq U$ for all $n$ large enough, i.e., as $n\rightarrow \infty$.
\end{proof}

Corollary~\ref{cor:spectral:conv} recovers the classical Osborn-type spectral convergence result for non-self-adjoint compact eigenvalue problems~\cite{1991BabuskaOsborn}. In particular, the pseudospectral convergence of Corollary~\ref{cor:pseudospectra:approx} ensures that the discretizations analyzed in \Cref{sec:convergence1,sec:convergence2} produce neither spurious nor missing eigenmodes.

So far, we have established pseudospectral approximation results for the solution operator $T$. We now relate the pseudospectrum
of the original operator $-\mathcal L_u$ in
\eqref{eq:advectionDiffusionOperator} to that of the solution operator $T$ defined in \Cref{sec:convergence1}.

Set
\[
H:=L^2\Lambda^k(\Omega),\qquad
X:=\{v\in H\Lambda^{k}(\Omega): dv=0\},
\]
and let $P_X:H\to X$ be the $L^2$-orthogonal projection. Here and below, we use $\mathcal L_u$ to denote the unbounded operator on $X$
associated with the formulation in Section~\ref{sec:convergence1},
given by $\varepsilon d^{k-1}\delta^k+L_{\bm u}$ with domain
$D(\mathcal L_u)=T(H)\subset X$, using the opposite sign convention to \eqref{eq:advectionDiffusionOperator}. Then, by the definition of $T$ in \eqref{eq:gen:source},
\[
(\mathcal L_{ u}+\nu I)Tf= P_Xf,
\qquad \forall f\in H,
\]
for some $\nu >0$. For any $(\lambda,u)\in \mathbb C\times X \setminus\{0\}$ with $\lambda \neq-\nu$ is an eigenpair of $\mathcal{L}_u$ if and only if $((\lambda+\nu)^{-1},u)$ is an eigenpair of $T$. For any $\lambda\in\mathbb C, \lambda\neq -\nu$ such that $(\lambda I-\mathcal{L}_u)^{-1}\in \mathcal L(X;X)$, set $z = (\lambda +\nu)^{-1}$. Since $T$ is compact and $z\neq 0$, the eigenpair correspondence
above implies that $z\notin\sigma(T)$.
Hence, $(zI-T)^{-1}\in\mathcal L(H;H)$.

For any $f\in H$, set $u = (\lambda I-\mathcal{L}_u)^{-1}P_Xf\in D(\mathcal{L}_u)\subset X$. Then
\begin{align*}
    P_Xf &= (\lambda I-\mathcal{L}_u)u\\
    & = ((\lambda+\nu)I - (\mathcal{L}_u+\nu I))u.
\end{align*}
Applying \(T\) to both sides and using $u\in D(\mathcal{L}_u)$ and the definition of  $T$, we obtain
\begin{equation*}
    (\lambda+\nu)Tu - u = TP_Xf = Tf,
\end{equation*}
and hence
\begin{equation*}
    u = -z(z I-T)^{-1}Tf,\qquad (\lambda I-\mathcal{L}_u)^{-1}P_X = -z(z I-T)^{-1}T = zI -z^2(zI-T)^{-1},
\end{equation*}
where the left-hand side of the second equation is viewed as an operator on $H$. This implies that
\begin{equation}\label{eq:spec:auxi1}
    |z|^2\|(zI-T)^{-1}\|_{\mathcal{L}(H;H)} -|z|\leq \|(\lambda I-\mathcal{L}_u)^{-1}\|_{\mathcal{L}(X;X)}\leq |z|^2\|(zI-T)^{-1}\|_{\mathcal{L}(H;H)} +|z|.
\end{equation}

For any $0<\eta <|\lambda+\nu|$ and $$\lambda\in \sigma_{\eta}^X(\mathcal{L}_u): = \left\{\lambda\in \mathbb C : \|(\lambda I - \mathcal{L}_u)^{-1}\|_{\mathcal{L}(X;X)} > \eta^{-1}\right\},$$ it follows from \eqref{eq:spec:auxi1} that
\begin{equation*}
    z=(\lambda +\nu)^{-1}\in \sigma_{\tau_+}^H(T) :=\left\{\lambda\in \mathbb C : \|(\lambda I - T)^{-1}\|_{\mathcal{L}(H;H)} > \tau_+^{-1}\right\},
\end{equation*}
\begin{equation*}
    \tau_+ = \frac{\eta|z|^2}{1-\eta|z|} = \frac{\eta}{|\lambda+\nu|(|\lambda+\nu|-\eta)}.
\end{equation*}
Similarly, if
\begin{equation*}
    (\lambda+\nu)^{-1}\in \sigma_{\tau_-}^H(T),\quad \tau_- = \frac{\eta|z|^2}{1+\eta|z|} = \frac{\eta}{|\lambda+\nu|(|\lambda+\nu|+\eta)},\quad \eta>0,
\end{equation*}
then it follows from \eqref{eq:spec:auxi1} that $\lambda\in \sigma_{\eta}^X(\mathcal{L}_u)$. Thus, under the transformation $z=(\lambda+\nu)^{-1}$, the pseudospectrum of $\mathcal L_u$ is controlled from both sides by the pseudospectra of the compact solution operator $T$.

\section{Generalized advection-diffusion eigenvalue problem} \label{sec:convergence1}

\subsection{Formulations and finite element schemes}

Consider the advection-diffusion equation of $k$-forms in the space-time domain $\Omega\times [0,T]$ (cf. \cite{heumann2012fully,2016Pagliantini}): Find a time-dependent differential $k$-form $v(t)$ such that
\begin{align*}
    \partial_t v(t) + \varepsilon d^{k-1}\delta^{k} v(t) + L_uv(t) &= {f}(t),\\
    d^{k}v(t)&=0,
\end{align*}
where $u:\bar{\Omega}\times[0,T]\rightarrow \mathbb R^n$ is a given  velocity field, $f(t)\in \Lambda^{k}_{\mathbb C}(\Omega)$ a source term, and the scalar diffusivity parameter $\varepsilon$ is a non-negative constant.

With $n=3$ and $k=2$, this becomes the magnetic advection-diffusion equation \eqref{eq:MHD} with the divergence constraint $\nabla\cdot \bm B=0$.

Assume that $\varepsilon$ is a positive constant and $u=u(x)$ is time-independent, we now introduce the generalized advection-diffusion eigenvalue problem: Find a $k$-form eigenvector $v\ne0$ and an eigenvalue $\lambda\in \mathbb C$ such that
\begin{equation}\label{k-form-eigenvalue}
\begin{cases}
    \varepsilon d^{k-1}\delta^k v + L_u  v  = \lambda  v,\\
    d^k v=0.
    \end{cases}
\end{equation}
 For a nonzero eigenvalue $\lambda$, this problem is equivalent to
\begin{equation}\label{eq:gen:original}
    \varepsilon d^{k-1}\delta^k v + d^{k-1}\mathrm{i}_u^k v  = \lambda  v.
\end{equation}
From here on, we also write $\mathrm{i}_u$ for $\mathrm{i}_u^k$. Applying $d^{k}$ to \eqref{eq:gen:original} and using the fact that $d^{k}\circ d^{k-1}=0$, we have $d^{k}v=0$. The equivalence follows.

The mixed formulation of \eqref{k-form-eigenvalue} is as follows: Find $ v\in \mathring{H}^*\Lambda^k(\Omega)$ with $v\neq 0$ and $\lambda \in \mathbb C$ such that
\begin{equation}\label{eq:gen:mixed}
    \begin{aligned}
    \varepsilon(\delta^k v,\delta^k w) + (\mathrm{i}_u^k v,\delta^k w) + (\delta^{k+1} p, w) & =\lambda( v, w),\quad \forall  w\in  \mathring{H}^*\Lambda^k(\Omega),\\
    ( v,\delta ^{k+1} q)&=0,\quad \forall  q\in \mathring{H}^*\Lambda^{k+1}(\Omega),
\end{aligned}
\end{equation}
for some $p\in \mathring{H}^*\Lambda^{k+1}(\Omega)$. The discrete formulation of \eqref{eq:gen:mixed} is obtained by replacing the spaces with finite element spaces in a complex: Find $ v_h\in \Lambda_h^k$ with $v_h\neq 0$ and $\lambda_h\in\mathbb C$ such that
\begin{equation}\label{eq:gen:discrete_mixed}
    \begin{aligned}
    \varepsilon(\delta^k v_h,\delta^k w_h) + (\mathrm{i}_u^k v_h,\delta^k w_h) + (\delta^{k+1} p_h, w_h) & =\lambda_h( v_h, w_h),\quad \forall  w_h\in \Lambda_h^k,\\
    ( v_h,\delta ^{k+1} q_h)&=0,\quad \forall  q_h\in \Lambda_h^{k+1} ,
    \end{aligned}
\end{equation}
for some $p_h\in\Lambda_h^{k+1}$.

In the subsequent sections, we prove the convergence of the finite element problem \eqref{eq:gen:discrete_mixed}. The generalized advection-diffusion eigenvalue problem is non-symmetric. Therefore, our analysis is based on the use of Babu\v{s}ka–Osborn theory \cite{1991BabuskaOsborn}.

\subsection{Approximation and convergence}

In the remainder of this section, we analyze the convergence properties of the finite element scheme introduced above.
The analysis draws on techniques from finite element eigenvalue problems for the Maxwell equations~\cite{2010Boffi}.
However, the presence of the non-self-adjoint advection term and the possible competition between advection and diffusion
(as shown in the well-known difficulties of advection-dominated regimes) cause the analysis to deviate from the classical Maxwell case.

In this section, we assume that~$u \in L^{\infty}(\Omega)$.
Notably, we do not require the diffusion coefficient~$\varepsilon$ to be sufficiently large
(as is commonly assumed in source-type problems).
This flexibility arises because, in eigenvalue problems, one can always introduce an arbitrary shift of the form~$\nu v$.
Nevertheless, the constants appearing in the estimates below may still depend on~$\varepsilon$.

Moreover, we allow the domain $\Omega$ to have non-trivial topology and extend the analysis to include harmonic forms, which are often omitted in the literature for the Maxwell eigenvalue problems.

Define the kernel
\begin{equation*}
    \mathbb K^k:=\{ v\in \mathring{H}^*\Lambda^{k}(\Omega):( v,\delta^{k+1} p)=0,\forall  p\in\mathring{H}^*\Lambda^{k+1}(\Omega) \}
\end{equation*}
and the discrete kernel
\begin{equation}\label{def:Kh}
    \mathbb K_h^k:=\{ v_h\in\Lambda_h^k:( v_h,\delta^{k+1} p_h)=0,\forall  p_h\in \Lambda_h^{k+1}\}.
\end{equation}
Then the discrete mixed formulation \eqref{eq:gen:discrete_mixed} is equivalent to: Find $ v_h\in \mathbb K^k_h$ such that
\begin{equation*}
    \varepsilon(\delta^k v_h, \delta^k w_h) + (\mathrm{i}_u^k v_h,\delta^{k} w_h) = \lambda_h ( v_h, w_h),\quad \forall  w_h\in \mathbb K^k_h.
\end{equation*}

As in the mixed form of the Maxwell eigenvalue problem \cite{2007Boffi,2010Boffi}, we establish the strong discrete compactness of the discrete kernel, as well as the weak and strong approximation properties. The main difference from the Maxwell case is due to the influence of the advection term on the regularity of the source problem, which further affects the compactness argument.

The following result is a generalization of the Maxwell result, incorporating non-trivial cohomology (harmonic forms).
\begin{lemma}[Strong discrete compactness]\label{lem:str_dis_com}
     For any sequence $\{ u_{h_n}\}$ with $ u_{h_n}\in \mathbb K_{h_n}^k$, $h_n\rightarrow 0$, which is uniformly bounded in $H^*\Lambda^k$, there exists a subsequence of $\{ u_{h_n}\}$ converging strongly in $L^2\Lambda^k(\Omega)$ to a limit $ u$ with $( u,\delta^{k+1} q)=0$ for all $ q\in \mathring{H}^*\Lambda^{k+1}(\Omega)$.
\end{lemma}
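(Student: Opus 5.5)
We follow the classical argument for the Maxwell eigenvalue problem (as in \cite{2010Boffi}), working throughout with the dual complex \eqref{eq:deRham_dual} and its subcomplex \eqref{discrete-delta-complex}. The idea is to compare each discrete kernel element $u_h\in\mathbb K_h^k$ with a continuous companion $w_h$ that has the same coderivative and is orthogonal to $\delta$-exact forms. The regularity assumption \eqref{eq:s-regular} gives $H^s$ bounds on $w_h$ with $s>1/2$, hence compactness. The bounded cochain projections $\pi_h^k$ then show that $u_h-w_h\to 0$ in $L^2$. The new ingredient compared with the Maxwell case is the harmonic part: we split off $\mathcal H_h^k$ using Lemma~\ref{lem:discrete_har:appro}.

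\emph{Step 1: Hodge decomposition of the discrete kernel.} Since \eqref{discrete-delta-complex} is a subcomplex, each $u_h\in\mathbb K_h^k$ decomposes $L^2$-orthogonally as $u_h=z_h+q_h$. Here $q_h\in\mathcal H_h^k$, and $z_h\in\mathbb K_h^k$ is orthogonal to $\ker\delta^k\cap\Lambda_h^k$. Both parts are bounded in $H^*\Lambda^k$, and $\delta^k z_h=\delta^k u_h$.

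For the harmonic part, $\mathcal H_h^k$ has dimension equal to the $k$-th Betti number, which is fixed. Lemma~\ref{lem:discrete_har:appro} provides $r_h\in\mathcal H^k$ with $\|r_h\|\le\|q_h\|$ and $\|q_h-r_h\|\le Ch^s\|q_h\|$. The forms $r_h$ lie in the bounded subset of a finite-dimensional space, so a subsequence converges in $L^2$, and $q_h$ converges to the same limit.

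\emph{Step 2: continuous companion of $z_h$.} Let $w_h\in\mathring H^*\Lambda^k$ be the unique form that is orthogonal to $\mathcal H^k$ and to $\ker\delta^k$ and satisfies $\delta^k w_h=\delta^k z_h$. Equivalently, $w_h$ is the continuous Hodge projection of $z_h$ onto the complement of $\ker\delta^k$. By construction $w_h\in\mathbb K^k$. Being orthogonal to $\ker\delta^k\supset\operatorname{ran}\delta^{k+1}$, it has $dw_h=0$ in the distributional sense, so $w_h\in H\Lambda^k\cap\mathring H^*\Lambda^k$. The regularity estimate \eqref{eq:s-regular} then gives
\[
\|w_h\|_{H^s}\le C\|\delta^k z_h\|\le C,
\]
with $s>1/2$. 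By Rellich's theorem a subsequence of $w_h$ converges strongly in $L^2$, and the limit $w$ lies in $\mathbb K^k$ because $\mathbb K^k$ is closed.

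\emph{Step 3 (main obstacle): $\|z_h-w_h\|_{L^2}\to 0$.} Since $w_h\in H^s$ with $s>1/2$ and $\delta^k w_h$ is discrete, $\pi_h^k w_h$ is well defined. The commuting property gives
\[
\delta^k\pi_h^k w_h=\pi_h^{k-1}\delta^k w_h=\delta^k z_h,
\]
so $z_h-\pi_h^k w_h\in\ker\delta^k\cap\Lambda_h^k$. Now $z_h$ is orthogonal to this space, while $w_h$ is orthogonal to all of $\ker\delta^k$. Hence
\[
\|z_h-w_h\|^2=(z_h-w_h,\,z_h-\pi_h^k w_h)+(z_h-w_h,\,\pi_h^k w_h-w_h)=(z_h-w_h,\,\pi_h^k w_h-w_h),
\]
because $z_h-\pi_h^k w_h$ is orthogonal to both $z_h$ and $w_h$. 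Cauchy--Schwarz and Assumption~\ref{assump:interpolation}, applied through the Hodge star conjugation, then yield
\[
\|z_h-w_h\|\le\|(I-\pi_h^k)w_h\|\le Ch^s\|w_h\|_{H^s}\le Ch^s.
\]

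The delicate point in this step is justifying that the Hodge-star-conjugated projection applies to $w_h$ with the $H^s$ rate. This requires $\star w_h\in\mathring H\Lambda^{n-k}\cap H^s$, which holds from the trace condition in $\mathring H^*\Lambda^k$ together with Step 2. The harmonic component must also be handled consistently with the definition of $w_h$, which is why it was removed in Step 1.

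\emph{Conclusion.} Combining Steps 1--3, a subsequence of $u_{h_n}=z_{h_n}+q_{h_n}$ converges in $L^2$ to $w+r$. Here $w\in\mathbb K^k$ and $r\in\mathcal H^k\subset\mathbb K^k$. Therefore the limit satisfies $(w+r,\delta^{k+1}q)=0$ for all $q\in\mathring H^*\Lambda^{k+1}(\Omega)$, as claimed.
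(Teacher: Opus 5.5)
Your proposal is correct and follows the paper's proof. You split off the discrete harmonic part via Lemma~\ref{lem:discrete_har:appro}, replace the remainder by the continuous form in $(\ker\delta^k)^\perp$ with the same coderivative (the paper's $u(n)$), use $s$-regularity for compactness, and obtain $\|z_h-w_h\|_{L^2}\le\|(I-\pi_h^k)w_h\|_{L^2}\le Ch^s$ from the commuting projection. The only difference is that you derive the key orthogonality $(z_h-w_h,\,z_h-\pi_h^k w_h)=0$ directly from $z_h\perp\ker\delta^k\cap\Lambda_h^k$ and $w_h\perp\ker\delta^k$, which is slightly cleaner than the paper's auxiliary representation through $\psi_{h_n},\psi(n)$ and gives the same estimate.
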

\begin{proof}
    There exists an $L^2$-orthogonal decomposition ${ u}_{h_n} = \tilde{ u}_{h_n}  +  p_{h_n}$ with $ p_{h_n}\in \H_{h_n}^k,\tilde{ u}_{h_n}\in \mathbb K_{h_n}^k\cap (\H_{h_n}^k)^{\perp}$ and $(\tilde{ u}_{h_n},q)= 0$ for any $q\in \H_{h_n}^k$. From Lemma \ref{lem:discrete_har:appro}, there exists $ q_{n}\in \H^k$ such that $\| q_{n}\|_{L^2}\leq \| p_{h_n}\|_{L^2}$ and $\| p_{h_n}- q_{n}\|_{L^2}\leq Ch^s_{h_n}\| p_{h_n}\|_{L^2}$. Hence, $\{ q_{n}\}$ is uniformly bounded. Since $\H^k$ is a finite-dimensional Hilbert space, we can assume that there exists a subsequence (still denoted by $\{ q_{n}\}$) such that the corresponding sequence $ q_{n}\rightarrow  q\in \H^k$ in $L^2$. It also holds that $ p_{h_n}\rightarrow  q\in \H^k$ in $L^2$.

On the orthogonal complement of (discrete) harmonic forms, the rest of the analysis follows from that of the Maxwell equation.

Define $ u(n)\in  \mathring{H}^*\Lambda^{k}(\Omega)$, $u(n)\perp \H^k$  via a Hodge mapping \cite{2002Hiptmair} by requiring
$$\delta^k  u(n)= \delta^k \tilde{ u}_{h_n}, \quad ( u(n),\delta^{k+1} p) =0, \quad \forall p\in \mathring{H}^*\Lambda^{k+1}(\Omega).$$
We have $du(n)=0$ and $d_{h}\tilde{ u}_{h_n}=0$, where $d_{h}$ is the $L^{2}$ adjoint of $\delta$. Therefore there exist $\psi_{h_n}\in \delta^k\Lambda_{h_n}^k$ and $\psi (n)\in \delta^{k} \mathring{H}^*\Lambda^{k}(\Omega)$ such that
\begin{align*}
    (\tilde{ u}_{h_n}-  u(n), \sigma) = (\psi_{h_n}-\psi(n),\delta^{k} \sigma),\quad \forall  \sigma\in \Lambda_{h_n}^k.
\end{align*}
Choosing $ \sigma = \pi_{h_n}^k u(n) - \tilde{ u}_{h_n}$,  we have
\begin{align*}
    (\tilde{ u}_{h_n}-  u(n),\pi_{h_n}^k u(n) - \tilde{ u}_{h_n}) &= (\psi_{h_n}-\psi(n),\delta^{k}(\pi_{h_n}^k u(n) - \tilde{ u}_{h_n}))\\
    & = (\psi_{h_n}-\psi(n),\pi_{h_n}^{k-1}\delta^k u(n)- \delta^{k}\tilde{ u}_{h_n}) = 0.
    \end{align*}
Consequently,
    \begin{equation*}
        \| u(n) - \tilde{ u}_{h_n}\|_{L^2}\leq \| u(n) -\pi_{h_n}^k u(n)\|_{L^2}\leq Ch_{n}^s\| u(n)\|_{H^s}\leq Ch_{n}^s\|\delta^k\tilde{ u}_{h_n}\|_{L^2}.
    \end{equation*}
    Here we used that $d^k u(n)=0$ and $(u(n),q) = 0$, $\forall q\in \H^k$, and \eqref{eq:s-regular}. Recall that $\{ u(n)\}$ is a uniformly bounded sequence in $\mathbb K^k$. Since $\mathbb K^k$ is compactly embedded in $L^2\Lambda^k(\Omega)$, there exists a subsequence (still denoted by $\{ u(n)\}$) converging strongly to some limit $ u$ in $L^2$ with $( u,\delta^{k+1} w)= 0$ for all $ w\in  \mathring{H}^*\Lambda^{k+1}(\Omega)$. Hence the discrete subsequence $\{ u_{h_n}\}$  converges to  $ u + q$ in $L^2$, which completes the proof.
\end{proof}
\begin{lemma}[Weak approximation]
    There exists $\omega_1(h)$, tending to zero as $h$ goes to zero, such that for every $ q\in \mathring{H}^*\Lambda^{k+1}(\Omega)$
    \begin{equation*}
        \sup_{ v_h\in \mathbb K_h^k}\frac{|( v_h, \delta^{k+1} q)|}{\| v_h\|_{H^*\Lambda}} \leq \omega_1(h)\|\delta^{k+1} q\|_{L^2}.
    \end{equation*}
\end{lemma}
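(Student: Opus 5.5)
The key idea is that $\delta^{k+1}q$ is already orthogonal to the continuous kernel $\mathbb K^k$, so the pairing $(v_h,\delta^{k+1}q)$ only sees how far $v_h$ lies from $\mathbb K^k$. That distance is controlled by the estimate in the proof of Lemma~\ref{lem:str_dis_com}, together with Lemma~\ref{lem:discrete_har:appro} for the harmonic part.

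\textbf{Step 1: decompose $v_h$.} Fix $v_h\in\mathbb K_h^k$ and split it $L^2$-orthogonally as in the proof of Lemma~\ref{lem:str_dis_com}:
\[
v_h=\tilde v_h+p_h,\qquad p_h\in\H_h^k,\quad \tilde v_h\in\mathbb K_h^k\cap(\H_h^k)^\perp .
\]

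\textbf{Step 2: the harmonic part.} Lemma~\ref{lem:discrete_har:appro} gives $r\in\H^k$ with $\|p_h-r\|_{L^2}\le Ch^s\|p_h\|_{L^2}$. Since $r\in\H^k\subset\mathbb K^k$, we have $(r,\delta^{k+1}q)=0$. Hence
\[
|(p_h,\delta^{k+1}q)|=|(p_h-r,\delta^{k+1}q)|\le Ch^s\|v_h\|_{L^2}\|\delta^{k+1}q\|_{L^2},
\]
using $\|p_h\|_{L^2}\le\|v_h\|_{L^2}$.

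\textbf{Step 3: the non-harmonic part.} Introduce the Hodge-mapping lift $u\in\mathbb K^k$ with $u\perp\H^k$, $\delta^k u=\delta^k\tilde v_h$ and $(u,\delta^{k+1}p)=0$ for all $p\in\mathring H^*\Lambda^{k+1}(\Omega)$. The computation in the proof of Lemma~\ref{lem:str_dis_com} applies verbatim. It shows that $\tilde v_h-u$ is $L^2$-orthogonal to $\pi_h^k u-\tilde v_h$. This uses the commuting property $\delta^k\pi_h^k=\pi_h^{k-1}\delta^k$, the fact that $\tilde v_h-u$ is orthogonal to discrete closed forms up to a $\delta^k$ term, and $\delta^k u=\delta^k\tilde v_h$. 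Consequently
\[
\|u-\tilde v_h\|_{L^2}\le\|(I-\pi_h^k)u\|_{L^2}\le Ch^s\|u\|_{H^s}\le Ch^s\|\delta^k\tilde v_h\|_{L^2},
\]
where the last step uses the $s$-regularity \eqref{eq:s-regular} together with $d^k u=0$ and $u\perp\H^k$. Since $\delta^k p_h=0$, we have $\delta^k\tilde v_h=\delta^k v_h$. As $u\in\mathbb K^k$,
\[
|(\tilde v_h,\delta^{k+1}q)|=|(\tilde v_h-u,\delta^{k+1}q)|\le Ch^s\|\delta^k v_h\|_{L^2}\|\delta^{k+1}q\|_{L^2}.
\]

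\textbf{Step 4: conclusion.} Adding the two bounds gives the claim with $\omega_1(h)=Ch^s$, since $\|v_h\|_{L^2}+\|\delta^k v_h\|_{L^2}\le\sqrt2\|v_h\|_{H^*\Lambda}$.

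\textbf{Main obstacle.} Most of the work is checking that the Lemma~\ref{lem:str_dis_com} estimate holds uniformly for a single $v_h$, not just along a sequence. In particular, the orthogonality identity needs $\pi_h^k u$ to be well defined, i.e.\ $u$ must lie in the domain of the bounded cochain projection; this holds because $u\in\mathring H^*\Lambda^k(\Omega)$. If Assumption~\ref{assump:interpolation} needs $H^s$ regularity for the rate, that is supplied by \eqref{eq:s-regular}. The harmonic part is routine once Lemma~\ref{lem:discrete_har:appro} is available.
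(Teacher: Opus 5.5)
Your argument is correct, but it takes a different route from the paper. The paper gives no self-contained proof; it defers to \cite[Proposition 5]{2007Boffi}, where weak approximability is deduced from the strong discrete compactness property (here Lemma~\ref{lem:str_dis_com}) by contradiction. If the bound failed, there would be $\eta>0$, $h_n\to0$, $v_n\in\mathbb K_{h_n}^k$ with $\|v_n\|_{H^*\Lambda}=1$, and $q_n$ with $\|\delta^{k+1}q_n\|_{L^2}=1$ and $|(v_n,\delta^{k+1}q_n)|\ge\eta$. Lemma~\ref{lem:str_dis_com} then gives a subsequence $v_n\to v$ in $L^2$ with $v\in\mathbb K^k$, so $|(v_n,\delta^{k+1}q_n)|=|(v_n-v,\delta^{k+1}q_n)|\le\|v_n-v\|_{L^2}\to0$, a contradiction.

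You instead open up the proof of Lemma~\ref{lem:str_dis_com} and apply its two ingredients directly to a single $v_h$:
\begin{itemize}
\item Lemma~\ref{lem:discrete_har:appro} for the discrete harmonic part;
\item the Hodge-mapping estimate $\|u-\tilde v_h\|_{L^2}\le\|(I-\pi_h^k)u\|_{L^2}\le Ch^s\|\delta^k v_h\|_{L^2}$ for the part in $\mathbb K_h^k\cap(\mathcal{H}_h^k)^\perp$.
\end{itemize}
This is legitimate because both estimates hold for each fixed $h$ with $h$-independent constants. Your route bypasses the compact embedding of $\mathbb K^k$ into $L^2$ and yields the explicit rate $\omega_1(h)=Ch^s$, which is a quantitative gap bound between $\mathbb K_h^k$ and $\mathbb K^k$ that the paper does not state.

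The contradiction route gives only $\omega_1(h)\to0$. However, once Lemma~\ref{lem:str_dis_com} is available it takes three lines and uses only the qualitative discrete compactness property. It therefore applies whenever compactness is known, even without an interpolation rate or $s$-regularity. In the present paper both routes ultimately rest on the same assumptions: $s$-regularity, Assumption~\ref{assump:interpolation}, and Lemma~\ref{lem:discrete_har:appro}.
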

\begin{proof}
The proof follows a similar argument as in \cite[Proposition 5]{2007Boffi}.
\end{proof}

\begin{lemma}\label{lem:coercivity}
 Assume that $u\in  L^{\infty}(\Omega)$.    There exist positive constants $\nu\geq \frac{C\|u\|_{L^{\infty}}^2}{2\varepsilon} + 1>0$ and $\alpha = \min\{\frac{\varepsilon}{4},\frac{1}{2}\}>0$ such that for any $v\in \Lambda^{k}_{\mathbb C}(\Omega)$, $w\in \Lambda^{k-1}_{\mathbb C}(\Omega)$ and $0\leq k\leq n$, we have
\begin{equation}\label{eq:shift_coe}
    |\varepsilon( w, w) + (\mathrm{i}_u^k v,w) + \nu ( v, v)|\geq \alpha (\| v\|_{L^2} + \|w\|_{L^2})^2.
\end{equation}
\end{lemma}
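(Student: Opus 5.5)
The plan is to bound the modulus from below by the real part and then absorb the advection cross term with Young's inequality. Since $\varepsilon(w,w)=\varepsilon\|w\|_{L^2}^2$ and $\nu(v,v)=\nu\|v\|_{L^2}^2$ are real and nonnegative, we have
\begin{equation*}
|\varepsilon( w, w) + (\mathrm{i}_u^k v,w) + \nu ( v, v)| \geq \varepsilon\|w\|_{L^2}^2 + \nu\|v\|_{L^2}^2 + \Re(\mathrm{i}_u^k v,w) \geq \varepsilon\|w\|_{L^2}^2 + \nu\|v\|_{L^2}^2 - |(\mathrm{i}_u^k v,w)|.
\end{equation*}
So the whole argument reduces to controlling the single indefinite term $(\mathrm{i}_u^k v,w)$.

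The first step is a pointwise bound on the interior product. Contraction with $u$ is an algebraic operation whose coefficients are linear in the components of $u$. Hence, at each point, $|\mathrm{i}_u^k v|\leq C_0|u||v|$ with $C_0$ depending only on $n$ and $k$. Integrating and applying Cauchy--Schwarz gives
\begin{equation*}
|(\mathrm{i}_u^k v,w)|\leq C_0\|u\|_{L^\infty}\|v\|_{L^2}\|w\|_{L^2}.
\end{equation*}
Only $u\in L^\infty(\Omega)$ is needed here, and no derivative of $u$ enters.

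The second step is Young's inequality:
\begin{equation*}
C_0\|u\|_{L^\infty}\|v\|_{L^2}\|w\|_{L^2}\leq \frac{\varepsilon}{2}\|w\|_{L^2}^2 + \frac{C_0^2\|u\|_{L^\infty}^2}{2\varepsilon}\|v\|_{L^2}^2 .
\end{equation*}
Setting $C:=C_0^2$ and taking $\nu\geq \frac{C\|u\|_{L^\infty}^2}{2\varepsilon}+1$, the lower bound becomes $\frac{\varepsilon}{2}\|w\|_{L^2}^2+\|v\|_{L^2}^2$. Using $(a+b)^2\leq 2a^2+2b^2$, this is at least $\min\{\frac{\varepsilon}{4},\frac12\}(\|v\|_{L^2}+\|w\|_{L^2})^2$, which is \eqref{eq:shift_coe} with $\alpha=\min\{\frac{\varepsilon}{4},\frac{1}{2}\}$.

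There is no real obstacle. The only point needing care is making the constant $C$ in the pointwise contraction bound explicit enough to state the threshold for $\nu$. Since the lemma leaves $C$ generic, the bound $|\mathrm{i}_u v|\leq C_0|u||v|$, with $C_0$ depending only on $n$ and $k$, suffices.
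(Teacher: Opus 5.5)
Your proof is correct and matches the paper's argument: bound $|(\mathrm{i}_u^k v,w)|\leq C^{1/2}\|u\|_{L^\infty}\|v\|_{L^2}\|w\|_{L^2}$ (your $C_0=C^{1/2}$), absorb it by Young's inequality using the threshold on $\nu$ to reach $\frac{\varepsilon}{2}\|w\|_{L^2}^2+\|v\|_{L^2}^2$, and conclude with $(a+b)^2\leq 2a^2+2b^2$. The only cosmetic difference is that you pass through the real part, whereas the paper uses the reverse triangle inequality $|\varepsilon(w,w)+\nu(v,v)|-|(\mathrm{i}_u^k v,w)|$; both give the same bound.
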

\begin{proof}
There exists a positive constant $C>0$ such that
    \begin{align*}
        |\varepsilon( w, w) + (\mathrm{i}_u^k v,w) + \nu ( v, v)|& \geq |\varepsilon( w, w) + \nu ( v, v)| - |(\mathrm{i}_u^k v,w)|\\
        &\geq \varepsilon\|w\|_{L^2}^2+\nu\|v\|_{L^2}^2-C^{\frac{1}{2}}\|u\|_{L^{\infty}}\|v\|_{L^2}\|w\|_{L^2}.
    \end{align*}
Choosing $\nu \geq \frac{C\|u\|_{L^{\infty}}^2}{2\varepsilon} + 1$, we have
    \begin{align*}
        |\varepsilon( w, w) + (\mathrm{i}_u^k v,w) + \nu ( v, v)|\geq \frac{\varepsilon}{2}\|w\|_{L^2}^2 + \|v\|_{L^2}^2\geq \alpha(\| v\|_{L^2} + \|w\|_{L^2})^2.
    \end{align*}
\end{proof}
Define a sesquilinear form $a:\mathring{H}^*\Lambda^{k}(\Omega)\times \mathring{H}^*\Lambda^{k}(\Omega)\rightarrow \mathbb C$ by
\begin{equation}\label{def:a}
    a( v, w) = \varepsilon(\delta^k v,\delta^k w) + (\mathrm{i}_u^k v,\delta^k w) + \nu ( v, w).
\end{equation}
Note that
\begin{equation*}
    |a(v,w)|\leq C\|v\|_{H^*\Lambda}\|w\|_{H^*\Lambda},\quad \forall v,w\in \mathring{H}^*\Lambda^{k}(\Omega),
\end{equation*}
and
\begin{equation*}
    |a(v,v)| = |\varepsilon( \delta^kv, \delta^kv) + (\mathrm{i}_u^k v,\delta^k v) + \nu ( v, v)|\geq\alpha\|v\|_{H^*\Lambda}^2,\quad \forall v\in \mathring{H}^*\Lambda^{k}(\Omega).
\end{equation*}
Then we can define the solution operator of the source problem $T: L^2\Lambda^k(\Omega)\rightarrow \mathbb K^k$ by
\begin{equation}\label{eq:gen:source}
    a(T f, w) = ( f, w),\quad \forall  w\in \mathbb{K}^k.
\end{equation}
By the Lax-Milgram theorem, the operator $T$ is well-defined.
Since the embedding $\mathbb K^k \subset\mathring{H}^*\Lambda^{k}(\Omega)\cap H\Lambda^k(\Omega) \hookrightarrow L^2\Lambda^k(\Omega)$ is compact, $T: L^2\Lambda^k(\Omega) \rightarrow L^2\Lambda^k(\Omega)$ is a compact operator. Similarly,   define $T_h: L^2\Lambda^k(\Omega)\rightarrow \mathbb K_h^k$ by
\begin{equation}\label{eq:gen:discrete_source}
    a(T_h f, w_h) = ( f, w_h),\quad \forall w_h\in \mathbb{K}_h^k.
\end{equation}

    Denote by $V_0:=T(L^2\Lambda^k(\Omega))\subset \mathring{H}^*\Lambda^{k}(\Omega)$ the solution space consisting of all possible solutions  of \eqref{eq:gen:source}. The space $V_0$ will be endowed with its natural norm $\| v\|_{V_0}:=\inf_{ f\in L^2\Lambda^k(\Omega),T f= v}\| f\|_{L^2}$.  We need the following to prove the strong approximation.
\begin{lemma}[Compact embedding]\label{lem:elliptic_reg}
Assume that the vector field $u\in L^{\infty}$. Then $V_0$ is compactly embedded in $\mathring{H}^*\Lambda^{k}(\Omega)$.
\end{lemma}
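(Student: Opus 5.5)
The plan is to take a bounded set $\{v_j=Tf_j\}$ with $\|f_j\|_{L^2}\le 1$ and extract a subsequence that converges in the $H^*\Lambda$ norm. I would handle the two parts of that norm, $v_j$ in $L^2$ and $\delta^k v_j$ in $L^2$, separately.

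\emph{Step 1 ($L^2$ part).} By the coercivity of $a$ (Lemma~\ref{lem:coercivity}) and Lax--Milgram, $\|v_j\|_{H^*\Lambda}\le C\|f_j\|_{L^2}$. Since $v_j\in\mathbb K^k$, we have $d^kv_j=0$, and $v_j$ lies in $\mathring H^*\Lambda^k\cap H\Lambda^k$. The $s$-regularity \eqref{eq:s-regular} with $s>1/2$ then bounds $v_j$ in $H^s$, so a subsequence converges in $L^2$. This is the compactness of $\mathbb K^k\hookrightarrow L^2\Lambda^k$ already used for $T$. It follows that $\mathrm{i}_u^kv_j$ also converges in $L^2\Lambda^{k-1}$, because multiplication by $u\in L^\infty$ is bounded on $L^2$.

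\emph{Step 2 (equation for the flux).} Set $\sigma_j:=\varepsilon\delta^kv_j+\mathrm{i}_u^kv_j\in L^2\Lambda^{k-1}$ and let $P_{\mathbb K}$ be the $L^2$-orthogonal projection onto $\mathbb K^k$. I would extend \eqref{eq:gen:source} from $\mathbb K^k$ to all of $\mathring H^*\Lambda^k$ as follows. Write any $w\in\mathring H^*\Lambda^k$ as $w=P_{\mathbb K}w+\delta^{k+1}q$; this Hodge decomposition is available because the complex is closed. Since $\delta^k\delta^{k+1}q=0$ and $(v_j,\delta^{k+1}q)=0$, we get
\begin{equation*}
(\sigma_j,\delta^kw)=\bigl(P_{\mathbb K}(f_j-\nu v_j),w\bigr),\qquad\forall w\in\mathring H^*\Lambda^k(\Omega).
\end{equation*}
By duality with \eqref{eq:deRham_dual}, this says that $\sigma_j\in H\Lambda^{k-1}$ with $d^{k-1}\sigma_j=P_{\mathbb K}(f_j-\nu v_j)$, which is bounded in $L^2$.

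\emph{Step 3 ($\delta$ part).} Let $P_\delta$ be the $L^2$-orthogonal projection onto the closed range $\delta^k\mathring H^*\Lambda^k$. Since $\delta^kv_j$ lies in that range, $\delta^kv_j=\varepsilon^{-1}\bigl(P_\delta\sigma_j-P_\delta\mathrm{i}_u^kv_j\bigr)$. The second term converges along the subsequence from Step 1, because $P_\delta$ is bounded. For the first term, $z_j:=P_\delta\sigma_j$ lies in $\mathring H^*\Lambda^{k-1}$ and satisfies $\delta^{k-1}z_j=0$. Moreover, $\sigma_j-z_j$ lies in $d H\Lambda^{k-2}\oplus\mathcal H^{k-1}$, which is annihilated by $d^{k-1}$. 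Hence $d^{k-1}z_j=d^{k-1}\sigma_j$ is bounded. The same compactness (via \eqref{eq:s-regular}) as in Step 1 then gives a convergent subsequence of $z_j$ in $L^2$. Therefore $\delta^kv_j$ converges in $L^2$, and $V_0\hookrightarrow\mathring H^*\Lambda^k$ is compact.

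The main obstacle is Step 3. Because $u$ is only $L^\infty$, we cannot control $\delta(\mathrm{i}_uv)$ directly, so the argument must avoid differentiating the advection term. This is why I project onto the $\delta$-range and use $d$ of the flux, rather than seeking an $H^1$-type estimate for $v$. I expect the care to be needed in justifying the weak identity $d^{k-1}\sigma_j=P_{\mathbb K}(f_j-\nu v_j)$ with the correct boundary conditions, and the Hodge-decomposition claims. Both rest on the closed-range assumption and the duality between \eqref{eq:deRham} and \eqref{eq:deRham_dual}.
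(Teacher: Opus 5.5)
Your proof is correct, but after Step 1 it follows a different route from the paper. Once $v_j\to v$ in $L^2$, the paper does not examine the structure of $\delta^k v_j$ at all. It subtracts the source problems for two indices and tests with $e_{jm}=v_j-v_m\in\mathbb K^k$, so the multiplier term drops out. Coercivity then gives $\alpha\|e_{jm}\|_{H^*\Lambda}^2\le\|f_j-f_m\|_{L^2}\|e_{jm}\|_{L^2}\to0$. In other words, the $L^2$-Cauchy property upgrades directly to the $H^*\Lambda$-Cauchy property.

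Your Steps 2--3 take a structural route instead. You introduce the flux $\sigma_j=\varepsilon\delta^kv_j+\mathrm{i}_u^kv_j$. Using the Hodge decomposition and the $\delta$/$d$ duality, you show that $d^{k-1}\sigma_j=P_{\mathbb K}(f_j-\nu v_j)$ is bounded. You then apply the regularity estimate \eqref{eq:s-regular} a second time, in degree $k-1$, to the $\delta$-range component $z_j=P_\delta\sigma_j$. These steps all hold. In particular, the orthogonal complement of $\delta^k\mathring H^*\Lambda^k$ is $\ker d^{k-1}$, so $d^{k-1}z_j=d^{k-1}\sigma_j$.

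What each route buys:
\begin{itemize}
\item The paper's energy argument is shorter and uses regularity only in degree $k$. It is also abstract: it works for any sesquilinear form that is coercive on $\mathbb K^k$ in the $H^*\Lambda$ norm, once $\mathbb K^k\hookrightarrow L^2\Lambda^k$ is compact. It never needs to know how the advection term enters.
\item Your argument needs a second use of the regularity estimate, Hodge decompositions in two degrees, and more bookkeeping. In return it yields structural information that the energy argument hides. Specifically, $\varepsilon\delta^kTf=z(f)-P_\delta\mathrm{i}_u^kTf$, where $f\mapsto z(f)$ is bounded from $L^2$ into $H^s$. So the only non-smooth part of $\delta^kTf$ is the projected advection term, and that term is controlled by $\|u\|_{L^\infty}\|Tf\|_{L^2}$ alone. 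This splitting could be useful if you later want more explicit strong-approximation estimates for velocity fields that are only in $L^\infty$.
\end{itemize}
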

\begin{proof}
Let $\{f_j\}_{j\geq1}$ be bounded in $L^2\Lambda^k(\Omega)$,
and let $(v_j,p_j)$ be the solutions of the following problem:
\begin{align*}
    \varepsilon(\delta^k v_j,\delta^k w) + (\mathrm{i}_u^k v_j,\delta^k w) + (\delta^{k+1} p_j, w) +\nu (v_j,w)& =(f_j,w),\quad \forall  w\in  \mathring{H}^*\Lambda^k(\Omega),\\
    ( v_j,\delta ^{k+1} q)&=0,\quad \forall  q\in \mathring{H}^*\Lambda^{k+1}(\Omega).
\end{align*}
By the stability estimate and the continuous embedding
$\mathbb K^k\hookrightarrow H^s\Lambda^k(\Omega)$, for some $s> \frac{1}{2}$,
\[
\|v_j\|_{H^s}\leq C\|f_j\|_{L^2}.
\]
Since $H^s\Lambda^k(\Omega)$ is compactly embedded in $L^2\Lambda^k(\Omega)$, we can extract a subsequence, still denoted by $v_j$ for simplicity:
\[
v_j\rightarrow v
\quad\text{strongly in }L^2.
\]

For $j,m\geq1$, set
\[
e_{jm}:=v_j-v_m,\qquad
r_{jm}:=p_j-p_m,\qquad
g_{jm}:=f_j-f_m.
\]
Subtracting the two mixed formulations and testing with $e_{jm}$ yields
\[
|\varepsilon(\delta^k e_{jm},\delta^k e_{jm})
+(\mathrm i_u^k e_{jm},\delta^k e_{jm})
+\nu( e_{jm},e_{jm})|
=|(g_{jm},e_{jm})|,
\]
where the multiplier term vanishes since
\[
(e_{jm},\delta^{k+1}r_{jm})=0.
\]
Hence for some $\alpha>0$,
\begin{equation*}
    \alpha\|e_{jm}\|_{H^*\Lambda}^2\leq \|g_{jm}\|_{L^2}\|e_{jm}\|_{L^2}.
\end{equation*}
Since $\{f_j\}$ is bounded and $\{v_j\}$ is a Cauchy sequence in $L^2$,
the right-hand side tends to zero as $j,m\to\infty$. Therefore
\[
\| v_j-v_m\|_{H^*\Lambda}\longrightarrow0.
\]
Consequently, $\{v_j\}$ is a Cauchy sequence in
$\mathring H^*\Lambda^k(\Omega)$, and hence has a strongly
convergent subsequence in this space.
\end{proof}

\begin{lemma}[Strong approximation] There exists $\omega_2(h)$, tending to zero as $h$ goes to zero, such that for any $ u \in V_0$, there exists $ u_h\in \mathbb K_h^k$ such that
    \begin{equation*}
        \| u- u_h\|_{H^*\Lambda} \leq \omega_2(h)\| u\|_{V_0}.
    \end{equation*}
\end{lemma}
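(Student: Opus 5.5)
The argument is the classical one for strong approximation in mixed Maxwell eigenproblems, with compactness supplied by Lemma \ref{lem:elliptic_reg}. First I show pointwise approximability in $\mathring{H}^*\Lambda^k$ by elements of the discrete kernel. Then I upgrade this to a uniform rate $\omega_2(h)$ on the unit ball of $V_0$ using the compact embedding of $V_0$ into $\mathring{H}^*\Lambda^k(\Omega)$.

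\textbf{Step 1: the approximant.} Fix $u\in V_0\subset\mathbb K^k$. The natural candidate is $\pi_h^k u$. It is not in $\mathbb K_h^k$ in general, so I correct it by a discrete Hodge-type projection. I define $u_h\in\mathbb K_h^k$ as the $H^*\Lambda$-orthogonal projection of $u$ onto $\mathbb K_h^k$, or equivalently the solution of a discrete mixed problem with the constraint $(u_h,\delta^{k+1}q_h)=0$ for all $q_h\in\Lambda_h^{k+1}$. By the standard saddle-point estimate, using the discrete inf-sup condition that follows from the bounded cochain projections $\pi_h$, we get
\begin{equation*}
\|u-u_h\|_{H^*\Lambda}\le C\Bigl(\inf_{w_h\in\Lambda_h^k}\|u-w_h\|_{H^*\Lambda}+\sup_{q_h\in\Lambda_h^{k+1}}\frac{|(u,\delta^{k+1}q_h)|}{\|q_h\|_{H^*\Lambda}}\Bigr).
\end{equation*}
The last term vanishes because $u\in\mathbb K^k$ and $\Lambda_h^{k+1}\subset\mathring H^*\Lambda^{k+1}$.

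A more hands-on alternative mirrors the proof of Lemma \ref{lem:str_dis_com}. Set $w=\pi_h^k u$. The commuting property gives $\delta^k w=\pi_h^{k-1}\delta^k u$. I then subtract the $L^2$-projection of $w$ onto $\delta^{k+1}\Lambda_h^{k+1}$, which does not change $\delta^k$. This produces an element of $\mathbb K_h^k$, and the removed part is bounded by $\|u-\pi_h^k u\|_{L^2}$. In either version the result is
\begin{equation*}
\inf_{u_h\in\mathbb K_h^k}\|u-u_h\|_{H^*\Lambda}\le C\|u-\pi_h^k u\|_{H^*\Lambda}.
\end{equation*}
Since the $\pi_h^k$ are uniformly bounded projections onto spaces whose union is dense (smooth forms are approximated), the right-hand side tends to $0$ for each fixed $u\in\mathring H^*\Lambda^k$. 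So $E_h(u):=\inf_{u_h\in\mathbb K_h^k}\|u-u_h\|_{H^*\Lambda}\to0$ pointwise.

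\textbf{Step 2: uniformity.} Define $\omega_2(h):=\sup\{E_h(u):u\in V_0,\ \|u\|_{V_0}\le1\}$. Each $E_h$ is $1$-Lipschitz in the $H^*\Lambda$ norm, so the family $\{E_h\}$ is equicontinuous. By Lemma \ref{lem:elliptic_reg}, the unit ball $B$ of $V_0$ is relatively compact in $\mathring H^*\Lambda^k$. Equicontinuity together with pointwise convergence to zero on a relatively compact set gives uniform convergence on that set. Concretely, cover $\overline B$ by finitely many $\eta$-balls and use pointwise convergence at their centres. This gives $\omega_2(h)\to0$, and homogeneity yields $E_h(u)\le\omega_2(h)\|u\|_{V_0}$.

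\textbf{Main obstacle.} The delicate point is the correction step that lands the approximant in $\mathbb K_h^k$ without losing control of $\delta^k$. The cleanest choice is to keep $\delta^k$ unchanged via the commuting projection, so the $L^2$ correction is controlled by the interpolation error. I would also check that the density of $\bigcup_h\Lambda_h^k$ in $\mathring H^*\Lambda^k$ follows from Assumption \ref{assump:interpolation}, applied to smooth compactly supported forms combined with uniform boundedness. The paper's $H^s$ regularity is only available for $u\in V_0$ in the $L^2$ part, not for $\delta^k u$. This is exactly why an explicit rate is not claimed and compactness is needed.
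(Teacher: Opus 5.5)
Your proposal is correct and follows essentially the same route as the paper. The paper combines the compact embedding of $V_0$ into $\mathring{H}^*\Lambda^k(\Omega)$ from Lemma~\ref{lem:elliptic_reg} with the argument of Boffi's Proposition~6, which upgrades pointwise approximability of the continuous kernel by $\mathbb K_h^k$ to a uniform rate on the unit ball of $V_0$. Your hands-on kernel approximant is a valid way to supply the pointwise step: you take $\pi_h^k u$ and subtract its $L^2$-projection onto $\delta^{k+1}\Lambda_h^{k+1}$, which leaves $\delta^k$ unchanged and costs at most $\|u-\pi_h^k u\|_{L^2}$.
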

\begin{proof}
Using the compactness result of Lemma \ref{lem:elliptic_reg}, the proof follows a similar argument as in \cite[Proposition 6]{2007Boffi}.
\end{proof}

\begin{theorem}\label{thm:norm_conv:gen}
    It holds that
    \begin{equation*}
        \|T-T_h\|_{\mathcal{L}(L^2\Lambda^k(\Omega);H^*\Lambda^k(\Omega))} \rightarrow 0,\quad \text{ as }h\rightarrow 0.
    \end{equation*}
\end{theorem}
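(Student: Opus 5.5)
The plan is to obtain uniform convergence from a Céa/Strang-type estimate on the discrete kernel, combined with the strong and weak approximation lemmas, following the Boffi framework for mixed eigenvalue problems. Fix $f\in L^2\Lambda^k(\Omega)$ and set $v=Tf\in V_0$, $v_h=T_hf\in\mathbb K_h^k$. In the mixed form, $v$ comes with a multiplier $p\in\mathring H^*\Lambda^{k+1}(\Omega)$ satisfying
\[
a(v,w)+(\delta^{k+1}p,w)=(f,w)\quad\forall w\in\mathring H^*\Lambda^k(\Omega),
\]
with $\|\delta^{k+1}p\|_{L^2}\le C\|f\|_{L^2}$. This bound follows by testing with $w=\delta^{k+1}p$, which lies in the kernel's orthogonal complement, and using the boundedness of $a$ together with $\|v\|_{H^*\Lambda}\le C\|f\|_{L^2}$. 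The discrete problem \eqref{eq:gen:discrete_source} is posed on $\mathbb K_h^k$, so for every $w_h\in\mathbb K_h^k$ the consistency error is
\[
a(v-v_h,w_h)=-(\delta^{k+1}p,w_h).
\]
Note that $\mathbb K_h^k\not\subset\mathbb K^k$ in general, so this error does not vanish.

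For the stability step I will use the coercivity $|a(w,w)|\ge\alpha\|w\|_{H^*\Lambda}^2$ established after Lemma \ref{lem:coercivity}. That estimate holds on all of $\mathring H^*\Lambda^k(\Omega)$, hence on $\mathbb K_h^k$, and it does not use the discrete complex. Take any $u_h\in\mathbb K_h^k$ and set $e_h=u_h-v_h\in\mathbb K_h^k$. Then
\[
\alpha\|e_h\|_{H^*\Lambda}^2\le|a(e_h,e_h)|\le|a(u_h-v,e_h)|+|(\delta^{k+1}p,e_h)|.
\]
The first term is at most $C\|u_h-v\|_{H^*\Lambda}\|e_h\|_{H^*\Lambda}$. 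The second is at most $\omega_1(h)\|\delta^{k+1}p\|_{L^2}\|e_h\|_{H^*\Lambda}$ by the weak approximation lemma. The triangle inequality then gives
\[
\|v-v_h\|_{H^*\Lambda}\le C\inf_{u_h\in\mathbb K_h^k}\|v-u_h\|_{H^*\Lambda}+C\omega_1(h)\|\delta^{k+1}p\|_{L^2}.
\]

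Next I bound both terms uniformly in $f$. The strong approximation lemma gives $u_h$ with $\|v-u_h\|_{H^*\Lambda}\le\omega_2(h)\|v\|_{V_0}\le\omega_2(h)\|f\|_{L^2}$, where the last step uses the definition of the $V_0$ norm. The multiplier bound gives $\|\delta^{k+1}p\|_{L^2}\le C\|f\|_{L^2}$. Together,
\[
\|(T-T_h)f\|_{H^*\Lambda}\le C(\omega_1(h)+\omega_2(h))\|f\|_{L^2}.
\]
Taking the supremum over $\|f\|_{L^2}=1$ yields the theorem.

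The main obstacle is the existence and bound of the multiplier $p$, since $T$ was defined only on $\mathbb K^k$. I would argue it as follows. The functional $w\mapsto(f,w)-a(v,w)$ vanishes on $\mathbb K^k$. Under the closed-range assumption, the $L^2$-orthogonal complement of $\mathbb K^k$ in $\mathring H^*\Lambda^k(\Omega)$ is $\delta^{k+1}\mathring H^*\Lambda^{k+1}(\Omega)$, which is equipped with a Poincaré inequality. On $\delta w$ the functional acts through $(f,\cdot)$, $\nu(v,\cdot)$ and $(\mathrm i_u v,\delta w)$, and the last term vanishes because $\delta\delta=0$. The functional can therefore be represented as $-(\delta^{k+1}p,\cdot)$ with the stated bound.
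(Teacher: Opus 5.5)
Your proposal is correct and follows essentially the argument the paper defers to (Boffi's Theorem 13.4): coercivity of the shifted form on the discrete kernel $\mathbb K_h^k$, a Strang-type consistency term $(\delta^{k+1}p,e_h)$ bounded uniformly by the weak approximation lemma, and a best-approximation term bounded by strong approximation together with $\|Tf\|_{V_0}\le\|f\|_{L^2}$. Your closed-range construction of the multiplier fills in a detail the paper leaves implicit. Two small remarks on it: the term $\nu(v,\cdot)$ also vanishes on $\delta^{k+1}\mathring{H}^*\Lambda^{k+1}(\Omega)$ because $v\in\mathbb K^k$, so $\delta^{k+1}p$ is simply the $L^2$-projection of $f$ onto that range and $\|\delta^{k+1}p\|_{L^2}\le\|f\|_{L^2}$; and with your sign convention the functional is represented as $+(\delta^{k+1}p,\cdot)$, not $-(\delta^{k+1}p,\cdot)$.
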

\begin{proof}
The proof follows a similar argument as in \cite[Theorem 13.4]{2010Boffi}.
\end{proof}

\begin{remark}
The framework that we follow in this paper requires proving that the solution operator maps $L^2\Lambda^k$ compactly into $H^*\Lambda^k$. This yields the operator convergence
\[
\|T-T_h\|_{\mathcal{L}(L^2\Lambda^k(\Omega);H^*\Lambda^k(\Omega))}\to 0,
\]
which in turn immediately implies
\[
\|T-T_h\|_{\mathcal{L}(L^2\Lambda^k(\Omega);L^2\Lambda^k(\Omega))}\to 0.
\]
\end{remark}
Combining Theorem~\ref{thm:norm_conv:gen} with \eqref{eq:spec:auxi1} yields a two-sided approximation result for the pseudospectra of the operator $\mathcal{L}_u$ itself, rather than only for those of its solution operator $T$. Let $X_h:=\mathbb{K}^k_h$, and let $P_{X_h}:H\rightarrow X_h$ be the $L^2$-orthogonal projection. Since the discrete source problem \eqref{eq:gen:discrete_source} is uniquely solvable, $T_h|_{X_h}:X_h\rightarrow X_h$ is an isomorphism. We therefore define $\mathcal{L}_{u,h}: X_h\rightarrow X_h$ by $(\mathcal{L}_{u,h}+\nu I)T_hf=P_{X_h}f$ for all $f\in H$. Since $a(T_hf,w_h)=(f,w_h)$ for all $w_h\in X_h$, the argument leading to \eqref{eq:spec:auxi1} applies verbatim with $(\mathcal{L}_u,T,X)$ replaced by $(\mathcal{L}_{u,h},T_h,X_h)$. We use the notation $\sigma^{X_h}_{\eta}(\mathcal{L}_{u,h})$ and $\sigma^{H}_{\tau}(T_h)$ as in Section~\ref{sec:pseudospectra}.

\begin{corollary}[Approximation of the pseudospectra of the operator]\label{cor:pseudospectra:operator}
    Let $0<a\leq b<\infty$, and set $K:=\{\lambda\in\mathbb{C}: a\leq|\lambda+\nu|\leq b\}$. Let $0<\eta<\eta'$ satisfy $a(\eta'-\eta)>2\eta\eta'$. Set
    \begin{equation*}
        \tau_\pm(\eta,\lambda):=\frac{\eta}{|\lambda+\nu|\bigl(|\lambda+\nu|\mp\eta\bigr)},\qquad \eta^*:=\min_{\lambda\in K}\bigl(\tau_-(\eta',\lambda)-\tau_+(\eta,\lambda)\bigr).
    \end{equation*}
    Then $\eta^*>0$ and, whenever $\|T-T_h\|_{\mathcal{L}(H;H)}<\eta^*$,
    \begin{equation}\label{eq:pseudospectra:operator}
        \sigma^{X_h}_{\eta}(\mathcal{L}_{u,h})\cap K\subseteq\sigma^{X}_{\eta'}(\mathcal{L}_u)
        \qquad\text{and}\qquad
        \sigma^{X}_{\eta}(\mathcal{L}_u)\cap K\subseteq\sigma^{X_h}_{\eta'}(\mathcal{L}_{u,h}).
    \end{equation}
    In particular, by Theorem~\ref{thm:norm_conv:gen}, \eqref{eq:pseudospectra:operator} holds for all sufficiently small $h$.
\end{corollary}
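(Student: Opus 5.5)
The plan is to chain three inclusions: the two-sided bound \eqref{eq:spec:auxi1}, for the continuous pair $(\mathcal L_u,T,X)$ and for the discrete pair $(\mathcal L_{u,h},T_h,X_h)$, together with the perturbation result Theorem~\ref{thm:pseudospectra:perturb} applied to $S=T_h$, $T$ on $H$.

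\textbf{Step 1: $\eta^*>0$.} For $\lambda\in K$ write $r=|\lambda+\nu|\in[a,b]$. Then $\tau_+(\eta,\lambda)=\eta/(r(r-\eta))$ and $\tau_-(\eta',\lambda)=\eta'/(r(r+\eta'))$. We have $\tau_-(\eta')>\tau_+(\eta)$ if and only if $\eta'(r-\eta)>\eta(r+\eta')$, that is, $r(\eta'-\eta)>2\eta\eta'$. Since $r\ge a$, this follows from the hypothesis $a(\eta'-\eta)>2\eta\eta'$. That hypothesis also forces $\eta<a\le r$, so $\tau_+$ is finite and positive. The difference is continuous in $r$ on the compact interval $[a,b]$ and strictly positive there, so its minimum $\eta^*$ is positive.

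\textbf{Step 2: first inclusion.} Let $\lambda\in\sigma^{X_h}_\eta(\mathcal L_{u,h})\cap K$ and set $z=(\lambda+\nu)^{-1}$. If $\lambda$ is an eigenvalue of $\mathcal L_{u,h}$, then $z\in\sigma(T_h)$ by the eigenpair correspondence. Otherwise the discrete version of \eqref{eq:spec:auxi1} applies, since $\eta<|\lambda+\nu|$. Its left inequality gives $\|(zI-T_h)^{-1}\|>\tau_+^{-1}$, so $z\in\sigma^H_{\tau_+}(T_h)$.

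Because $\|T-T_h\|<\eta^*\le\tau_--\tau_+$, Theorem~\ref{thm:pseudospectra:perturb} yields $z\in\sigma^H_{\tau_++\|T-T_h\|}(T)\subseteq\sigma^H_{\tau_-(\eta')}(T)$, using monotonicity of pseudospectra in the parameter. The continuous right inequality of \eqref{eq:spec:auxi1} then gives $\lambda\in\sigma^X_{\eta'}(\mathcal L_u)$. The case $z\in\sigma(T)$, $z\neq0$, is handled by the eigenpair correspondence, which makes $\lambda$ an eigenvalue of $\mathcal L_u$.

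\textbf{Step 3: second inclusion.} This is symmetric. Start from $\lambda\in\sigma^X_\eta(\mathcal L_u)\cap K$, pass to $z\in\sigma^H_{\tau_+}(T)$, perturb to $z\in\sigma^H_{\tau_-(\eta')}(T_h)$, and return via the discrete \eqref{eq:spec:auxi1} to $\lambda\in\sigma^{X_h}_{\eta'}(\mathcal L_{u,h})$. The final sentence of the statement follows from Theorem~\ref{thm:norm_conv:gen}, since convergence in $\mathcal L(L^2;H^*\Lambda)$ implies convergence in $\mathcal L(H;H)$.

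\textbf{Main obstacle.} The delicate point is the bookkeeping at points of the spectrum, where the resolvent is infinite. Here one must check that the convention $\|(\lambda I-\mathcal A)^{-1}\|=+\infty$ is compatible on both sides of \eqref{eq:spec:auxi1}. Concretely:
\begin{itemize}
\item a finite resolvent of $T$ at $z\neq0$ must yield a bounded resolvent of $\mathcal L_u$ on $X$;
\item conversely, a bounded resolvent of $\mathcal L_u$ must yield a bounded resolvent of $T$;
\item in the discrete case, $T_h$ has finite rank, so one needs that $z\notin\sigma(T_h)$ whenever $\lambda$ is not an eigenvalue of $\mathcal L_{u,h}$. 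This holds because $T_h|_{X_h}$ is an isomorphism and $T_h$ vanishes on $X_h^\perp$.
\end{itemize}
I would verify these using the explicit formula $(\lambda I-\mathcal L_u)^{-1}P_X=zI-z^2(zI-T)^{-1}$, read in both directions.
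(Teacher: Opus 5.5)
Your proposal is correct and is essentially the paper's proof: the same computation showing $\tau_-(\eta',\lambda)-\tau_+(\eta,\lambda)>0$ on $K$, then the chain discrete \eqref{eq:spec:auxi1} $\Rightarrow$ Theorem~\ref{thm:pseudospectra:perturb} $\Rightarrow$ continuous \eqref{eq:spec:auxi1}, with the second inclusion obtained by exchanging roles. Your explicit check that the spectral-point cases are compatible fills in what the paper leaves implicit in ``exchanging the roles''.

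There is one labeling slip: you have the two halves of \eqref{eq:spec:auxi1} swapped. The bound $\|(zI-T_h)^{-1}\|>\tau_+^{-1}$ follows from the right (upper) inequality. The final passage to $\lambda\in\sigma^X_{\eta'}(\mathcal L_u)$ uses the left (lower) inequality $\|(\lambda I-\mathcal L_u)^{-1}\|\geq|z|^2\|(zI-T)^{-1}\|-|z|$.
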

\begin{proof}
    With $\alpha:=|\lambda+\nu|$, a direct computation gives
    \begin{equation*}
        \tau_-(\eta',\lambda)-\tau_+(\eta,\lambda)=\frac{\alpha(\eta'-\eta)-2\eta\eta'}{\alpha(\alpha+\eta')(\alpha-\eta)}.
    \end{equation*}
    The assumption $a(\eta'-\eta)>2\eta\eta'$ implies $\eta<a/2$, hence the above quantity is continuous and positive for $a\leq \alpha\leq b$, and therefore $\eta^*>0$.

    Let $\lambda\in\sigma^{X_h}_{\eta}(\mathcal{L}_{u,h})\cap K$ and set $z=(\lambda+\nu)^{-1}$. If $\lambda\in\sigma(\mathcal{L}_{u,h})$, then $z\in\sigma(T_h)$ by the eigenpair correspondence. Otherwise, since $\eta<\alpha$, the upper bound in the discrete counterpart of \eqref{eq:spec:auxi1} gives $\|(zI-T_h)^{-1}\|_{\mathcal{L}(H;H)}>\tau_+(\eta,\lambda)^{-1}$. In both cases $z\in\sigma^{H}_{\tau_+(\eta,\lambda)}(T_h)$, and Theorem~\ref{thm:pseudospectra:perturb} yields
    \begin{equation*}
        z\in\sigma^{H}_{\tau_+(\eta,\lambda)+\|T-T_h\|_{\mathcal{L}(H;H)}}(T)\subseteq\sigma^{H}_{\tau_-(\eta',\lambda)}(T).
    \end{equation*}
    If $\lambda\in\sigma(\mathcal{L}_u)$, there is nothing to prove. Otherwise, the lower bound in \eqref{eq:spec:auxi1} gives
    \begin{equation*}
        \|(\lambda I-\mathcal{L}_u)^{-1}\|_{\mathcal{L}(X;X)}\geq|z|^2\|(zI-T)^{-1}\|_{\mathcal{L}(H;H)}-|z|>|z|^2\tau_-(\eta',\lambda)^{-1}-|z|={\eta'}^{-1},
    \end{equation*}
    that is, $\lambda\in\sigma^{X}_{\eta'}(\mathcal{L}_u)$. The second inclusion in \eqref{eq:pseudospectra:operator} follows by exchanging the roles of $(\mathcal{L}_u,T,X)$ and $(\mathcal{L}_{u,h},T_h,X_h)$.
\end{proof}

Before concluding this section, we introduce the a priori error estimate of the advection-diffusion eigenvalue problem. Following eigenvalue analysis, we consider the solution operators of the adjoint problem: $T^*:L^2\Lambda^k(\Omega)\rightarrow \mathbb K^k$ defined by
\begin{equation}\label{eq:gen:source_dual}
    a( w,T^* f) = ( w, f),\quad \forall  w\in \mathbb{K}^k,
\end{equation}
and $T_h^*:L^2\Lambda^k(\Omega)\rightarrow \mathbb K_h^k$ defined by
\begin{equation}\label{eq:gen:discrete_source_dual}
    a( w_h,T_h^* f) = ( w_h, f),\quad \forall w_h\in \mathbb{K}_h^k.
\end{equation}

Let $\mu$ be a nonzero eigenvalue of $T$ with algebraic multiplicity $m$. Then $\bar{\mu}$ will be an eigenvalue of $T^*$ with the same algebraic multiplicity $m$. Let  $E$ and $E^*$ be the spaces of generalized eigenvectors of $T$ and $T^*$ corresponding to $\mu$ and $\bar \mu$, respectively.
 Then, from Theorem \ref{thm:norm_conv:gen}, there exist $m$ eigenvalues of $T_h$, denoted by $\mu_{1h},\cdots,\mu_{mh}$, converging to $\mu$. Therefore the following holds   (cf. \cite{1981MercierOsborn,2010Boffi}).
\begin{lemma}\label{lem:eigenvalue_err}
    There exist two constants $C>0$ and $h_0>0$ such that for any $h<h_0$
    \begin{equation*}
        |\mu^{-1}-\frac{1}{m}\sum_{i=1}^m\mu_{ih}^{-1}| \leq C\big( \sup_{\substack{ u\in E\\ \| u\|_{L^2} =1}}\sup_{\substack{ v\in E^*\\ \| v\|_{L^2} =1}}| ((T-T_h) u, v)| + \|(T-T_h)_{/E}\|\|(T^*-T_h^*)_{/E^*}\|\big),
    \end{equation*}
    where
    $$
    \|(T-T_h)_{/E}\|:= \sup_{ v\in E,\| v\|_{L^2}=1}\|(T-T_h) v\|_{L^2}, \quad \|(T^*-T_h^*)_{/E^*}\|:= \sup_{ v\in E^*,\| v\|_{L^2}=1}\|(T^*-T_h^*)v \|_{L^2}.
    $$
\end{lemma}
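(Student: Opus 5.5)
We follow the standard Babuška–Osborn argument for non-self-adjoint compact operators, in the form of Mercier–Osborn~\cite{1981MercierOsborn}. Fix a circle $\Gamma\subset\rho(T)$ centered at $\mu$ that encloses no other point of $\sigma(T)$ and does not contain $0$. Define the spectral projections
\[
E=\frac{1}{2\pi i}\oint_\Gamma (z-T)^{-1}\,dz,\qquad E_h=\frac{1}{2\pi i}\oint_\Gamma (z-T_h)^{-1}\,dz,
\]
with the same notation used for the projections and their ranges. Theorem~\ref{thm:norm_conv:gen} gives $\|T-T_h\|_{\mathcal L(L^2;L^2)}\to0$. By the Neumann-series argument of Theorem~\ref{thm:pseudospectra:perturb}, the resolvents $(z-T_h)^{-1}$ are then uniformly bounded on $\Gamma$ for $h<h_0$. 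Hence $\|E-E_h\|\to0$, $\dim E_h(L^2\Lambda^k)=m$, and exactly $\mu_{1h},\dots,\mu_{mh}$ lie inside $\Gamma$.

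The key identity comes from restricting to the finite-dimensional space $E$. Let $\hat T=T|_E$, whose single eigenvalue is $\mu$ with multiplicity $m$, so $\operatorname{tr}\hat T^{-1}=m\mu^{-1}$. Take $\Lambda_h:=(E_h|_E)^{-1}E_h:E_h(L^2)\to E$, which is well defined and uniformly bounded for small $h$ since $E_h|_E\to I_E$. Set $\hat T_h:=\Lambda_h T_h\Lambda_h^{-1}$ acting on $E$. It is similar to $T_h|_{E_h}$, so its eigenvalues are the $\mu_{ih}$ and $\sum_i\mu_{ih}^{-1}=\operatorname{tr}\hat T_h^{-1}$. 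We fix a basis $\{\phi_j\}$ of $E$ and the dual basis $\{\phi_j^*\}\subset E^*$, with $(\phi_j,\phi_l^*)=\delta_{jl}$; this is possible because the pairing $E\times E^*\to\mathbb C$ is nondegenerate. Then
\[
m\Bigl(\mu^{-1}-\tfrac1m\textstyle\sum_i\mu_{ih}^{-1}\Bigr)=\operatorname{tr}(\hat T^{-1}-\hat T_h^{-1})=\operatorname{tr}\bigl(\hat T^{-1}(\hat T_h-\hat T)\hat T_h^{-1}\bigr).
\]
Since $\hat T^{-1}$ and $\hat T_h^{-1}$ are uniformly bounded on the fixed space $E$ (because $\mu\neq0$ and $\mu_{ih}\to\mu$), it suffices to bound the matrix entries $((\hat T-\hat T_h)\phi_j,\phi_l^*)$.

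The main step is to expand these entries. Write $\hat T-\hat T_h=(\hat T-\hat T_h)$ as it acts on $E$. We use $(E_h|_E)^{-1}E_h T_h=(E_h|_E)^{-1}T_h E_h$ and the fact that $E^*$ is the range of the adjoint projection $E^\ast$ for $T^*$. Then
\[
((T-\hat T_h)\phi,\phi^*)=((T-T_h)\phi,\phi^*)+((T-T_h)(I-E_h)\phi,\ (\ldots)\phi^*)\text{-type remainder}.
\]
More concretely, we write $(I-E_h)\phi=\frac{1}{2\pi i}\oint_\Gamma (z-T_h)^{-1}(T-T_h)(z-T)^{-1}\phi\,dz$ and use $(z-T)^{-1}\phi\in E$. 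Pairing against $\phi^*$ and moving $(z-T_h)^{-1}$ to the adjoint side gives $((T-T_h)\psi,(\bar z-T_h^*)^{-1}\phi^*)$ with $\psi\in E$. We then replace $(\bar z-T_h^*)^{-1}\phi^*$ by $(\bar z-T^*)^{-1}\phi^*\in E^*$. This splits the quantity into a leading term of the form $((T-T_h)u,v)$ with $u\in E$ and $v\in E^*$, plus a product term bounded by $\|(T-T_h)_{/E}\|$ times $\|(T^*-T_h^*)_{/E^*}\|$. For the product bound we use the resolvent identity $(\bar z-T_h^*)^{-1}-(\bar z-T^*)^{-1}=(\bar z-T_h^*)^{-1}(T_h^*-T^*)(\bar z-T^*)^{-1}$, the fact that $(\bar z-T^*)^{-1}$ preserves $E^*$, and the uniform bound on the discrete resolvents along $\Gamma$. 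The boundedness of $\Lambda_h$ and $\Lambda_h^{-1}$ handles the conjugation, whose deviation from the identity is itself of the same order.

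We expect this bookkeeping to be the main obstacle: one must show that every remainder carries one factor of the $E$-restricted error and one of the $E^*$-restricted error, rather than a single full operator norm. If the direct manipulation becomes messy, the fallback is to cite~\cite[Theorem~9.2]{1991BabuskaOsborn} or~\cite{1981MercierOsborn} verbatim. Their hypotheses are compactness of $T$ and norm convergence $T_h\to T$ in $\mathcal L(L^2;L^2)$, both of which we have, together with the analogous convergence $T_h^*\to T^*$. The latter follows because $T_h^*$ is exactly the $L^2$-adjoint of $T_h$ (from \eqref{eq:gen:discrete_source_dual}), so $\|T^*-T_h^*\|=\|T-T_h\|$. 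Dividing the trace estimate by $m$ then yields the stated inequality with a constant $C$ depending on $\Gamma$, $\mu$, and the chosen bases but not on $h$.
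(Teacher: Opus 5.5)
Your proposal is correct and follows essentially the same route as the paper, which gives no independent argument but simply invokes the Mercier--Osborn/Babu\v{s}ka--Osborn theory \cite{1981MercierOsborn,2010Boffi} once Theorem~\ref{thm:norm_conv:gen} supplies norm convergence. That is exactly your fallback, and your observation that $T_h^*$ is the $L^2$-adjoint of $T_h$ (so $\|T^*-T_h^*\|=\|T-T_h\|$) correctly covers the adjoint hypothesis. Your self-contained sketch is sound in outline, and the identity for $\operatorname{tr}(\hat T^{-1}-\hat T_h^{-1})$ is the right way to obtain the mean of the reciprocals $\mu_{ih}^{-1}$. However, the entrywise expansion of $((\hat T-\hat T_h)\phi,\phi^*)$ is only gestured at. It does close: up to product terms, every remainder has the form $((E-E_h)x,w^*)$ with $x\in E$ and $w^*\in E^*$, and your contour-plus-resolvent-identity argument bounds such terms by quantities already on the right-hand side.
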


To derive the convergence result, we consider the source problem without the $d$-free constraint. Define the solution operator $\tilde{T}: L^2\Lambda^k(\Omega)\rightarrow \mathring{H}^*\Lambda^{k}(\Omega)$ by
\begin{equation}\label{tilde-T-problem}
    a(\tilde{T} f, w) = ( f, w),\quad \forall  w\in \mathring{H}^*\Lambda^{k}(\Omega),
\end{equation}
and define $\tilde{T}_h  :L^2\Lambda^k(\Omega)\rightarrow \Lambda_h^k$ by
\begin{equation*}
    a(\tilde{T}_h f, w) = ( f, w),\quad \forall  w\in \Lambda_h^k.
\end{equation*}
Recall that the difference between $\tilde{T}$, $\tilde{T}_{h}$ and $T$, $T_{h}$ is that the latter operators map to $\mathbb K^k$ and $\mathbb K_{h}^k$.

Similarly, define the solution operator $\tilde{T}^* :L^2\Lambda^k(\Omega)\rightarrow \mathring{H}^*\Lambda^{k}(\Omega)$ of the adjoint problem by
\begin{equation*}
    a( w,\tilde{T}^* f) = ( w, f),\quad \forall  w\in \mathring{H}^*\Lambda^{k}(\Omega),
\end{equation*}
and $\tilde{T}^*_h :L^2\Lambda^k(\Omega)\rightarrow \Lambda_h^k$  by
\begin{equation*}
    a( w,\tilde{T}^*_h f) = ( w, f),\quad \forall  w\in \Lambda_h^k.
\end{equation*}
\begin{theorem}\label{thm:eigenvalue_err}
    Assume  the following approximation properties hold:
    \begin{equation}\label{assumption:r1}
        \sup_{\substack{ v\in E\\ \| v\|_{L^2} =1}} \| (\tilde{T}   - \tilde{T}_h)  v\|_{H^*\Lambda}\leq Ch^{r_1}
    \end{equation}
    and
    \begin{equation}\label{assumption:r2}
        \sup_{\substack{ v\in E^*\\ \| v\|_{L^2} =1}} \| (\tilde{T}^*   - \tilde{T}_h^*)  v\|_{H^*\Lambda}\leq Ch^{r_2}.
    \end{equation}
    Here, the rates $r_{1}, r_{2}>0$ and the constant $C>0$ may depend on the polynomial degrees of the discrete finite element spaces and the regularity of the generalized eigenvector spaces $E$ and $E^*$, the constant $\varepsilon$, the velocity field $ u$, but are independent of $h$. Furthermore, define $\Pi_{\delta}^k: L^2\Lambda^k(\Omega)\rightarrow \delta^{k+1}\mathring{H}^*\Lambda^{k+1}(\Omega)$ and $\Pi_{\delta,h}^k:L^2\Lambda^k(\Omega)\rightarrow\delta^{k+1}\Lambda_h^{k+1}(\Omega)$ as $L^2$-orthogonal projections to the range of $\delta$.
    Assume that
    \begin{equation}\label{assumption:r3}
        \sup_{\substack{ v\in E^*\\ \| v\|_{L^2} =1}}\| (I-\Pi_{\delta,h}^k)\Pi_{\delta}^k \tilde{T}^* v\|_{L^2}\leq Ch^{r_3}.
    \end{equation}
    Then we have
    \begin{equation*}
        |\mu^{-1}-\frac{1}{m}\sum_{i=1}^m\mu_{ih}^{-1}|\leq Ch^{{r_{1}+\min\{r_{2}, r_{3}\}}}.
    \end{equation*}
\end{theorem}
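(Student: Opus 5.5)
The plan is to feed Lemma~\ref{lem:eigenvalue_err} and to bound both terms on its right-hand side using the unconstrained operators $\tilde T,\tilde T_h,\tilde T^*,\tilde T_h^*$, for which \eqref{assumption:r1}--\eqref{assumption:r2} are available. The argument has three steps; the first is the only one with real content.

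\textbf{Step 1: on the kernel, constrained and unconstrained operators agree.} I will show that for $f\in\mathbb K^k$ one has $Tf=\tilde Tf$ and $T_hf=\tilde T_hf$, and likewise for the adjoints. Since $E,E^*\subset\mathbb K^k$ (they lie in the range of $T$, resp.\ $T^*$), this applies on $E$ and $E^*$.
\begin{itemize}
\item \emph{Continuous case.} Take $w=\delta^{k+1}q$ in \eqref{tilde-T-problem} with $q\in\mathring H^*\Lambda^{k+1}(\Omega)$. Since $\delta^k\delta^{k+1}=0$, the definition \eqref{def:a} gives $a(\tilde Tf,\delta^{k+1}q)=\nu(\tilde Tf,\delta^{k+1}q)$.
\item The right-hand side is $(f,\delta^{k+1}q)=0$ because $f\in\mathbb K^k$. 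As $\nu>0$, this gives $\tilde Tf\in\mathbb K^k$.
\item Hence $\tilde Tf$ satisfies \eqref{eq:gen:source}, and uniqueness (Lax--Milgram) gives $\tilde Tf=Tf$.
\item \emph{Discrete case.} Test with $w=\delta^{k+1}q_h$, which lies in $\Lambda_h^k$ because \eqref{discrete-delta-complex} is a complex. Again $(f,\delta^{k+1}q_h)=0$, since $\delta^{k+1}q_h\in\delta^{k+1}\mathring H^*\Lambda^{k+1}(\Omega)$ by conformity. So $\tilde T_hf\in\mathbb K_h^k$ and $\tilde T_hf=T_hf$.
\item \emph{Adjoints.} The same computation applies with the arguments of $a$ swapped, because the term $(\mathrm i_u^k\cdot,\delta^k\cdot)$ is killed by $\delta^k\delta^{k+1}=0$ in either slot.
\end{itemize}

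\textbf{Step 2: the product term.} By Step 1,
\[
\|(T-T_h)_{/E}\|\le\sup_{v\in E,\ \|v\|_{L^2}=1}\|(\tilde T-\tilde T_h)v\|_{H^*\Lambda}\le Ch^{r_1}
\]
by \eqref{assumption:r1}. In the same way, \eqref{assumption:r2} gives $\|(T^*-T_h^*)_{/E^*}\|\le Ch^{r_2}$. The product term is therefore $O(h^{r_1+r_2})$.

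\textbf{Step 3: the consistency term.} Fix $u\in E$ and $v\in E^*$, and set $e:=(\tilde T-\tilde T_h)u\in\mathring H^*\Lambda^k(\Omega)$.
\begin{itemize}
\item Because $\tilde T^*$ is defined by testing against all of $\mathring H^*\Lambda^k(\Omega)$, we have $(e,v)=a(e,\tilde T^*v)$.
\item Galerkin orthogonality, $a(e,w_h)=0$ for all $w_h\in\Lambda_h^k$, lets us subtract $\tilde T_h^*v$. This gives
\[
|((T-T_h)u,v)|=|a(e,(\tilde T^*-\tilde T_h^*)v)|\le C\|e\|_{H^*\Lambda}\|(\tilde T^*-\tilde T_h^*)v\|_{H^*\Lambda}\le Ch^{r_1+r_2}.
\]
\end{itemize}

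\textbf{Conclusion.} Inserting Steps 2 and 3 into Lemma~\ref{lem:eigenvalue_err}, for $h<h_0$, gives a bound $Ch^{r_1+r_2}$. This is at least as strong as $Ch^{r_1+\min\{r_2,r_3\}}$ for $h\le1$.

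\textbf{Expected obstacle and fallback.} The delicate point is the identification of the constrained and unconstrained solution operators in Step 1, in particular in the discrete setting. If that identification failed for the adjoint, I would instead split $\tilde T^*v=\Pi_\delta^k\tilde T^*v+(I-\Pi_\delta^k)\tilde T^*v$ in Step 3. Only the gradient-type part would then need to be approximated in $\delta^{k+1}\Lambda_h^{k+1}$, and this is exactly where \eqref{assumption:r3} and the rate $\min\{r_2,r_3\}$ would enter.
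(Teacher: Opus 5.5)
Your Step~1 is false for the adjoint operators, and this is the point the theorem is built around. In $a(v,w)=\varepsilon(\delta^k v,\delta^k w)+(\mathrm{i}_u^k v,\delta^k w)+\nu(v,w)$, the codifferential acts only on the \emph{second} argument, in the advection term as well as in the diffusion term. Testing \eqref{tilde-T-problem} with $w=\delta^{k+1}q$ puts $\delta^{k+1}q$ in the second slot, so both terms vanish and $\tilde T$, $\tilde T_h$ preserve the kernels, as you show. For $\tilde T^*$, however, the test function sits in the first slot, and you only get
\[
(\mathrm{i}_u^k\delta^{k+1}q,\delta^k\tilde T^*f)+\nu(\delta^{k+1}q,\tilde T^*f)=(\delta^{k+1}q,f)=0 .
\]
Here $\mathrm{i}_u^k\delta^{k+1}q\neq 0$ in general; for $n=3$, $k=2$ the first term is, up to sign, $(u\times\nabla q,\nabla\times\tilde T^*f)$. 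So your claim that the advection term ``is killed by $\delta^k\delta^{k+1}=0$ in either slot'' is wrong. In general $\tilde T^*f\notin\mathbb K^k$ and $\tilde T_h^*f\notin\mathbb K_h^k$, even for $f\in E^*$, and the bound $\|(T^*-T_h^*)_{/E^*}\|\le Ch^{r_2}$ in your Step~2 is unsupported.

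What does hold is $T^*=(I-\Pi_\delta^k)\tilde T^*$ and $T_h^*=(I-\Pi_{\delta,h}^k)\tilde T_h^*$. This follows because $a(w,\delta^{k+1}q)=\nu(w,\delta^{k+1}q)=0$ for $w\in\mathbb K^k$, since now $\delta^{k+1}q$ is in the second slot. Combined with $\Pi_{\delta,h}^k=\Pi_{\delta,h}^k\Pi_\delta^k$ (nested ranges), this gives
\[
\|(T^*-T_h^*)v\|_{L^2}\le 2\|(\tilde T^*-\tilde T_h^*)v\|_{L^2}+\|(I-\Pi_{\delta,h}^k)\Pi_\delta^k\tilde T^*v\|_{L^2}\le Ch^{\min\{r_2,r_3\}} .
\]
Hence the product term in Lemma~\ref{lem:eigenvalue_err} is $O(h^{r_1+\min\{r_2,r_3\}})$. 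This is exactly where \eqref{assumption:r3} enters, and it is the paper's argument.

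Your Step~3 is correct as written and needs no fallback: $(e,v)=a(e,\tilde T^*v)$ holds for every $v$, because $\tilde T^*$ is tested against all of $\mathring{H}^*\Lambda^k(\Omega)$. The projection splitting you proposed therefore belongs in the adjoint factor of the product term, not in the consistency term. Your primal identification ($Tf=\tilde Tf$ and $T_hf=\tilde T_hf$ on $\mathbb K^k$) and your consistency estimate coincide with the paper's.

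Finally, you obtained a rate strictly better than the one stated, which would make \eqref{assumption:r3} superfluous. That should have signalled that something in the adjoint step was off.
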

\begin{proof}
\emph{First}, we estimate the term
\begin{equation*}
    \|(T-T_h)_{/E}\|= \sup_{\substack{ v\in E\\ \| v\|_{L^2}=1}}\|(T-T_h) v\|_{L^2}.
\end{equation*}
For any $ f\in \mathbb K^k$, the solution $T f = \tilde{T} f$, i.e.,
\begin{equation*}
    a(T f, w) = ( f, w),\quad \forall  w\in \mathring{H}^*\Lambda^{k}(\Omega),
\end{equation*}
since for the problem \eqref{tilde-T-problem}, $d$-free right-hand sides imply $d$-free solutions. Likewise, $T_h f= \tilde{T}_h  f$. Then $(T-T_h) u = (\tilde{T}-\tilde{T}_h) u$ for any $ u\in E$ and
\begin{equation*}
       \sup_{\substack{ v\in E\\ \| v\|_{L^2}=1}}\|(T-T_h) v\|_{L^2}\leq  \sup_{\substack{ v\in E\\ \| v\|_{L^2} =1}} \| (T - T_h  )v\|_{H^*\Lambda}\leq Ch^{r_{1}}.
\end{equation*}

\emph{Second},  consider the term
\begin{equation*}
    \|(T^*-T_h^*)_{/E^*}\|:= \sup_{ \substack{v\in E^*\\ \| v\|_{L^2}=1}}\|(T^*-T_h^*) v\|_{L^2}.
\end{equation*}
Recall that $T^* f$ satisfies
\begin{equation*}
    a( w,T^* f)  =( w, f),\quad \forall  w \in \mathbb{K}^k,
\end{equation*}
and
\begin{equation*}
    a( w,\tilde{T}^* f) = ( w, f),\quad \forall  w\in \mathring{H}^*\Lambda^{k}(\Omega).
\end{equation*}
Set $\tilde{ u} = \tilde{T}^* f-\Pi_{\delta}^k \tilde{T}^* f \in \mathbb{K}^k$, since $a(w,\delta^{k+1}q) = \nu(w,\delta^{k+1}q) = 0$ holds for any $w\in \mathbb{K}^k$ and $q\in \mathring{H}^*\Lambda^{k+1}(\Omega)$, we have
\begin{equation*}
    a( w,\tilde{ u})  = a( w,\tilde{T}^* f) - a(w,\Pi_{\delta}^k \tilde{T}^* f)
     = (w,f),\quad \forall  w\in\mathbb{K}^k.
\end{equation*}
This implies that for any $ f\in L^2\Lambda^k(\Omega)$
\begin{equation*}
    T^* f= \tilde{T}^* f-\Pi_{\delta}^k \tilde{T}^* f,
\end{equation*}
and, by a similar argument
\begin{equation*}
    T^*_h f= \tilde{T}^*_h f-\Pi_{\delta,h}^k \tilde{T}^*_h f.
\end{equation*}
Therefore,
\begin{align*}
    \|T^* v-T_h^* v\|_{L^2}&\leq \|\tilde{T}^* v-\tilde{T}^*_h v\|_{L^2} + \|\Pi_{\delta}^k \tilde{T}^* v-\Pi_{\delta,h}^k \tilde{T}^*_h v\|_{L^2}\\
    &\leq \|\tilde{T}^* v-\tilde{T}^*_h v\|_{L^2} + \|\Pi_{\delta}^k \tilde{T}^* v -\Pi_{\delta,h}^k \tilde{T}^* v\|_{L^2} + \|\Pi_{\delta,h}^k (\tilde{T}^* v - \tilde{T}^*_h v)\|_{L^2}\\
    &\leq 2\|\tilde{T}^* v-\tilde{T}^*_h v\|_{L^2}+ \|(I-\Pi_{\delta,h}^k)\Pi_{\delta}^k \tilde{T}^* v\|_{L^2},
\end{align*}
which implies that
\begin{equation*}
        \sup_{\substack{ v\in E^*\\ \| v\|_{L^2} =1}} \| (T^*   - T_h^*)  v\|_{L^2}\leq Ch^{\min\{r_{2}, r_{3}\}}.
    \end{equation*}

\emph{Finally}, we need to estimate the term
\begin{equation*}
    \sup_{\substack{ u\in E\\ \| u\|_{L^2} =1}}\sup_{\substack{ v\in E^*\\ \| v\|_{L^2} =1}}| ((T-T_h) u, v)|.
\end{equation*}
Note that
\begin{align*}
    | ((T-T_h) u, v)| &= |a((T-T_h) u,\tilde{T}^* v)| = |a(\tilde{T} u-\tilde{T}_h u,\tilde{T}^* v - \tilde{T}^*_h v)|\\
    &\leq C\|\tilde{T} u-\tilde{T}_h u\|_{H^*\Lambda}\|\tilde{T}^* v - \tilde{T}^*_h v\|_{H^*\Lambda},
\end{align*}
where we used $(T-T_h) u = (\tilde{T}-\tilde{T}_h) u$ for any $ u\in E$ and
\begin{equation*}
    a((\tilde{T}-\tilde{T}_h) u, v_h)= 0,\quad \forall  v_h\in \Lambda_h^k.
\end{equation*}
Hence
\begin{equation*}
    \sup_{\substack{ u\in E\\ \| u\|_{L^2} =1}}\sup_{\substack{ v\in E^*\\ \| v\|_{L^2} =1}}| ((T-T_h) u, v)|\leq Ch^{{r_{1}+\min\{r_{2}, r_{3}\}}}.
\end{equation*}
The result now follows from Lemma \ref{lem:eigenvalue_err}.
\end{proof}
The above theorem and its proof highlight the difference between the non-normal advection-diffusion problems studied in this paper and the self-adjoint Hodge Laplacian problems. In particular, the assumption \eqref{assumption:r2} involves estimates of the best approximation to the adjoint problem, which in general is not a direct consequence of the assumption \eqref{assumption:r1} for the original problem. Similarly, an additional assumption \eqref{assumption:r3} involves the adjoint problem. For self-adjoint problems such as the Maxwell equations, \eqref{assumption:r2} and \eqref{assumption:r3} can be dropped. However, due to the non-normality of our problem, these two conditions are required.

\begin{remark}[Alternative formulations]\label{rmk:alternative-formulation}
The Maxwell eigenvalue problem
\begin{eqnarray}
\label{curl-curl}
\nabla\times \nabla\times \bm B&=\lambda\bm B, \
\nabla\cdot \bm B&=0,
\end{eqnarray}
admits several equivalent formulations. For example, one may apply an additional curl operator and solve for $\bm j=\nabla\times \bm B$, which satisfies the same eigenvalue problem as $\bm B$. Alternatively, one may introduce a magnetic potential $\bm A$ satisfying $\bm B=\nabla\times  \bm A$ (ignoring harmonic forms) and impose the gauge condition $\nabla\cdot \bm A=0$. Then $\bm A$ also satisfies the $\curl$–$\curl$ system \eqref{curl-curl}. These formulations, together with their finite element discretizations based on de~Rham complexes, are all equivalent \cite{2010Boffi}. This equivalence extends more generally to half Hodge–Laplacian problems.

It is well known that the MHD eigenvalue (or source) problem can be reformulated in terms of a vector potential with an appropriate gauge condition \cite{heumann2012fully}. More precisely, the evolution problem \eqref{eq:MHD} can be rewritten as
\begin{eqnarray}\label{A-formulation}
\bm A_t-\bm u\times \nabla\times \bm A+ R_m^{-1}\nabla\times \nabla\times \bm A+\nabla p&=0, \
\nabla\cdot \bm A&=0.
\end{eqnarray}
Here, $p$ acts as a Lagrange multiplier enforcing the divergence-free gauge condition on $\bm A$, and \eqref{A-formulation} is precisely a half-Hodge formulation of advection–diffusion for the 1-form $\bm A$. However, for the MHD eigenvalue problem, and more generally for advection–diffusion problems of differential forms, the equivalence between discrete formulations can be lost. First, the cohomology information is not preserved when one applies an exterior derivative or passes to a potential formulation. Second, the Lie derivative of a finite element differential form need not belong to the finite element space. Therefore suitable projections are required in order to obtain equivalent finite element formulations.

\end{remark}

\section{Full advection-diffusion eigenvalue problem} \label{sec:convergence2}
In the previous section, we studied the advection--diffusion eigenvalue problem involving the half Hodge Laplacian.
This framework finds an important application in MHD, where it corresponds to the evolution of 2-forms.
However, certain relevant cases fall outside that setting, for instance, the 0-form case or the augmented MHD formulation
that incorporates the term~$-\grad \div \bm B$.
In the present section, we turn to the advection--diffusion eigenvalue problem with the \emph{full} Hodge Laplacian.
As we shall see, this problem differs in several essential ways from the problem analyzed earlier.

\subsection{Continuous and discrete formulations}
Consider the full Hodge Laplacian advection-diffusion eigenvalue problem: Find a $k$-form $ v$ and an eigenvalue $\lambda\in \mathbb C$ such that
\begin{equation*}
    \varepsilon(d^{k-1}\delta^k+\delta^{k+1}d^k) v + L_u v = \lambda v
\end{equation*}
with a constant $\varepsilon>0$.

The weak formulation is to find $ v\in  H\Lambda^k(\Omega)\cap \mathring{H}^*\Lambda^{k}(\Omega) $ and $\lambda \in \mathbb C$ such that
\begin{equation*}
    \varepsilon (\delta^k v,\delta^k w) + \varepsilon (d^k  v,d^k w) + (\mathrm{i}_u^{k+1}d^k v, w) + (\mathrm{i}_u^k v,\delta^k w) = \lambda( v, w),\quad \forall  w\in  H\Lambda^k(\Omega)\cap \mathring{H}^*\Lambda^{k}(\Omega).
\end{equation*}
This can be viewed as a generalization of the Hodge Laplacian eigenvalue problem (see \cite[Section~8]{2006ArnoldFalkWinther}, \cite[Section~17]{2010Boffi}). As in the case of Hodge Laplacian, the primal formulation is not well-suited for numerical approximation. Therefore, we consider a mixed formulation, which can be regarded as an extension of the Kikuchi formulation (cf. \cite{1987Kikuchi}, \cite[(17.2)]{2010Boffi}).

Define $ p = d^k v$. The mixed formulation of the eigenvalue problem is as follows: Find $ v\in \mathring{H}^*\Lambda^{k}(\Omega)$, $ p\in \mathring{H}^*\Lambda^{k+1}(\Omega)$ with $v\neq 0$ and $\lambda\in\mathbb C$ such that
\begin{equation}\label{eq:Hodge:mixed}
    \begin{aligned}
        \varepsilon (\delta^k v,\delta^k w) + \varepsilon(\delta^{k+1} p, w)+ (\mathrm{i}_u^{k+1} p, w) + (\mathrm{i}_u^k v,\delta^k w) &= \lambda( v, w),\quad \forall  w\in \mathring{H}^*\Lambda^{k}(\Omega),\\
    ( p, q) - ( v,\delta^{k+1} q) &=0,\quad \forall  q\in \mathring{H}^*\Lambda^{k+1}(\Omega).
    \end{aligned}
\end{equation}
The discrete version is as follows: Find $ v_h\in \Lambda_h^k$, $ p_h\in \Lambda_h^{k+1}$ with $v_h\neq 0$ and $\lambda_h\in \mathbb C$ such that
\begin{equation}\label{eq:Hodge:mixed:dis}
    \begin{aligned}
    \varepsilon (\delta^k v_h,\delta^k w_h) + \varepsilon(\delta^{k+1} p_h, w_h)+ (\mathrm{i}_u^{k+1} p_h, w_h) + (\mathrm{i}_u^k v_h,\delta^k w_h) &= \lambda_h( v_h, w_h),\quad \forall  w_h\in \Lambda_h^k,\\
    ( p_h, q_h) - ( v_h,\delta^{k+1} q_h) &=0,\quad \forall  q_h\in \Lambda_{h}^{k+1}.
    \end{aligned}
\end{equation}

\subsection{Approximation and convergence analysis}
In this section, we establish the approximation and convergence properties of the finite element discretization
for the Hodge Laplacian advection-diffusion eigenvalue problem.
In contrast to the half Hodge Laplacian advection-diffusion eigenvalue problem~\eqref{eq:gen:mixed}, the discrete compactness property (\Cref{lem:str_dis_com}) cannot be applied directly here, since the eigenfunctions $v$ of~\eqref{eq:Hodge:mixed} need not be $d$-free. Instead, we shall prove below that this mixed formulation \eqref{eq:Hodge:mixed} satisfies a discrete inf--sup condition, which plays a central role in controlling the approximation error.

Adding the shift term $\nu(v,w)$ with $\nu > 0$ (to be chosen appropriately below),
we define the following sesquilinear form
$
A \colon
X
\times
X
\to \mathbb{C}
$
by
\begin{align*}
    A(( v, p),( w, q)) &= \varepsilon (\delta^k v,\delta^k w) + \varepsilon(\delta^{k+1} p, w)+ (\mathrm{i}_u^{k+1} p, w) + (\mathrm{i}_u^k v,\delta^k w)\\
    & \ \ \ \ \ \ +( p, q) - ( v,\delta^{k+1} q) +\nu ( v, w),
\end{align*}
where
$$
X:= \mathring{H}^*\Lambda^k(\Omega) \times \mathring{H}^*\Lambda^{k+1}(\Omega).
$$

 Below we use the Babu\v{s}ka-Brezzi theory to establish the well-posedness of the source problem and define the solution operator, which is a necessary ingredient of the convergence analysis of the discrete eigenvalue problem. The boundedness of the sesquilinear forms is straightforward under boundedness assumptions on $u$. Thus, the well-posedness of the solution operator boils down to the following inf-sup condition.
\begin{lemma}[Inf-sup conditions]\label{lem:infsup}
For any sufficiently large shift
$\nu \geq 4\varepsilon + \frac{1}{2}\|u\|_{L^\infty(\Omega)} + \varepsilon^{-1}{\|u\|_{L^\infty(\Omega)}^2} + 1 > 0$,
there exist constants
$\alpha = \min\bigl\{\frac{\varepsilon}{4},\,\frac{1}{2}\bigr\} > 0$
and $C > 0$ such that,
for every $(v,p) \in X$,
there exists $(w,q) \in X$ satisfying
    \begin{align*}
        |A(( v, p),( w, q))| \geq \alpha (\| v\|_{H^*\Lambda} +\| p\|_{H^*\Lambda})^2,
    \end{align*}
    and
    \begin{equation*}
        (\| w\|_{H^*\Lambda} +\| q\|_{H^*\Lambda})\leq C (\| v\|_{H^*\Lambda} +\| p\|_{H^*\Lambda});
    \end{equation*}
    and for any $( w, q) \in X$ with $(w,q)\neq 0$, there exists $( v, p) \in X$ such that
        \begin{align*}
        |A(( v, p),( w, q))| > 0.
    \end{align*}
\end{lemma}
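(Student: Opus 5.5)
I would construct the test pair $(w,q)$ explicitly as a combination of the trial pair, in the spirit of the Kikuchi/Hodge-Laplacian inf-sup proofs. The natural first choice is $w=v$, $q=\gamma p-\beta\delta^{k+1}p$ style corrections, since $A((v,p),(v,p))$ alone produces the terms $\varepsilon\|\delta^k v\|^2+\nu\|v\|^2+\|p\|^2$. It also produces the cross terms $\varepsilon(\delta^{k+1}p,v)-(v,\delta^{k+1}p)$ and the advection terms $(\mathrm{i}_u p,v)$ and $(\mathrm{i}_u v,\delta^k v)$. What it does not control is $\|\delta^{k+1}p\|$.

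To recover $\|\delta^{k+1}p\|$ I would add $w=v+t\,\delta^{k+1}p$ with a small parameter $t>0$. This is admissible because $\delta^{k+1}p\in\mathring H^*\Lambda^k$, as $\delta\delta=0$ and the boundary condition is preserved. Then $\varepsilon(\delta^{k+1}p,w)$ contributes $t\varepsilon\|\delta^{k+1}p\|^2$. The term $\delta^k w=\delta^k v$ is unchanged, so no new diffusion cross terms appear. The remaining new terms are $(\mathrm{i}_u p,t\delta p)$, $\nu t(v,\delta p)$ and $t\varepsilon(\delta p, v)$, and these are bounded by Young's inequality using $\|\mathrm{i}_u p\|\le \|u\|_{L^\infty}\|p\|$.

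The cross term $\varepsilon(\delta p,v)-(v,\delta p)$ from $w=v,q=p$ is awkward because its coefficients differ. I would fix this by choosing $q=\varepsilon p$ rather than $p$. Then $-(v,\delta^{k+1}q)=-\varepsilon(v,\delta p)$ is the conjugate of $\varepsilon(\delta p,v)$, so the sum is purely imaginary. Taking the real part of $A$ then removes it entirely, and I would work with $\Re A$ throughout. So the candidate is $(w,q)=(v+t\delta^{k+1}p,\ \varepsilon p)$, which clearly satisfies $\|w\|+\|q\|\le C(\|v\|+\|p\|)$.

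For the estimate, $\Re A$ contains $\varepsilon\|\delta v\|^2+\nu\|v\|^2+\varepsilon\|p\|^2+t\varepsilon\|\delta p\|^2$. From this we subtract $\|u\|_\infty\|p\|\|v\|$, $\|u\|_\infty\|v\|\|\delta v\|$, $t\|u\|_\infty\|p\|\|\delta p\|$ and $t(\nu+\varepsilon)\|v\|\|\delta p\|$. Each of these is absorbed by Young's inequality, splitting half of the positive terms and using the large $\nu$, which is where the lower bound $\nu\ge 4\varepsilon+\tfrac12\|u\|+\varepsilon^{-1}\|u\|^2+1$ enters. The delicate point, and the main obstacle, is the term $t\nu\|v\|\|\delta p\|$. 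It requires roughly $t\nu^2\lesssim \nu\varepsilon$, that is $t\sim \varepsilon/\nu$, which makes the constant in front of $\|\delta p\|^2$ depend on $\nu$. If that spoils the stated $\alpha=\min\{\varepsilon/4,1/2\}$, I would rescale $w=v+t\delta p$ and $q=\varepsilon p+s\,p$ differently, or redistribute constants, accepting that $C$ absorbs the dependence. I expect the authors to normalize so that $\alpha$ comes out as stated. Finally, $(\|v\|+\|p\|)^2\le 2(\cdots)$ converts the sum of squares into the squared sum.

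For the second (adjoint) condition, given $(w,q)\neq 0$ I would argue symmetrically. Choose $(v,p)=(w-t\delta q,\ \varepsilon q)$ or similar, with the roles of the advection terms transposed, and show $\Re A>0$ by the same Young argument. Alternatively, I would note that in the Babuška–Brezzi setting the first condition plus injectivity of the adjoint follows from the same coercivity-type estimate applied to the adjoint form, whose structure is identical up to conjugation.
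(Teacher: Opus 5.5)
\textbf{First condition.} Your first half can be completed, but the ``main obstacle'' you identify comes from taking $q=\varepsilon p$, and you miss the device the paper relies on. The paper takes $(w,q)=(v+\delta^{k+1}p,\,(\varepsilon+\nu)p)$, i.e.\ $t=1$ with the shift built into $q$. Then $-(v,\delta^{k+1}q)+\nu(v,\delta^{k+1}p)=-\varepsilon(v,\delta^{k+1}p)$, so the shift-generated term cancels identically. The only surviving coupling is $\varepsilon[(\delta^{k+1}p,v)-(v,\delta^{k+1}p)]$, and the full $\varepsilon\|\delta^{k+1}p\|^2$ is kept. Moreover $(p,q)=(\varepsilon+\nu)\|p\|^2$ absorbs the contributions $\tfrac12\|u\|_{L^\infty}\|p\|^2$ and $\varepsilon^{-1}\|u\|_{L^\infty}^2\|p\|^2$ of the advection terms. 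This is how $\alpha=\min\{\varepsilon/4,1/2\}$ comes out independent of $\nu$, at the price of $C\sim 1+\varepsilon+\nu$.

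With your choice, absorbing $t\nu(v,\delta^{k+1}p)$ forces $t\lesssim\varepsilon/\nu$. The coefficient of $\|\delta^{k+1}p\|^2$ is then only of order $\varepsilon^2/\nu$. Since your $\|p\|^2$ coefficient is only $\varepsilon$, the Young splits must also be tuned to stay within the stated range of $\nu$. You can still reach the literal statement, but you must give the reason rather than expect a normalization. Replacing $(w,q)$ by $\lambda(w,q)$ multiplies both $|A|$ and the bound on $\|w\|_{H^*\Lambda}+\|q\|_{H^*\Lambda}$ by $\lambda$. Hence any positive constant can be traded for the stated $\alpha$ by enlarging $C$; for small $\varepsilon$ your route still loses a factor $\varepsilon$ in $\alpha/C$.

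A small correction: $w=v+t\delta^{k+1}p$ produces no term $t\varepsilon(\delta^{k+1}p,v)$, since $\varepsilon(\delta^{k+1}p,w)=\varepsilon(\delta^{k+1}p,v)+t\varepsilon\|\delta^{k+1}p\|^2$. Your idea of estimating $\Re A$ is a genuine improvement. Combined with the paper's test pair, it removes the purely imaginary coupling exactly, so the $4\varepsilon$ in the lower bound on $\nu$ becomes unnecessary.

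\textbf{Second condition: the genuine gap.} Your pair $(v,p)=(w-t\delta^{k+1}q,\,\varepsilon q)$ does not reproduce the cancellation. The couplings enter as $-(w,\delta^{k+1}q)$ and $\varepsilon^2(\delta^{k+1}q,w)$, and the shift adds $-t\nu(\delta^{k+1}q,w)$. Already for $u=0$ you get
$\Re A=\varepsilon\|\delta^kw\|^2+t\|\delta^{k+1}q\|^2+\nu\|w\|^2+\varepsilon\|q\|^2-(1+t\nu-\varepsilon^2)\Re(w,\delta^{k+1}q)$.
This is positive for all $(w,q)\neq0$ only if $(1+t\nu-\varepsilon^2)^2\le 4t\nu$, i.e.\ $t\nu\in[(1-\varepsilon)^2,(1+\varepsilon)^2]$. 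Suppose $t\nu<(1-\varepsilon)^2$, as for the $t\sim\varepsilon/\nu$ of your first half when $\varepsilon$ is small. Take $w=c\,\delta^{k+1}q$ with suitable real $c$ and $\|\delta^{k+1}q\|_{L^2}\gg\|q\|_{L^2}$. Then $A((v,p),(w,q))=0$ with $(w,q)\neq0$.

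Your fallback is not valid either. $A$ is not Hermitian: in the adjoint the two couplings carry swapped coefficients ($\varepsilon$ and $1$), and the advection acts on the test pair. That is exactly why this condition is listed separately. Your idea can be repaired with $p=\varepsilon^{-1}(1+t\nu)q$, which makes the combined coupling purely imaginary.

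The paper needs only nondegeneracy and argues by cases. If $w\neq\nu^{-1}\delta^{k+1}q$, take $(v,p)=(w-\nu^{-1}\delta^{k+1}q,0)$. Then $\delta^kv=\delta^kw$ and $-(v,\delta^{k+1}q)+\nu(v,w)=\nu\|v\|^2$, so $A=\varepsilon\|\delta^kw\|^2+(\mathrm{i}_u^kv,\delta^kw)+\nu\|v\|^2>0$ by Lemma~\ref{lem:coercivity}. Otherwise $q\neq0$, and $(v,p)=(0,\nu q)$ gives $A=\varepsilon\|\delta^{k+1}q\|^2+(\mathrm{i}_u^{k+1}q,\delta^{k+1}q)+\nu\|q\|^2>0$, again by Lemma~\ref{lem:coercivity}.
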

\begin{proof}
    For any $( v, p) \in X$, choose $( w, q) = ( v+\delta^{k+1} p,(\varepsilon+\nu) p)$. Then we have
    \begin{align*}
        A(( v, p),( w, q)) & = \varepsilon(\delta^k v,\delta^k v) + \varepsilon(\delta^{k+1} p,\delta^{k+1} p) + \varepsilon ((\delta^{k+1} p, v) - ( v,\delta^{k+1} p))\\ & \ \ \ \ \ \ \ + (\mathrm{i}_u^{k+1} p, v) + (\mathrm{i}_u^{k+1}p,\delta^{k+1} p)+ (\mathrm{i}_u^k v,\delta^k v)   + (\varepsilon+\nu)( p, p) + \nu ( v, v).
        \end{align*}
Under the assumption $u\in L^{\infty}$, we get
        \begin{equation*}
            |\varepsilon ((\delta^{k+1} p, v) - ( v,\delta^{k+1} p))|\leq \frac{\varepsilon}{4}\|\delta^{k+1} p\|_{L^2}^2 + 4\varepsilon\| v\|_{L^2}^2,
        \end{equation*}
        \begin{equation*}
            |(\mathrm{i}_u^{k+1} p, v)|\leq \frac{1}{2}\|u\|_{L^{\infty}}(\| p\|_{L^2}^2 + \| v\|_{L^2}^2),
        \end{equation*}
        \begin{equation*}
            |(\mathrm{i}_u^{k+1} p,\delta^{k+1} p)+ (\mathrm{i}_u^k v,\delta^k v)|\leq \frac{\varepsilon}{4}(\|\delta^{k+1} p\|_{L^2}^2 + \|\delta^{k} v\|_{L^2}^2) + \frac{\|u\|_{L^{\infty}}^2}{\varepsilon}(\| p\|_{L^2}^2 + \| v\|_{L^2}^2).
        \end{equation*}
Choose $\nu \geq 4\varepsilon + \frac{1}{2}\|u\|_{L^{\infty}} + \varepsilon^{-1}{\|u\|_{L^{\infty}}^2} + 1>0$. There exist positive constants $\alpha>0$ and $C>0$ such that
        \begin{align*}
            |A(( v, p),( w, q))|&\geq |\varepsilon(\delta^k v,\delta^k v) + \varepsilon(\delta^{k+1} p,\delta^{k+1} p)+ (\varepsilon+\nu)( p, p) + \nu ( v, v)|\\
            & \ \ \ \ \ \ \ - | \varepsilon ((\delta^{k+1} p, v) - ( v,\delta^{k+1} p)) + (\mathrm{i}_u^{k+1} p, v) + (\mathrm{i}_u^{k+1} p,\delta^{k+1} p)+ (\mathrm{i}_u^k v,\delta^k v)|\\
            &\geq \alpha (\| v\|_{H^*\Lambda} +\| p\|_{H^*\Lambda})^2,
        \end{align*}
and
    \begin{equation*}
        \| w\|_{H^*\Lambda} +\| q\|_{H^*\Lambda}\leq C (\| v\|_{H^*\Lambda} +\| p\|_{H^*\Lambda}).
    \end{equation*}
    Next, for any $( w, q) \in X$, if $w-{\nu^{-1}}\delta^{k+1}q\neq 0$, choose $(v,p) = (w-\nu^{-1}\delta^{k+1}q,0)$ such that
    \begin{align*}
        |A(( v, p),( w, q))| = |\varepsilon(\delta^kw,\delta^kw) +(\mathrm{i}_u^kv,\delta^kw) + \nu(v,v)|\geq \alpha (\|\delta^kw\|_{L^2}+ \|w-\frac{1}{\nu}\delta^{k+1}q\|_{L^2})^2>0.
    \end{align*}
    Otherwise, choose $(v,p) = (0,\nu q)$ such that
    \begin{align*}
        |A(( v, p),( w, q))| = |\varepsilon(\delta^{k+1}q,\delta^{k+1}q) + (\mathrm{i}_u^{k+1}q,\delta^{k+1}q)+\nu(q,q)|\geq\alpha\|q\|_{H^*\Lambda}^2> 0,
    \end{align*}
    which completes the proof.
\end{proof}
By the inf-sup condition, we can define the solution operator $T:L^2\Lambda^k(\Omega)\rightarrow \mathring{H}^*\Lambda^{k}(\Omega)$ as
\begin{subequations}
\begin{align}
    \varepsilon (\delta^k T f,\delta^k w) + \varepsilon(\delta^{k+1} p, w)+ (\mathrm{i}_u^{k+1} p, w) + (\mathrm{i}_u^kT f,\delta^k w) + \nu (T f, w)&= ( f, w),\quad \forall  w\in \mathring{H}^*\Lambda^{k}(\Omega),\\\label{eqn:p}
    ( p, q) - (T f,\delta^{k+1} q) &=0,\quad \forall  q\in \mathring{H}^*\Lambda^{k+1}(\Omega),
\end{align}
\end{subequations}
with some $ p\in \mathring{H}^*\Lambda^{k+1}(\Omega)$.
It follows from \eqref{eqn:p} that $p=d^kTf\in L^2\Lambda^{k+1}(\Omega)$. Consequently, $Tf\in H\Lambda^k(\Omega)\cap \mathring{H}^*\Lambda^{k}(\Omega) $ and satisfies, for any $w\in  H\Lambda^k(\Omega)\cap \mathring{H}^*\Lambda^{k}(\Omega)$,
\begin{equation*}
    \varepsilon (\delta^kTf,\delta^k w) + \varepsilon (d^k  Tf,d^k w) + (\mathrm{i}_u^{k+1}d^k Tf, w) + (\mathrm{i}_u^{k} Tf,\delta^k w) +\nu (Tf,w) = ( f, w).
\end{equation*}
Hence, $T:L^2\Lambda^k(\Omega)\rightarrow L^2\Lambda^k(\Omega)$ is compact. Similarly, we can define the discrete solution operator  $T_h:L^2\Lambda^k(\Omega)\rightarrow \Lambda^k_h$ as
\begin{equation}\label{eq:fullhodge_dis_mix}
    \begin{aligned}
    \varepsilon (\delta^k T_h f,\delta^k w) + \varepsilon(\delta^{k+1} p_h, w)+ (\mathrm{i}_u^{k+1} p_h, w) + (\mathrm{i}_u^kT_h f,\delta^k w) + \nu (T_h f, w)&= ( f, w),\quad \forall  w\in \Lambda^k_h,\\
    ( p_h, q) - (T_h f,\delta^{k+1} q) &=0,\quad \forall  q\in\Lambda_h^{k+1},
\end{aligned}
\end{equation}
with some $ p_h\in \Lambda^{k+1}_h(\Omega)$. Define the discrete differential operator $d_h^k:L^{2}\Lambda^k(\Omega)\rightarrow \Lambda_h^{k+1}$ by
\begin{equation*}
    (d_h^k v,q_{h}) = (v,\delta^{k+1}q_{h}),\quad \forall q_{h}\in\Lambda_h^{k+1}.
\end{equation*}
The second equation in \eqref{eq:fullhodge_dis_mix} implies $p_h = d_h^k T_hf$. The following discrete compactness result extends the discrete compactness result in \cite{2019HeHuXu} to domains with nontrivial topology.
\begin{lemma}\label{lem:dis_compact}
    For any sequence $\{u_{h_n}\in\Lambda_{h_n}^k\}$ with $h_n\rightarrow 0$ satisfying $\|u_{h_n}\|_{L^2}+\|\delta^ku_{h_n}\|_{L^2}+\|d_{h_n}^ku_{h_n}\|_{L^2}\leq C$, where $C$ is a positive constant, there exists a subsequence of $\{u_{h_n}\}$ which converges strongly in $L^2\Lambda^k(\Omega)$.
\end{lemma}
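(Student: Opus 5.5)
The plan is to split $u_{h_n}$ by a discrete Hodge decomposition of $\Lambda_{h_n}^k$ with respect to the complex \eqref{discrete-delta-complex}:
\[
u_{h_n} = \delta^{k+1}\phi_{h_n} + z_{h_n} + \rho_{h_n},
\]
with $\delta^{k+1}\phi_{h_n}\in\delta^{k+1}\Lambda_{h_n}^{k+1}$, harmonic part $z_{h_n}\in\H_{h_n}^k$, and $\rho_{h_n}\in\mathbb K_{h_n}^k\cap(\H_{h_n}^k)^{\perp}$, the orthogonal complement of the range. The three pieces are $L^2$-orthogonal, so each is bounded in $L^2$ by $C$. Compactness will then be proved piece by piece.

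\textbf{Second and third pieces.} $\delta^k$ annihilates both $\delta^{k+1}\phi_{h_n}$ and $z_{h_n}$, so $\delta^k\rho_{h_n}=\delta^k u_{h_n}$ is bounded. Hence $z_{h_n}+\rho_{h_n}\in\mathbb K_{h_n}^k$ is uniformly bounded in $H^*\Lambda^k$. Lemma~\ref{lem:str_dis_com} then gives a subsequence converging strongly in $L^2$; that lemma already handles the harmonic part through Lemma~\ref{lem:discrete_har:appro}.

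\textbf{First piece.} Write $\xi_{h_n}:=\delta^{k+1}\phi_{h_n}$. The discrete kernel $\mathbb K_{h_n}^k$ is exactly the kernel of $d_{h_n}^k$, so $d_{h_n}^k u_{h_n}=d_{h_n}^k\xi_{h_n}$ is bounded. By choosing $\phi_{h_n}$ orthogonal to $\ker\delta^{k+1}\cap\Lambda_{h_n}^{k+1}$ we may regard $\phi_{h_n}=d_{h_n}^k\xi_{h_n}$ up to the discrete Laplacian structure. The plan is the duality argument of the Maxwell discrete compactness proof in \cite{2019HeHuXu}, carried out for the complex \eqref{discrete-delta-complex} shifted up one degree:
\begin{enumerate}
\item Set $g_n:=d_{h_n}^k\xi_{h_n}\in\Lambda_{h_n}^{k+1}$. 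It lies in the discrete kernel at level $k+1$, i.e.\ it is orthogonal to $\delta^{k+2}\Lambda^{k+2}_{h_n}$, and it is bounded in $L^2$.
\item Let $\phi_{h_n}\in\Lambda_{h_n}^{k+1}\cap(\ker\delta^{k+1})^{\perp}$ solve the discrete problem
\[
(\delta^{k+1}\phi_{h_n},\delta^{k+1}\psi)=(g_n,\psi)\qquad\forall\,\psi\in\Lambda_{h_n}^{k+1}\cap(\ker\delta^{k+1})^{\perp},
\]
so that $\xi_{h_n}=\delta^{k+1}\phi_{h_n}$.
\item Let $\phi(n)$ solve the continuous counterpart with the same right-hand side $g_n$. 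By \eqref{eq:s-regular}, $\delta^{k+1}\phi(n)$ is bounded in $H^s$.
\item The commuting projections $\pi_h$ of Assumption~\ref{assump:interpolation} give, as in the proof of Lemma~\ref{lem:str_dis_com},
\[
\|\delta^{k+1}\phi(n)-\delta^{k+1}\phi_{h_n}\|_{L^2}\le Ch_n^{s}\|g_n\|_{L^2}\to0 .
\]
\item $\{\delta^{k+1}\phi(n)\}$ is bounded in $H^s$ with $s>1/2$, hence precompact in $L^2$. So a subsequence of $\xi_{h_n}$ converges strongly.
\end{enumerate}

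\textbf{Main obstacle.} Step 4 is the hard part. There $g_n$ is a discrete object, orthogonal only to discrete ranges and not to $\mathcal H^{k+1}$ or to the continuous range of $\delta^{k+2}$. This produces a consistency error that must be controlled, for instance by first replacing $g_n$ with its continuous Hodge projection and estimating the difference by Lemma~\ref{lem:discrete_har:appro} together with the bounded cochain projections. Combining the three convergent subsequences by a diagonal extraction finishes the proof.
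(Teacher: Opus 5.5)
You locate the crux correctly at Step 4, but as written it is a genuine gap, and the remedy you sketch targets the wrong quantity.

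\textbf{Why Step 4 does not follow as stated.} The argument of Lemma~\ref{lem:str_dis_com} works because the Hodge mapping is defined through the conforming operator $\delta^k$, which is the same at the continuous and discrete levels. Here the datum $g_n=d^k_{h_n}\xi_{h_n}$ comes from the discrete adjoint. Let $P$ be the $L^2$ projection onto $(\ker\delta^{k+1})^\perp$ and set $\xi(n):=\delta^{k+1}\phi(n)$. The continuous problem only sees $Pg_n$, so Galerkin orthogonality fails:
\[
(\xi(n)-\xi_{h_n},\delta^{k+1}\psi_h)=-\bigl((I-P)g_n,\psi_h\bigr)\qquad\forall\,\psi_h\in\Lambda^{k+1}_{h_n}.
\]
Estimating $(I-P)g_n$ in norm cannot work. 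For $g\in\Lambda^{k+1}_h$ orthogonal to the discrete kernel of $\delta^{k+1}$, one only has $\|(I-P)g\|_{L^2}\le Ch^s\|\delta^{k+1}g\|_{L^2}$, and $\delta^{k+1}g_n$ is not controlled by the hypotheses. Lemma~\ref{lem:discrete_har:appro} disposes only of the harmonic component of $(I-P)g_n$, which is the harmless one. The component in the range of $\delta^{k+2}$ need not be small.

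\textbf{How to repair it.} Put the smallness on the test function instead. Let $\chi_h$ be the discrete projection of $\pi_{h_n}\phi(n)-\phi_{h_n}$ orthogonal to $\ker\delta^{k+1}\cap\Lambda^{k+1}_{h_n}$, so that $\delta^{k+1}\chi_h=\pi_{h_n}\xi(n)-\xi_{h_n}$. Since $(I-P)g_n\perp P\chi_h$, you can use the estimate $\|\chi_h-P\chi_h\|_{L^2}\le\|(I-\pi_{h_n})P\chi_h\|_{L^2}$ from the proof of Lemma~\ref{lem:str_dis_com}, one degree up. Combine it with the Poincar\'e inequality and \eqref{eq:s-regular} for the $d$-closed form $P\chi_h\in\mathring H^*\Lambda^{k+1}(\Omega)$. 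This gives
\[
\bigl|\bigl((I-P)g_n,\chi_h\bigr)\bigr|\le\|g_n\|_{L^2}\|(I-\pi_{h_n})P\chi_h\|_{L^2}\le Ch_n^s\|g_n\|_{L^2}\|\delta^{k+1}\chi_h\|_{L^2}.
\]
Insert this into
\[
\|\xi(n)-\xi_{h_n}\|_{L^2}^2=(\xi(n)-\xi_{h_n},\xi(n)-\pi_{h_n}\xi(n))+(\xi(n)-\xi_{h_n},\delta^{k+1}\chi_h)
\]
and use $\|\xi(n)\|_{H^s}\le C\|g_n\|_{L^2}$. This yields $\|\xi(n)-\xi_{h_n}\|_{L^2}\le Ch_n^s\|g_n\|_{L^2}$, after which your Step 5 closes the argument. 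Separately, the aside ``$\phi_{h_n}=d^k_{h_n}\xi_{h_n}$'' is wrong. The correct relation is $d^k_{h_n}\delta^{k+1}\phi_{h_n}=g_n$, as in your Step 2.

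\textbf{Comparison with the paper.} With this repair your route is valid but differs from the paper's. The paper uses the same split $u_{h_n}=\tilde u_{h_n}+\delta^{k+1}q_{h_n}$ (its $q_{h_n}$ is your $\phi_{h_n}$) and treats the kernel part the same way. For the range part, however, it proves no error estimate. It bounds $q_{h_n}$ in $H^*\Lambda$ by the discrete Poincar\'e inequality. It then applies Lemma~\ref{lem:str_dis_com} at degree $k+1$ to get $q_{h_n}\to q$ in $L^2$. It identifies the weak limit of $d^k_{h_n}\delta^{k+1}q_{h_n}$ as $d^k\delta^{k+1}q$ by testing against $\pi^{k+1}_{h_n}w$ for smooth compactly supported $w$. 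Finally it upgrades weak to strong convergence via
\[
\|\delta^{k+1}q_{h_n}\|^2=(d^k_{h_n}\delta^{k+1}q_{h_n},q_{h_n})\to(d^k\delta^{k+1}q,q)=\|\delta^{k+1}q\|^2,
\]
a weak-times-strong limit. The paper's argument is shorter and purely qualitative. Your repaired argument costs a consistency estimate but delivers an explicit rate $h_n^s$ for the range component.
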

\begin{proof}
    There exists an $L^2$-orthogonal decomposition $u_{h_n} = \tilde{u}_{h_n} + \delta^{k+1}q_{h_n}$ with $\tilde{u}_{h_n}\in \mathbb{K}^k_{h_n}$ and $q_{h_n}\in \mathbb{K}_{h_n}^{k+1}\cap (\mathcal{H}_{h_n}^{k+1})^{\perp}$. Moreover, $\|\tilde{u}_{h_n}\|_{H^*\Lambda}\leq \|u_{h_n}\|_{H^*\Lambda}$. The definition \eqref{def:Kh} implies that any element $ \tilde{u}_{h_n}\in  \mathbb{K}^k_{h_n}$ satisfies $d_{h_n}^{k} \tilde{u}_{h_n}=0$. This further implies
        \begin{equation}\label{bound:qh}
    \|\delta^{k+1}q_{h_n}\|_{L^2}+\|d_{h_n}^k\delta^{k+1}q_{h_n}\|_{L^2}\leq \|u_{h_n}\|_{L^2}+\|d_{h_n}^ku_{h_n}\|_{L^2}.
        \end{equation}
    From Lemma \ref{lem:str_dis_com}, there exists a subsequence (still denoted by $\{\tilde{u}_{h_n}\}$)
    \begin{equation*}
        \tilde{u}_{h_n}\rightarrow \tilde{u}\quad\text{strongly in }L^2.
    \end{equation*}
    From Lemma \ref{lem:str_dis_com}, the bound \eqref{bound:qh} and the discrete Poincar\'{e} inequality (cf. \cite[Theorem~5.11]{2006ArnoldFalkWinther}), there exists a subsequence, still denoted by $\{q_{h_n}\}$, such that
    \begin{equation*}
        q_{h_n}\rightarrow q\quad\text{strongly in }L^2,\qquad  q_{h_n}\rightharpoonup q\quad\text{weakly in }H^*\Lambda,\qquad d_{h_n}^{k}\delta^{k+1}q_{h_n}\rightharpoonup g\quad\text{weakly in }L^2.
    \end{equation*}
    For any smooth $(k+1)$-form $w\in C_0^{\infty}\Lambda^{k+1}(\Omega)$, note that
    \begin{equation*}
        \pi_{h_n}^{k+1}w\rightarrow w\quad\text{strongly in }L^2,\qquad \pi_{h_n}^{k}\delta^{k+1}w\rightarrow \delta^{k+1}w\quad\text{strongly in }L^2.
    \end{equation*}
Then
    \begin{align*}
        (g,w) =\lim_{n\rightarrow\infty}(d_{h_n}^{k}\delta^{k+1}q_{h_n},\pi_{h_n}^{k+1}w)
         = \lim_{n\rightarrow\infty}(\delta^{k+1}q_{h_n},\pi_{h_n}^{k}\delta^{k+1}w)
        = (\delta^{k+1}q,\delta^{k+1}w).
    \end{align*}
    Hence $g = d^k\delta^{k+1}q$, and using the vanishing boundary conditions of $q$,
    \begin{align*}
        (\delta^{k+1}q,\delta^{k+1}q) =(d^{k}\delta^{k+1}q,q) &= \lim_{n\rightarrow\infty}(d_{h_n}^{k}\delta^{k+1}q_{h_n},q_{h_n}) = \lim_{n\rightarrow\infty}(\delta^{k+1}q_{h_n},\delta^{k+1}q_{h_n}).
    \end{align*}
The above norm convergence together with the weak convergence
    \begin{equation*}
        \delta^{k+1}q_{h_n}\rightharpoonup \delta^{k+1}q\quad\text{weakly in }L^2,
    \end{equation*}
implies the strong convergence
      \begin{equation*}
        \delta^{k+1}q_{h_n}\rightarrow\delta^{k+1}q\quad\text{strongly in }L^2,
    \end{equation*}
    which completes the proof.
\end{proof}

\begin{theorem}
     Assume that $u \in L^{\infty}$ and $\nu\geq 4\varepsilon + \frac{1}{2}\|u\|_{L^{\infty}} + \varepsilon^{-1}{\|u\|_{L^{\infty}}^2} + 1>0$. We have
    \begin{equation*}
        \|T-T_h\|_{\mathcal{L}(L^2\Lambda^k(\Omega);L^2\Lambda^k(\Omega))} \rightarrow 0,\quad \text{ as }h\rightarrow 0.
    \end{equation*}
\end{theorem}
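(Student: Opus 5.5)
The argument is by contradiction, combining a discrete inf--sup stability of \eqref{eq:fullhodge_dis_mix} with the discrete compactness of Lemma~\ref{lem:dis_compact}. Suppose the claim fails. Then there exist $\gamma>0$, $h_n\to0$ and $f_n$ with $\|f_n\|_{L^2}=1$ such that $\|(T-T_{h_n})f_n\|_{L^2}\geq\gamma$.

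\textbf{Step 1: uniform discrete stability.} The inf--sup argument of Lemma~\ref{lem:infsup} transfers verbatim to $\Lambda_h^k\times\Lambda_h^{k+1}$. The test pair $(w,q)=(v_h+\delta^{k+1}p_h,(\varepsilon+\nu)p_h)$ stays in the discrete spaces because the sequence \eqref{discrete-delta-complex} is a complex. Hence the discrete problem is well posed with an $h$-independent constant, and
\[
\|T_hf\|_{H^*\Lambda}+\|p_h\|_{H^*\Lambda}\leq C\|f\|_{L^2}.
\]
Since $p_h=d_h^kT_hf$, this gives exactly the bound $\|T_{h_n}f_n\|_{L^2}+\|\delta^kT_{h_n}f_n\|_{L^2}+\|d^k_{h_n}T_{h_n}f_n\|_{L^2}\leq C$ needed in Lemma~\ref{lem:dis_compact}.

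\textbf{Step 2: extract limits.} Pass to a subsequence with $f_n\rightharpoonup f$ weakly in $L^2$. Since $T$ is compact, $Tf_n\to Tf$ strongly. By Lemma~\ref{lem:dis_compact}, $v_n:=T_{h_n}f_n\to v$ strongly in $L^2$. Because $v_n$ is bounded in $H^*\Lambda$, we may also take $v_n\rightharpoonup v$ weakly in $\mathring H^*\Lambda^k$. The multipliers $p_n$ are bounded in $H^*\Lambda^{k+1}$, so $p_n\rightharpoonup p$ weakly there.

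\textbf{Step 3: identify the limit.} The goal is $v=Tf$, which contradicts $\|Tf_n-v_n\|\geq\gamma$. Take smooth compactly supported test forms $w,q$ and test \eqref{eq:fullhodge_dis_mix} with $\pi_{h_n}^kw$ and $\pi_{h_n}^{k+1}q$. These converge in $H^*\Lambda$ by the commuting property $\delta\pi_h=\pi_h\delta$ and the approximation estimate of Assumption~\ref{assump:interpolation}. Each term is a pairing of a weakly convergent sequence with a strongly convergent one. The advection terms $(\mathrm{i}_u p_n,\pi_h w)$ and $(\mathrm{i}_u v_n,\delta\pi_h w)$ pass to the limit because $u\in L^\infty$, so $\mathrm{i}_u$ is bounded on $L^2$ and preserves weak convergence. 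In the limit, $(v,p)$ satisfies the continuous system defining $Tf$ for all smooth test pairs. By density of smooth forms in $\mathring H^*\Lambda$ it satisfies it for all test pairs, and uniqueness from Lemma~\ref{lem:infsup} gives $v=Tf$. Then $\|Tf_n-T_{h_n}f_n\|_{L^2}\to\|Tf-v\|_{L^2}=0$, which is the contradiction.

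\textbf{Main obstacle.} The delicate point is Step 1 together with the use of Lemma~\ref{lem:dis_compact}. Lemma~\ref{lem:dis_compact} needs control of $d_h^kv_n=p_n$ in $L^2$, not of $d^kv_n$, so the uniform discrete inf--sup bound must be verified carefully. The term $\varepsilon\big((\delta p,v)-(v,\delta p)\big)$ and the advection terms must be absorbed with the same $\nu$, uniformly in $h$. I expect this to go through exactly as in Lemma~\ref{lem:infsup}, since that proof used only the complex structure and Cauchy--Schwarz/Young inequalities. If density of smooth forms in Step 3 causes trouble at the boundary, I would instead test with $\pi_{h_n}$ applied to arbitrary elements of $\mathring H^*\Lambda$. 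That relies on $\pi_h w\to w$ in $H^*\Lambda$, which follows from the boundedness of $\pi_h$ and density.
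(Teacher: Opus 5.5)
Your proposal is correct and follows essentially the same route as the paper. Both arguments proceed by contradiction: a uniform discrete stability bound from the inf--sup argument of Lemma~\ref{lem:infsup}, strong $L^2$ convergence of $T_{h_n}f_n$ via Lemma~\ref{lem:dis_compact}, and identification of the limit as $Tf$ by testing with $\pi_{h_n}$ applied to smooth compactly supported forms. The differences are minor. You spell out why the inf--sup argument transfers to the discrete spaces, and you make the density/uniqueness step explicit. You also use only weak convergence of the multipliers $p_n$, which suffices, whereas the paper additionally extracts strong $L^2$ convergence of $p_j$.
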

\begin{proof}
Suppose, for contradiction, that there exists $\eta>0$, $f_j\in L^2\Lambda^k(\Omega)$ with $\|f_j\|_{L^2}=1$ and $h_j\rightarrow 0$ such that
\begin{equation}\label{contradiction}
    \|(T-T_{h_j})f_j\|_{L^2}\geq \eta,\quad \text{for }j=1,2,\cdots .
\end{equation}
We can extract a subsequence, still denoted by $f_j$, such that
\begin{equation*}
    f_j\rightharpoonup f\quad \text{weakly in }L^2.
\end{equation*}
Since $T$ is a compact operator, $Tf_j\rightarrow Tf$ strongly in $L^2$. Let $p_j\in \Lambda_{h_j}^{k+1}$, $v_j = T_{h_j}f_j\in \Lambda_{h_j}^k$ be the solution of \eqref{eq:fullhodge_dis_mix}. From Lemma \ref{lem:infsup}, we have
\begin{equation*}
    \|p_j\|_{H^*\Lambda} +\|v_j\|_{H^*\Lambda}\leq C\|f_j\|_{L^2}.
\end{equation*}
Note that
\begin{equation*}
    d_{h_j}^{k}v_j = p_j,\qquad d_{h_j}^{k+1}p_j =0.
\end{equation*}
From Lemma \ref{lem:dis_compact}, we can extract subsequences, still denoted by $\{v_j\}$ and $\{p_j\}$, such that
\begin{equation*}
    p_j\rightarrow p\quad\text{strongly in }L^2, \qquad p_j\rightharpoonup p\quad\text{weakly in }H^*\Lambda,
\end{equation*}
\begin{equation*}
   v_j\rightarrow v\quad\text{strongly in }L^2,\qquad v_j\rightharpoonup v\quad\text{weakly in }H^*\Lambda,
\end{equation*}
for some $p$ and $v$.
For any $w\in C_0^{\infty}\Lambda^k(\Omega)$, note that
\begin{equation*}
    w_j=\pi_{h_j}^kw\rightarrow w\quad\text{strongly in }H^*\Lambda.
\end{equation*}
Then
\begin{align*}
    &\varepsilon (\delta^k v,\delta^k w) + \varepsilon(\delta^{k+1}p, w)+ (\mathrm{i}_u^{k+1} p, w) + (\mathrm{i}_u^kv,\delta^k w) + \nu (v, w)\\
    &=\lim_{j\rightarrow\infty}\varepsilon (\delta^k v_j,\delta^k w_j) + \varepsilon(\delta^{k+1} p_j, w_j)+ (\mathrm{i}_u^{k+1} p_j, w_j) + (\mathrm{i}_u^kv_j,\delta^k w_j) + \nu (v_j, w_j)\\
    & = \lim_{j\rightarrow \infty}(f_j,w_j) = (f,w).
\end{align*}
Similarly, for any $q\in C_0^{\infty}\Lambda^{k+1}(\Omega)$, we obtain
\begin{equation*}
    (p,q)-(v,\delta^{k+1}q)=0.
\end{equation*}
Hence $v = Tf$. Since $Tf_{j}\rightarrow Tf=v$ strongly in $L^2$, $T_{h_j}f_j=v_{j}$, and $v_{j}\rightarrow v$ strongly in $L^2$, this contradicts the assumption \eqref{contradiction}.
\end{proof}

We conclude this section by introducing the error estimate of the eigenvalue problem (cf. \cite[Section~9]{2010Boffi}). Recall that $X: =\mathring{H}^*\Lambda^{k}(\Omega)\times \mathring{H}^*\Lambda^{k+1}(\Omega)$ and define
\begin{equation*}
    B((u,p),(w,q)) = (u,w),\quad \forall (u,p),(w,q)\in X.
\end{equation*}
From the inf-sup conditions in Lemma \ref{lem:infsup}, we can define $\tilde{T}: X\rightarrow X$, $\tilde{T}^*:X\rightarrow X$ by
\begin{equation*}
    A(\tilde{T}(u,p),(w,q)) = B((u,p),(w,q)),\quad \forall (w,q)\in X,
\end{equation*}
\begin{equation*}
    A((u,p),\tilde{T}^*(w,q)) = B((u,p),(w,q)),\quad \forall (u,p)\in X.
\end{equation*}
Similarly for any $(u,p)\in X_h:=\Lambda_h^k\times\Lambda_h^{k+1}$, we can define $\tilde T_h:X_h\to X_h$ by
\begin{equation*}
    A(\tilde{T}_h(u,p),(w,q)) = B((u,p),(w,q)),\quad \forall (w,q)\in X_h.
\end{equation*}
It follows that $T$ and $\tilde{T}$ have the same spectrum, and $T_h$ and $\tilde{T}_h$ have the same nonzero spectrum, with the same algebraic multiplicities. Let $\lambda$ be an eigenvalue of \eqref{eq:Hodge:mixed} with algebraic multiplicity $m$ and ascent $\alpha$. Then $\overline{(\lambda+\nu)^{-1}}$ will be an eigenvalue of $\tilde T^*$ with algebraic multiplicity $m$. Let $E$ be the space of generalized eigenvectors of $\tilde{T}$ corresponding to $(\lambda+\nu)^{-1}$ and $E^*$ be the space of generalized eigenvectors of $\tilde T^*$ corresponding to $\overline{(\lambda+\nu)^{-1}}$. Then there exist $m$ eigenvalues of \eqref{eq:Hodge:mixed:dis}, denoted by $\lambda_{1h},\cdots,\lambda_{mh}$, converging to $\lambda$, and the following holds (\cite[Corollaries~9.6, 9.8]{2010Boffi}).

\begin{theorem}
    There exist constants $C>0$ and $h_0>0$ such that, for $0<h<h_0$,
    \begin{equation*}
        |\lambda-\frac{1}{m}\sum_{i=1}^m\lambda_{i,h}|\leq C\delta_h\delta_h^*
    \end{equation*}
    \begin{equation*}
        |\lambda-\lambda_{i,h}|\leq C(\delta_h\delta_h^*)^{1/\alpha},\quad i=1,2,\cdots,m,
    \end{equation*}
    with
    \begin{equation*}
        \delta_h = \sup_{\substack{U\in E\\ \|U\|_{V}=1}}\inf_{U_h\in X_h}\|U-U_h\|_X,
    \end{equation*}
    and
    \begin{equation*}
        \delta_h^* = \sup_{\substack{U\in E^*\\ \|U\|_{V}=1}}\inf_{U_h\in X_h}\|U-U_h\|_X,
    \end{equation*}
    where, for any $U = (v,q)\in X$, we set $\|U\|_{X} = \|v\|_{H^*\Lambda}+\|q\|_{H^*\Lambda}$ and $\|U\|_{V} = \|v\|_{L^2}$.
\end{theorem}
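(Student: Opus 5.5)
The plan is to cast the discrete problem as a conforming Galerkin approximation of the operator $\tilde T$ on $X$ and to invoke the abstract Babu\v{s}ka--Osborn theory in the form of \cite[Corollaries~9.6, 9.8]{2010Boffi}. That theory needs two ingredients. The first is uniform convergence $\|\tilde T-\tilde T_h\|\to0$ in a norm adapted to the eigenproblem, together with convergence of the adjoint operators. The second is a quasi-optimality (C\'ea-type) estimate for $\tilde T_h$ and for its adjoint.

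\emph{Step 1.} I would reduce the problem to the $V$-seminorm setting. Since $B((u,p),(w,q))=(u,w)$ depends only on the first component, $\tilde T$ acts through $u\mapsto T u$, with the second component $p=d^kTu$. Hence $\tilde T$ and $T$ share their nonzero spectrum, as stated before the theorem. The previous theorem gives $\|T-T_h\|_{\mathcal L(L^2;L^2)}\to0$. This yields the convergence of $\tilde T_h$ to $\tilde T$ measured in $\|\cdot\|_V$ on the data side, which is exactly the setting of \cite[Section~9]{2010Boffi} with $V$ the $L^2$ component. By Corollary~\ref{cor:spectral:conv}, it also shows that exactly $m$ discrete eigenvalues cluster at $\lambda$ and that no spurious eigenvalues appear. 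The adjoint $\tilde T^*$ is well defined by the second part of Lemma~\ref{lem:infsup}. For $\tilde T_h^*$ I would repeat the contradiction argument of the previous theorem with the roles of the arguments swapped, to obtain the analogous adjoint convergence.

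\emph{Step 2.} I would prove quasi-optimality: for $F\in X$, $\|(\tilde T-\tilde T_h)F\|_X\le C\inf_{U_h\in X_h}\|\tilde TF-U_h\|_X$, and similarly for the adjoint. This step needs a \emph{discrete} inf--sup condition that is uniform in $h$. I would mimic the proof of Lemma~\ref{lem:infsup} with the test pair $(w,q)=(v_h+\delta^{k+1}p_h,(\varepsilon+\nu)p_h)$. This pair lies in $X_h$ because $\delta^{k+1}\Lambda_h^{k+1}\subset\Lambda_h^k$ by the subcomplex property. All the estimates in that proof are pointwise algebraic and transfer verbatim, so they give the same constants $\alpha$ and $C$ independent of $h$. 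The nondegeneracy condition on the other argument follows in the same way: the choice $(w_h-\nu^{-1}\delta^{k+1}q_h,0)$ or $(0,\nu q_h)$ again stays in $X_h$. Galerkin orthogonality $A((\tilde T-\tilde T_h)F,W_h)=0$ combined with boundedness of $A$ then gives the C\'ea estimate by the standard Babu\v{s}ka argument.

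\emph{Step 3.} I would combine the two previous steps. The abstract estimates bound $|\lambda^{-1}-\hat\lambda_h^{-1}|$, with $\lambda$ shifted by $\nu$, by the sum of $\sup|((\tilde T-\tilde T_h)U,U^*)|$ and a product of the restricted operator norm errors. The bilinear term is handled as in the proof of Theorem~\ref{thm:eigenvalue_err}: write it as $A((\tilde T-\tilde T_h)U,\tilde T^*U^*-W_h)$ using Galerkin orthogonality, then apply boundedness and the C\'ea estimates. This yields $C\delta_h\delta_h^*$, because on $E$ and $E^*$ the operators $\tilde T$ and $\tilde T^*$ are invertible maps of finite-dimensional spaces, so $\|\tilde TU\|$ is comparable to $\|U\|_V$. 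Undoing the shift $z=(\lambda+\nu)^{-1}$, which is a smooth map near $z\neq0$, converts the arithmetic-mean estimate into the first claim. The individual estimate with exponent $1/\alpha$ comes from the ascent in the standard way.

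The main obstacle is Step 1. The operator $\tilde T$ is defined on $X$, but the natural convergence holds only in the $V$-seminorm. One must check that the framework of \cite{2010Boffi}, which handles this through the $V$-norm normalization in $\delta_h$ and $\delta_h^*$, applies to the non-symmetric form $A$. In particular, uniform convergence of the adjoint discrete operators must be established separately rather than deduced by symmetry. I would attempt it via the same discrete-compactness argument of Lemma~\ref{lem:dis_compact}.
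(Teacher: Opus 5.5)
Your proposal is correct and follows the paper's route. The paper also obtains both estimates by invoking Boffi's Corollaries~9.6 and~9.8, on top of the inf--sup lemma and the $L^2$ operator convergence of the preceding theorem. The inf--sup proof transfers verbatim to $X_h$, as you note, which is what the paper tacitly uses.

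One simplification: the adjoint convergence you single out as the main obstacle is automatic. The first component of $\tilde T_h^*$ is the $L^2$-Hilbert adjoint of $T_h$, so $\|T^*-T_h^*\|_{\mathcal L(L^2\Lambda^k(\Omega);L^2\Lambda^k(\Omega))}=\|T-T_h\|_{\mathcal L(L^2\Lambda^k(\Omega);L^2\Lambda^k(\Omega))}$, and no second discrete-compactness argument is needed.
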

\section{Numerical experiments}\label{sec:numerical}
In this section, we describe the numerical experiments associated with the advection--diffusion eigenvalue problem~\eqref{eq:advectionDiffusionOperator} across the dual de~Rham complex.
The experiments are organized according to the formulations they test: the full Hodge mixed scheme~\eqref{eq:Hodge:mixed} of Section~\ref{sec:convergence2} for the bottom-of-the-complex degree $\Lambda^0$, and the degenerate top-form case ($\Lambda^3$ in 3D) in which the codifferential of the multiplier vanishes identically and the half-Hodge formulation~\eqref{eq:gen:mixed} of Section~\ref{sec:convergence1} reduces to the primal Galerkin advection--diffusion eigenproblem.
The half-Hodge mixed formulation~\eqref{eq:gen:mixed} of Section~\ref{sec:convergence1} and its potential-based alternative of Remark~\ref{rmk:alternative-formulation} are compared directly on a re-entrant Fichera cube in Section~\ref{sec:numerical:lshape}.
The intermediate degrees (the Maxwell / kinematic-dynamo and magnetic-field cases) are treated in the manifold experiments of Section~\ref{sec:numerical:surface}.
All the numerical experiments performed in this section were implemented using Firedrake and Netgen \cite{firedrake, netgen, ngsPETSc}, and are available at \url{https://github.com/UZerbinati/MHD}.

\subsection{Dynamo theorems}\label{sec:numerical:theorems}
The numerical experiments described below are designed to validate, where possible, the following theorems concerning stationary forms of the magnetic advection--diffusion operator $\mathcal{L}_{\vec{u}}$~\cite{arnoldkhesin2021}.
Throughout this section $\mathcal{L}_{\vec{u}}$ denotes the coercive advection--diffusion operator $\varepsilon\, d^{k-1}\delta^{k} + L_{\vec{u}}$ of Section~\ref{sec:convergence1}, which is the negative of the dynamo operator~\eqref{eq:advectionDiffusionOperator} of the introduction. With this sign convention the diffusion term is positive semidefinite, so the reference spectra recorded below are non-negative and consistent with King's Theorem~\ref{thm:king}.
\begin{theorem}[V.~I.~Arnold]
    \label{thm:Arnold1}
    The number of linearly independent stationary k-forms is not less than the k-th Betti number of the manifold $\mathcal{M}$.
\end{theorem}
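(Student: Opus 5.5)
The strategy is to work on the closed forms only: there the stationary operator $\mathcal{L}_{\vec u}=\varepsilon\,d^{k-1}\delta^k+L_{\vec u}$ maps into exact forms, and a Fredholm index-zero argument turns the resulting codimension into a lower bound on the kernel. We treat $\mathcal M$ as a compact oriented Riemannian manifold without boundary, so the Hodge decomposition and the compactness results used above carry over with the boundary conditions dropped. The stationary forms are taken to be the $k$-forms $v$ with $d^kv=0$ and $\mathcal L_{\vec u}v=0$, as in \eqref{k-form-eigenvalue} with $\lambda=0$.

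\emph{Step 1 (range is exact).} Let $X:=\{v\in H\Lambda^k(\mathcal M):d^kv=0\}$, a closed subspace of $L^2\Lambda^k$, with the Hodge decomposition
\[
X=\mathcal H^k\oplus d^{k-1}H\Lambda^{k-1},\qquad \dim\mathcal H^k=b_k.
\]
For $v\in X$, Cartan's formula gives $L_{\vec u}v=d^{k-1}\mathrm{i}_{\vec u}v$. Hence
\[
\mathcal L_{\vec u}v=d^{k-1}\bigl(\varepsilon\delta^k v+\mathrm{i}_{\vec u}v\bigr),
\]
so $\mathcal L_{\vec u}$ maps $D(\mathcal L_{\vec u})\subset X$ into the exact forms $d^{k-1}H\Lambda^{k-1}$. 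In particular the range is $L^2$-orthogonal to $\mathcal H^k$, and its codimension in $X$ is at least $b_k$.

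\emph{Step 2 (index zero).} Fix the shift $\nu$ of Lemma~\ref{lem:coercivity}, which gives coercivity of the form $a$ in \eqref{def:a}. The solution operator $T$ of \eqref{eq:gen:source}, restricted to $X$, is bounded, injective, and compact. Compactness follows from the embedding $\mathbb K^k\hookrightarrow L^2$ used before Lemma~\ref{lem:elliptic_reg}, with $\mathbb K^k$ read on the closed manifold.

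By the identity $(\mathcal L_{\vec u}+\nu I)T=P_X$ from Section~\ref{sec:pseudospectra}, the operator $\mathcal L_{\vec u}+\nu I:D(\mathcal L_{\vec u})\to X$ is bijective with compact inverse $T|_X$. Therefore
\[
\mathcal L_{\vec u}=(\mathcal L_{\vec u}+\nu I)\bigl(I-\nu\,T|_X\bigr).
\]
The factor $I-\nu T|_X$ is Fredholm of index zero on $X$ by Riesz--Schauder theory. Consequently $\mathcal L_{\vec u}$, as an operator from $D(\mathcal L_{\vec u})$ with the graph norm into $X$, is Fredholm of index $0$, and
\[
\dim\ker\mathcal L_{\vec u}=\operatorname{codim}\operatorname{ran}\mathcal L_{\vec u}.
\]

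\emph{Step 3 (conclusion).} Combining Steps 1 and 2,
\[
\dim\ker\mathcal L_{\vec u}\;\geq\;\operatorname{codim}_X\bigl(d^{k-1}H\Lambda^{k-1}\bigr)=\dim\mathcal H^k=b_k.
\]
Every element of the kernel is a closed stationary $k$-form, which gives the statement. Equivalently, one can phrase this step through $T$: the range of $I-\nu T|_X$ is contained in $(\mathcal L_{\vec u}+\nu I)^{-1}$ applied to the exact forms. This misses at least $b_k$ dimensions, so by the Fredholm alternative $\ker(I-\nu T|_X)$ has dimension at least $b_k$.

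\emph{Main obstacle.} The delicate point is Step 2, namely that $T$ is well defined and compact as an operator on the closed forms $X$ rather than on $\mathbb K^k$ inside $\mathring H^*\Lambda^k$. One must check that the coercivity of Lemma~\ref{lem:coercivity} and the $H^s$-regularity \eqref{eq:s-regular} transfer to $H\Lambda^k\cap H^*\Lambda^k$ on a closed manifold; there they follow from standard elliptic regularity for the Hodge Laplacian. One must also check that $T$ maps $X$ into $X$. I would verify the latter as in the first part of the proof of Theorem~\ref{thm:eigenvalue_err}: a $d$-free right-hand side yields a $d$-free solution, since the Lagrange multiplier vanishes when $f$ is closed.
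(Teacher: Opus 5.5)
The paper gives no proof of this statement. It is quoted from Arnold--Khesin \cite{arnoldkhesin2021} and used only as a benchmark for the experiments in Section~\ref{sec:numerical}. Your argument is the standard one: on closed forms the operator takes values in the exact forms and is Fredholm of index zero there. Its main line, Steps~1--3, is correct, but two points should be repaired.

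First, the ``equivalently'' sentence at the end of Step~3 is false for $k\geq 1$. Since $Tx\in D(\mathcal L_{\vec u})$, the element $(I-\nu T|_X)x$ lies in $D(\mathcal L_{\vec u})\supset T(d^{k-1}H\Lambda^{k-1})$ only when $x$ does. But $D(\mathcal L_{\vec u})$ is the range of the compact operator $T$, hence a proper subspace of the infinite-dimensional space $X$. The correct identity is $(I-\nu T|_X)x=\mathcal L_{\vec u}(Tx)$ for every $x\in X$. It gives $\operatorname{ran}(I-\nu T|_X)=\operatorname{ran}\mathcal L_{\vec u}\subset d^{k-1}H\Lambda^{k-1}$ and $\ker(I-\nu T|_X)=\ker\mathcal L_{\vec u}$, and Riesz--Schauder on $X$ then finishes the proof.

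This also gives the better factorization for Step~2, namely $\mathcal L_{\vec u}=(I-\nu T|_X)(\mathcal L_{\vec u}+\nu I)$, which composes an isomorphism $D(\mathcal L_{\vec u})\to X$ with an index-zero operator on $X$. In your order, $I-\nu T|_X$ has to act on $D(\mathcal L_{\vec u})$ with the graph norm. Compactness of $T$ in that norm is true, but you do not argue it.

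Second, Cartan's formula in Step~1 is only formal for $\vec u\in L^\infty$, where $\mathcal L_{\vec u}$ is defined weakly through $T$. The rigorous replacement is one line, and it is all that Step~3 uses. For $w\in\mathcal H^k\subset\mathbb K^k$ one has $\delta^k w=0$. Then \eqref{eq:gen:source} and \eqref{def:a} give $((\mathcal L_{\vec u}+\nu I)v,w)=a(v,w)=\nu(v,w)$ for $v\in D(\mathcal L_{\vec u})$, i.e.\ $\operatorname{ran}\mathcal L_{\vec u}\perp\mathcal H^k$.

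Your ``main obstacle'' is partly misplaced. $T$ maps into $\mathbb K^k\subset X$ by its definition, so nothing needs checking there. The multiplier argument from the proof of Theorem~\ref{thm:eigenvalue_err} concerns $\tilde T$, not $T$. The real inputs are two:
\begin{itemize}
\item the coercivity of Lemma~\ref{lem:coercivity}, which is a pointwise estimate and transfers verbatim to the manifold;
\item the compactness of $\mathbb K^k\hookrightarrow L^2\Lambda^k$, which on a closed manifold follows from Gaffney's inequality and Rellich's theorem.
\end{itemize}

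Phrased through orthogonality to $\mathcal H^k$, the argument runs unchanged in the bounded-domain setting of Section~\ref{sec:convergence1}. It also has a finite-dimensional analogue, using $\mathcal H^k_h\subset\mathbb K^k_h$, $\delta^k w_h=0$ on $\mathcal H^k_h$, and rank--nullity. This shows that the half-Hodge scheme \eqref{eq:gen:discrete_mixed} has at least $\dim\mathcal H^k_h=b_k$ zero eigenvalues on every mesh.
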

\begin{theorem}[V.~I.~Arnold]
    \label{thm:Arnold2}
    If the diffusion coefficient $R_m^{-1}$ is sufficiently large, then the number of linearly independent stationary k-forms is equal to the k-th Betti number of the manifold $\mathcal{M}$.
\end{theorem}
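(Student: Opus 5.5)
We read Theorem~\ref{thm:Arnold2} in the setting of \cite{arnoldkhesin2021}. There $\mathcal M$ is a closed (compact, boundaryless) Riemannian manifold and $u$ is a smooth vector field. A stationary $k$-form is a nonzero element of the kernel of the advection--diffusion operator. We take this operator in the full Hodge form $\varepsilon\Delta + L_u$, with $\Delta = d\delta+\delta d$ and $\varepsilon = R_m^{-1}$. Its restriction to closed forms is $\varepsilon d\delta + L_u$, which is the operator of Section~\ref{sec:convergence1}.

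Dividing by $\varepsilon$ and writing $t:=\varepsilon^{-1}$, we study $\ker(\Delta + tL_u)$ for small $t>0$. The lower bound $\dim\ker \geq b_k$ is exactly Theorem~\ref{thm:Arnold1}, which we take as given. So the whole task is the upper bound $\dim\ker(\Delta+tL_u)\leq b_k$ for $t$ small. By Hodge theory $\ker\Delta=\mathcal H^k$ has dimension $b_k$. Hence it suffices to show that the $L^2$-orthogonal projection $P_{\mathcal H}$ onto $\mathcal H^k$ is injective on $\ker(\Delta+tL_u)$.

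For the injectivity, let $\omega\in\ker(\Delta+tL_u)$ and decompose $\omega=h+\omega^\perp$ with $h=P_{\mathcal H}\omega$ and $\omega^\perp\perp\mathcal H^k$. Since $\Delta h=0$, we get $\Delta\omega^\perp=-tL_u\omega$. We then use elliptic regularity of the Hodge Laplacian on $(\mathcal H^k)^\perp$, namely $\|\omega^\perp\|_{H^2}\leq C\|\Delta\omega^\perp\|_{L^2}$, which holds on a closed manifold. Since $L_u=\mathrm{i}_u d+d\,\mathrm{i}_u$ is first order with smooth coefficients, $\|L_u\eta\|_{L^2}\leq C_u\|\eta\|_{H^1}$. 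Combining these,
\begin{equation*}
\|\omega^\perp\|_{H^2}\leq Ct\,C_u\bigl(\|h\|_{H^1}+\|\omega^\perp\|_{H^2}\bigr).
\end{equation*}
Because $\mathcal H^k$ is finite dimensional and consists of smooth forms, we also have $\|h\|_{H^1}\leq C'\|h\|_{L^2}$. For $t<(2CC_u)^{-1}$ we can absorb the $\omega^\perp$ term and obtain $\|\omega^\perp\|_{H^2}\leq C''t\|h\|_{L^2}$. In particular, if $h=0$ then $\omega^\perp=0$, so $\omega=0$. This gives injectivity and hence $\dim\ker\leq b_k$. Together with Theorem~\ref{thm:Arnold1}, this yields equality once $R_m^{-1}$ exceeds the explicit threshold $2CC_u$.

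The main obstacle is the relative-boundedness step, i.e.\ controlling the first-order advection term by the second-order diffusion uniformly on $(\mathcal H^k)^\perp$. On a closed manifold this is routine elliptic regularity. On a domain with boundary, as in Sections~\ref{sec:convergence1}--\ref{sec:convergence2}, only $H^s$ regularity with $s>1/2$ from \eqref{eq:s-regular} is available. In that case I would instead argue with the form $a(\cdot,\cdot)$, testing the kernel equation against $\omega^\perp$. This gives $\varepsilon\|\delta\omega^\perp\|^2\leq \|u\|_{L^\infty}\|\omega\|_{L^2}\|\delta\omega^\perp\|$. A Poincaré inequality on $\mathbb K^k\cap(\mathcal H^k)^\perp$ then gives $\|\omega^\perp\|_{L^2}\leq C\varepsilon^{-1}\|u\|_{L^\infty}(\|h\|_{L^2}+\|\omega^\perp\|_{L^2})$, and the same absorption argument goes through for large $\varepsilon$.
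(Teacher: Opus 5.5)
The paper does not prove this statement. Theorem~\ref{thm:Arnold2} is quoted from \cite{arnoldkhesin2021} and is only tested numerically, by counting computed eigenvalues with $|\lambda|\le 10^{-8}$ in Table~\ref{tab:harmonic-counts}. There is therefore no argument in the paper to compare against. Your proof is a genuine, rigorous route, and it is correct.

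On a closed manifold it is the standard upper semicontinuity of $\dim\ker$ under a small relatively bounded perturbation of $\Delta$, whose kernel is $\mathcal H^k$. The $H^2$ estimate on $(\mathcal H^k)^\perp$ makes this quantitative, and Theorem~\ref{thm:Arnold1} supplies the matching lower bound. If you want the proof self-contained, that lower bound also follows from a Fredholm index count. The operator $\varepsilon d\delta+L_u$ maps closed forms into exact forms. On exact forms it is a compact perturbation of the isomorphism $\varepsilon\Delta$. Viewed as a map from closed forms to exact forms it therefore has index $b_k$, hence a kernel of dimension at least $b_k$.

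Your energy-based variant is actually the better fit for this paper, for several reasons:
\begin{itemize}
\item The weak form never differentiates $u$, so it works for $u\in L^\infty$ (e.g.\ wind~5).
\item It needs no $H^2$ regularity.
\item It applies on manifolds with boundary, such as the $3$-cylinder of Section~\ref{sec:numerical:surface}, with $b_k$ read as $\dim\mathcal H^k$ for the chosen boundary conditions.
\item It gives an explicit threshold proportional to $C_P\|u\|_{L^\infty}$, where $C_P$ is the Poincar\'e constant on $\mathbb K^k\cap(\mathcal H^k)^\perp$.
\end{itemize}
As written it counts only the closed (half-Hodge) kernel. Testing the full-Hodge kernel equation with $v^\perp$ and using $\|v^\perp\|_{L^2}\le C(\|dv\|_{L^2}+\|\delta v\|_{L^2})$ gives the same conclusion for all $k$-forms.

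What the paper's numerics add, and what no perturbation argument of this kind can certify, is that the count stays equal to $b_2$ even at $R_m^{-1}=10^{-2}$.
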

\begin{theorem}[C.~King]
    \label{thm:king}
    If the velocity field $\mathbf{u}$ is a potential field, i.e., $\mathbf{u} = \nabla \phi$ for some scalar function $\phi$, then the spectrum of the dynamo operator acting on k-forms is real and non-negative.
\end{theorem}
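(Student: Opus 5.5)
The plan is to show that, when $\vec u=\nabla\phi$, the operator $\mathcal L_{\vec u}=\varepsilon(d^{k-1}\delta^k+\delta^{k+1}d^k)+L_{\vec u}$ (or its half-Hodge version $\varepsilon d^{k-1}\delta^k+L_{\vec u}$ on $d$-closed forms) is a weighted Hodge Laplacian. Such an operator is self-adjoint and positive semidefinite in a weighted $L^2$ inner product, so its spectrum is real and non-negative. This is a Witten-type conjugation argument. Throughout I assume $\phi$ is smooth on $\bar{\mathcal M}$ (or at least $W^{1,\infty}$), so that $e^{\pm\phi/\varepsilon}$ is bounded above and below.

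\textbf{Step 1 (weighted codifferential).} Introduce the inner product $(\omega,\eta)_\phi:=\int \omega\wedge\star\bar\eta\,e^{-\phi/\varepsilon}$. Its norm is equivalent to the $L^2$ norm, so all Sobolev spaces, traces and the boundary conditions $\operatorname{Tr}(\star\omega)=0$ are unchanged. Define $\delta_\phi:=e^{\phi/\varepsilon}\,\delta\,e^{-\phi/\varepsilon}$. The Leibniz rule $\delta(g\omega)=g\,\delta\omega-\mathrm{i}_{\nabla g}\omega$ (up to the sign convention for $\delta$) gives
\[
\delta_\phi=\delta+\varepsilon^{-1}\mathrm{i}_{\nabla\phi}=\delta+\varepsilon^{-1}\mathrm{i}_{\vec u}.
\]
Stokes' formula then shows that $\delta_\phi$ is the formal adjoint of $d$ with respect to $(\cdot,\cdot)_\phi$. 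The boundary term is the same as in the unweighted case, because the weight is a positive function multiplying $\star\bar w$. Hence it vanishes under the same boundary conditions, or trivially on a closed manifold.

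\textbf{Step 2 (identification of the operator).} By Cartan's formula $L_{\vec u}=d\,\mathrm{i}_{\vec u}+\mathrm{i}_{\vec u}d$,
\[
\varepsilon(d\delta_\phi+\delta_\phi d)=\varepsilon(d\delta+\delta d)+d\,\mathrm{i}_{\vec u}+\mathrm{i}_{\vec u}d=\mathcal L_{\vec u}.
\]
In the half-Hodge setting the constraint $dv=0$ gives $L_{\vec u}v=d\,\mathrm{i}_{\vec u}v$, so $\varepsilon d\delta v+L_{\vec u}v=\varepsilon d\delta_\phi v$ on $\mathbb K^k$. At the level of the weak formulations \eqref{eq:gen:mixed} and \eqref{eq:Hodge:mixed}, this means the following. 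Testing with $e^{-\phi/\varepsilon}w$, or equivalently rewriting the sesquilinear form in the weighted inner product, turns $\varepsilon(\delta v,\delta w)+(\mathrm{i}_{\vec u}v,\delta w)$ into the Hermitian form $\varepsilon(\delta_\phi v,\delta_\phi w)_\phi$, plus $\varepsilon(dv,dw)_\phi$ in the full case. The main technical care is to do this consistently with the paper's dual-complex conventions. The roles of $d$ and $\delta$ are swapped relative to the standard complex, so one might need $\phi\mapsto-\phi$ or the adjoint pairing. I expect this sign and convention bookkeeping, together with checking that the weighted constraint space matches the unweighted one, to be the main obstacle. I would first verify it on the scalar case $k=0$, where $-\varepsilon\Delta f+\nabla\phi\cdot\nabla f=-\varepsilon e^{\phi/\varepsilon}\div(e^{-\phi/\varepsilon}\nabla f)$.

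\textbf{Step 3 (conclusion).} Let $\mathcal L_{\vec u}v=\lambda v$ with $v\neq0$. Pair the equation with $v$ in $(\cdot,\cdot)_\phi$ and integrate by parts using Step 1:
\[
\lambda\|v\|_\phi^2=\varepsilon\|\delta_\phi v\|_\phi^2+\varepsilon\|dv\|_\phi^2\ \ge 0,
\]
so $\lambda$ is real and non-negative. For the full spectrum rather than only eigenvalues, use the solution operator $T$ of Section~\ref{sec:convergence1} or Section~\ref{sec:convergence2}. Step 2 shows that $T$ is self-adjoint and non-negative on the weighted Hilbert space; it is also compact. Therefore its spectrum is real and contained in $[0,\infty)$ and consists of $0$ and eigenvalues. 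The correspondence $z=(\lambda+\nu)^{-1}$ from Section~\ref{sec:pseudospectra} then transfers this to $\sigma(\mathcal L_{\vec u})\subset[0,\infty)$. The statement for the dynamo operator \eqref{eq:advectionDiffusionOperator} follows with the sign convention adopted in this section.
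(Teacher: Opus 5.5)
The paper gives no proof of this statement. It is quoted from \cite{arnoldkhesin2021} and used only as a benchmark for the experiments of Section~\ref{sec:numerical}, so there is no argument of the paper to compare against. Your weighted-inner-product argument is correct and is essentially the classical proof. Equivalently, the unitary map $v\mapsto e^{-\phi/(2\varepsilon)}v$ from the weighted to the unweighted $L^2$ turns $\mathcal L_u$ into $\varepsilon$ times a Witten Laplacian.

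The convention issue you single out as the main obstacle resolves with no sign change:
\begin{itemize}
\item The paper's $\delta$ is the formal adjoint of $d$ (Stokes' formula in Section~\ref{sec:notation}). Hence $\delta(e^{-\phi/\varepsilon}w)=e^{-\phi/\varepsilon}(\delta w+\varepsilon^{-1}\mathrm{i}_u w)$, exactly as you wrote.
\item For real $\phi\in W^{1,\infty}$, the essential condition $\operatorname{Tr}(\star w)=0$ is invariant under multiplication by $e^{-\phi/\varepsilon}$.
\item $\mathbb K^k$ is characterized by $dv=0$, which does not involve the weight.
\end{itemize}
One step should be written out. In the half-Hodge case $e^{-\phi/\varepsilon}w\notin\mathbb K^k$, so you may test with it only after extending the equation to all $w\in\mathring H^*\Lambda^k$. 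This extension is legitimate: testing \eqref{eq:gen:mixed} with $w=\delta^{k+1}q$ gives $\delta^{k+1}p=0$, so the multiplier term vanishes identically. In the full case, take $\tilde w=e^{-\phi/\varepsilon}v$. The terms $\varepsilon(\delta^{k+1}p,\tilde w)+(\mathrm{i}_u^{k+1}p,\tilde w)$ then equal $\varepsilon(\delta_\phi p,v)_\phi=\varepsilon\|d^kv\|_\phi^2$, using $p=d^kv$ and $\operatorname{Tr}(\star p)=0$.

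Two claims in Step 3 need tightening, though neither is a gap.
\begin{itemize}
\item In the half-Hodge case $T=TP_X$ with the \emph{unweighted} projection $P_X$. So $T$ is self-adjoint for $(\cdot,\cdot)_\phi$ only on $X$, not on $L^2\Lambda^k$. Since $T(X)\subset X$ and $T$ vanishes on $X^\perp$, you still have $\sigma(T)=\sigma(T|_X)\cup\{0\}$.
\item More importantly, $T\ge 0$ alone only gives $\lambda=z^{-1}-\nu>-\nu$. Non-negativity of $\sigma(\mathcal L_u)$ comes from your Rayleigh identity, equivalently $T|_X\le\nu^{-1}$ in the weighted norm. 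Combine this with compactness, which makes every nonzero point of $\sigma(T)$ an eigenvalue corresponding to an eigenvalue of $\mathcal L_u$.
\end{itemize}

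Your argument has two limits worth noting. It needs a single-valued potential: on $\mathbb T^3$ the constant wind~1 is closed but not exact, so the theorem says nothing there. It is also purely continuous. Since $e^{-\phi/\varepsilon}\Lambda_h^k\not\subset\Lambda_h^k$, the discrete problems are not weighted-symmetric. For a nontrivial gradient wind, the convergence theory of Sections~\ref{sec:convergence1}--\ref{sec:convergence2} therefore only guarantees that the discrete eigenvalues approach the real axis, not that they lie exactly on it.
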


\subsection{Common setup}\label{sec:numerical:setup}
The Cartesian experiments are posed on the cube $[0,\pi]^3$, and the manifold experiments are posed on the flat $3$-torus and the flat $3$-cylinder described in Subsection~\ref{sec:numerical:surface}.

For every experiment, we compute the spectrum of the discrete operator at every diffusivity considered, and, at the representative Reynolds number $R_m = 1$, the corresponding pseudospectrum.
The diffusivity $R_m^{-1}$ is swept over $\{10,1,10^{-1},10^{-2}\}$ to probe the transition between diffusion-dominated and advection-dominated regimes, while the velocity field $\vec{u}$ is selected from the catalog of Table~\ref{tab:winds} below.
The same wind catalog is used across all the experimental configurations, so that a given wind label denotes the same field throughout. All velocity fields in Table~\ref{tab:winds}  belong to \(L^\infty\), as required by the analysis in Sections \ref{sec:convergence1} and \ref{sec:convergence2}. Wind~5 is the only field not in \(W^{1,\infty}\), and hence probes the \(L^\infty  \setminus W^{1,\infty}\) regime covered by the present theory but excluded by existing analyses of advection–diffusion of differential forms \cite{2013HeumannHiptmair,2016HeumannHiptmairPagliantini,heumann2012fully}.

The Cartesian discretizations used in the experiments are summarized in Table~\ref{tab:experiments}.
The form index is the index of the trial space in the dual de~Rham complex of Section~\ref{sec:notation}, consistent with the convention adopted at~\eqref{eq:deRham_dual}. The discrete spaces are the standard first-family Nédélec, Raviart--Thomas, continuous Lagrange, and discontinuous Lagrange finite element spaces, taken at the lowest polynomial order throughout. At $\vec{u} = \vec{0}$ these discretizations reproduce the analytical reference spectrum, converging to it at the expected $\mathcal{O}(h^2)$ rate under refinement to $N = 64$ (Table~\ref{tab:convergence}).

\begin{table}[h]
\centering
\footnotesize
\setlength{\tabcolsep}{4pt}
\resizebox{\textwidth}{!}{%
\begin{tabular}{cccccc}
\toprule
$d$ & form & trial $\times$ multiplier & formulation & boundary conditions & reference spectrum on $[0,\pi]^d$ \\
\midrule
3 & $\Lambda^0$ & $\mathrm{DG} \times \mathrm{RT}$ & full Hodge~\eqref{eq:Hodge:mixed} & natural & $\{l^2 + m^2 + n^2 : l,m,n \geq 1\}$ \\
3 & $\Lambda^3$ & $\mathrm{CG}$ & primal (top-form) & $v = 0$ & $\{l^2 + m^2 + n^2 : l,m,n \geq 1\}$ \\
\bottomrule
\end{tabular}}
\caption{Discretizations of the Cartesian experiments of this paper, posed on the cube $[0,\pi]^3$. The discrete spaces in the third column refer to the trial space for the eigenfunction and (when present) the auxiliary space enforcing the codifferential constraint of the formulation. The reference spectrum consists of the analytical eigenvalues used as a self-consistency check at $\vec{u} = \vec{0}$. The bottom-of-the-complex case $\Lambda^0$ exercises the full Hodge mixed formulation~\eqref{eq:Hodge:mixed}, while at the top of the complex ($\Lambda^3$ in 3D) the codifferential of the multiplier vanishes identically and the formulation collapses to the primal Galerkin advection--diffusion eigenproblem.}
\label{tab:experiments}
\end{table}

\begin{table}[h]
\centering
\footnotesize
\setlength{\tabcolsep}{4pt}
\resizebox{\textwidth}{!}{%
\begin{tabular}{clccc}
\toprule
label & velocity field $\vec{u}(\vec{x})$ in $\mathbb{R}^3$ & $\vec{u}\in W^{1,\infty}$ & $\vec{u} = \nabla \phi$ & $\nabla\cdot\vec{u} = 0$ \\
\midrule
0 & $\vec{u} = \vec{0}$ & yes & yes & yes \\
1 & constant $(1,1,1)$ & yes & yes & yes \\
2 & $(\cos y + \cos z,\, \cos z + \sin x,\, \cos x + \sin y)$ & yes & no & yes \\
3 & $(\sin z + \cos y,\, \sin x + \cos z,\, \sin y + \cos x)$ & yes & no & yes \\
4 & $\nabla\times \vec{A}$ with $\vec{A} = (\sin y \sin z,\, \sin z \sin x,\, \sin x \sin y)$ & yes & no & yes \\
5 & Heaviside shear $(\mathrm{sign}(z - \pi/2),\,0,\,0)$ & no & no & yes \\
\bottomrule
\end{tabular}}
	\caption{Wind catalog used across all the experiments. A given label denotes the same field throughout. The gradient column flags the case $\vec{u} = \nabla \phi$ in which King's Theorem~\ref{thm:king} predicts a real non-negative spectrum. The divergence-free column flags physical relevance for the dynamo problem. All the listed velocity fields belong to \(L^\infty\); wind 5 is the only one outside \(W^{1,\infty}\).}
\label{tab:winds}
\end{table}

\begin{table}[h]
\centering
\footnotesize
\begin{tabular}{ccccc}
\toprule
$N$ & $h$ & $\Lambda^0$ (DG$\times$RT) & $\Lambda^3$ (CG) & rate \\
\midrule
$8$  & $\pi/8$  & $6.3\times10^{-3}$ & $1.9\times10^{-1}$ & --- \\
$16$ & $\pi/16$ & $1.6\times10^{-3}$ & $4.8\times10^{-2}$ & $2.0$ \\
$32$ & $\pi/32$ & $4.0\times10^{-4}$ & $1.2\times10^{-2}$ & $2.0$ \\
$64$ & $\pi/64$ & $1.0\times10^{-4}$ & $3.0\times10^{-3}$ & $2.0$ \\
\bottomrule
\end{tabular}
\caption{Convergence in the self-consistency test at $\vec{u} = \vec{0}$ on the cube $[0,\pi]^3$: absolute error in the leading eigenvalue (the Dirichlet--Laplace value $3$) of the full Hodge $\Lambda^0$ and primal $\Lambda^3$ discretizations against the reference spectrum $\{l^2+m^2+n^2\}$, over an eightfold mesh refinement. The rate column is the observed order between successive listed rows; both exhibit the expected $\mathcal{O}(h^2)$ convergence rate.}
\label{tab:convergence}
\end{table}

\subsection{Full Hodge experiments}\label{sec:numerical:fullhodge}
For $k = 0$ in the dual de~Rham complex, we discretize the full Hodge mixed formulation~\eqref{eq:Hodge:mixed} of Section~\ref{sec:convergence2} using the pair $\mathrm{DG} \times \mathrm{RT}$.
With $v\in\mathrm{DG}$ and the auxiliary flux $\vec{p}\in\mathrm{RT}$ playing the role of $d^0 v$, the discrete weak form reads
\begin{equation}\label{eq:fullhodge:k0}
    \begin{alignedat}{2}
	    -\frac{1}{R_m}(\div \vec{p}, w) + (\vec{u}\cdot \vec{p}, w) &= \lambda (v, w), \quad &&\forall w\in\mathrm{DG}, \\
	    (\vec{p}, \vec{q}) + (v, \div \vec{q}) &= 0, \quad &&\forall \vec{q}\in\mathrm{RT},
    \end{alignedat}
\end{equation}
which is the dual-mixed form of $-\frac{1}{R_m}\Delta v + \vec{u}\cdot\nabla v = \lambda v$  with $\vec{p} = \nabla v$. No boundary condition is imposed on the flux space $\mathrm{RT}$, so the homogeneous Dirichlet condition $v = 0$ on $\partial\Omega$ is enforced naturally through the boundary term of the second equation, consistently with the Dirichlet reference spectrum of Table~\ref{tab:experiments}.

Stacking the unknowns $(v, \vec{p})$, the discretization~\eqref{eq:fullhodge:k0} leads to the saddle-point generalized eigenvalue problem
\begin{equation}\label{eq:generalized_eigenvalue_problem}
    \begin{bmatrix}
        0 & U - R_m^{-1} B^T \\
        B & C
    \end{bmatrix}
    \begin{bmatrix} v \\ \vec{p} \end{bmatrix}
    = \lambda
    \begin{bmatrix}
        M & 0 \\
        0 & 0
    \end{bmatrix}
    \begin{bmatrix} v \\ \vec{p} \end{bmatrix},
\end{equation}
where $M$ is the $\mathrm{DG}$ mass matrix, $C$ is the $\mathrm{RT}$ mass matrix, $B$ encodes the divergence pairing $(v, \div \vec{q})$ of the second equation, so that its transpose $B^T$ encodes the diffusion pairing $(\div \vec{p}, w)$ of the first, and $U$ represents the advection term $(\vec{u}\cdot\vec{p}, w)$.
The top-left block vanishes because \eqref{eq:fullhodge:k0} contains no pairing between $v$ and the test function $w$: both the diffusion and the advection act on $v$ only through the flux $\vec{p}$, and hence sit in the top-right block.
We write $K$ for the saddle-point matrix on the left-hand side of~\eqref{eq:generalized_eigenvalue_problem}.
Eliminating the flux through the second row, $\vec{p} = -C^{-1}B v$, reduces~\eqref{eq:generalized_eigenvalue_problem} to
\begin{equation}\label{eq:fullhodge:schur}
    \left(R_m^{-1} B^T C^{-1} B - U C^{-1} B\right) v = \lambda M v,
\end{equation}
in which $B^T C^{-1} B$ is the mixed Laplacian of the $\mathrm{DG}\times\mathrm{RT}$ pair and $U C^{-1} B$ the corresponding advection operator.
In particular, $K$ is invertible whenever the operator in~\eqref{eq:fullhodge:schur} is, and every eigenvector of~\eqref{eq:generalized_eigenvalue_problem} with a finite eigenvalue has a non-vanishing scalar component $v$.
The right-hand-side mass matrix is rank deficient: its kernel consists of the vectors $(0, \vec{p})$, which are the eigenvectors of~\eqref{eq:generalized_eigenvalue_problem} associated with the infinite eigenvalue and do not correspond to eigenvalues of $\mathcal{L}_{\vec{u}}$.

Among the standard strategies for this type of generalized eigenvalue problem, we use a Krylov--Schur method as implemented in the SLEPc library \cite{slepc}, with a shift-and-invert spectral transformation targeting eigenvalues close to a shift $\sigma$. Since the mass matrix is singular, \eqref{eq:generalized_eigenvalue_problem} cannot be reduced to a standard eigenvalue problem for $M^{-1}K$, so we consider the reciprocal problem
\begin{equation}\label{eq:reciprocal_generalized_eigenvalue_problem}
    \begin{bmatrix}
        M & 0 \\
        0 & 0
	\end{bmatrix} \begin{bmatrix} v \\ \vec{p} \end{bmatrix}
    = \mu
    \begin{bmatrix}
        0 & U - R_m^{-1} B^T \\
        B & C
	\end{bmatrix} \begin{bmatrix} v \\ \vec{p} \end{bmatrix},
\end{equation}
with $\mu$ the reciprocal of $\lambda$, in which the invertible saddle-point matrix $K$ sits on the right-hand side.
Reducing~\eqref{eq:reciprocal_generalized_eigenvalue_problem} to standard form amounts to solving
\begin{equation}\label{eq:inverted_generalized_eigenvalue_problem}
    \begin{bmatrix}
    0 & U - R_m^{-1} B^T \\
        B & C
    \end{bmatrix}^{-1}
    \begin{bmatrix}
        M & 0 \\
        0 & 0
	\end{bmatrix} \begin{bmatrix} v \\ \vec{p} \end{bmatrix}
	= \mu \begin{bmatrix} v \\ \vec{p} \end{bmatrix},
\end{equation}
which is exactly the shift-and-invert transformation of~\eqref{eq:generalized_eigenvalue_problem} with the shift $\sigma$ placed at the origin, and it is to the operator in~\eqref{eq:inverted_generalized_eigenvalue_problem} that the Krylov--Schur iteration is applied.
Each application of this operator requires one solve with the saddle-point matrix $K$, carried out either by a direct $LU$ factorization or by the preconditioned iterative solver of Section~\ref{sec:numerical:riesz}.
The kernel vectors $(0, \vec{p})$ of the mass matrix are mapped to $\mu = 0$ by~\eqref{eq:inverted_generalized_eigenvalue_problem}, that is, to the infinite eigenvalue of~\eqref{eq:generalized_eigenvalue_problem}.
They are filtered out automatically by the Krylov subspace generated by~\eqref{eq:inverted_generalized_eigenvalue_problem}, since the mass matrix vanishes on such vectors and the iteration targets the eigenvalues $\mu$ of largest modulus.

Given the non-normal nature of $\mathcal{L}_{\vec{u}}$, we also study the pseudospectrum~\cite{2005Trefethen}.
We follow the projection strategy of \cite{2005Trefethen} for large sparse matrices: rather than evaluating the resolvent norm of the full pencil~\eqref{eq:generalized_eigenvalue_problem}, we restrict the operator to the invariant subspace spanned by the computed eigenvectors and evaluate the resolvent norm of this restriction.
Let $\lambda_1,\dots,\lambda_n$ be the computed eigenvalues of~\eqref{eq:generalized_eigenvalue_problem}, $\Lambda = \operatorname{diag}(\lambda_1,\dots,\lambda_n)$, and let $\Phi$ be the matrix whose columns are the corresponding eigenvectors $(v_j, \vec{p}_j)$.
The pseudospectrum is measured in the $L^2$ norm of the scalar component, whose Gram matrix on the computed eigenvectors is
\begin{equation}\label{eq:pseudo:gram}
    G = \Phi^H \begin{bmatrix} M & 0 \\ 0 & 0 \end{bmatrix} \Phi .
\end{equation}
On the span of the eigenvectors the resolvent acts as $\Phi (zI - \Lambda)^{-1}\Phi^{-1}$, so that the reciprocal of its $L^2$ norm is
\begin{equation}\label{eq:pseudo:sigma_min}
	\mathcal{R}(z) = \left\| G^{1/2} (zI - \Lambda)^{-1} G^{-1/2} \right\|_2^{-1} = \sigma_{\min}\!\left( G^{1/2} (zI - \Lambda) G^{-1/2} \right),
\end{equation}
where $\sigma_{\min}$ denotes the smallest singular value.
Equivalently, $\mathcal{R}(z)$ is the smallest singular value of $zI - \widehat{K}$, where $\widehat{K} = G_V^{-1/2}\, (V^H K V)\, G_V^{-1/2}$ is the compression of the pencil onto an orthonormal basis $V$ of the same subspace and $G_V = V^H \operatorname{diag}(M, 0)\, V$ the associated Gram matrix. This is the form in which $\mathcal{R}(z)$ is evaluated for the magnetic $2$-form experiments of Section~\ref{sec:numerical:cartesian}.
When the eigenvectors are $L^2$-orthonormal, $G$ is the identity and $\mathcal{R}(z)$ reduces to the distance from $z$ to the spectrum, as expected for a normal operator. The departure of $G$ from the identity is the source of the non-normality visible in the pseudospectra below.
We sample $\mathcal{R}(z)$ on a uniform Cartesian grid over a square window of the complex plane centered on the leading eigenvalues, normalize it by its maximum over the window, and plot linearly spaced level sets of the normalized value to visualize the pseudospectrum.

\subsection{Potential formulation on the re-entrant Fichera corner}\label{sec:numerical:lshape}
The half-Hodge formulation of Section~\ref{sec:convergence1} and its potential-based alternative of Remark~\ref{rmk:alternative-formulation} are discretized together on the re-entrant Fichera corner $\Omega = [0,2]^3 \setminus [1,2]^3$, whose re-entrant edges and corner at $(1,1,1)$ are the canonical three-dimensional setting in which magnetic fields lose $H^1$ regularity.

In the classical magnetic-field formulation, we solve the discrete half-Hodge mixed system~\eqref{eq:gen:discrete_mixed}, seeking $\vec{B}\in \mathring{H}(\curl)$ together with a Lagrange multiplier $\psi\in H^1_0$ that enforces $\div\vec{B} = 0$, with the essential boundary condition $\vec{n}\times\vec{B} = 0$.
In the potential formulation of Remark~\ref{rmk:alternative-formulation} we instead seek $\vec{A}$ with $\vec{B} = \curl\vec{A}$ with the gauge $\div\vec{A} = 0$ imposed by a multiplier in $H^1/\mathbb{R}$.
Here $\vec{A}$ carries no essential boundary condition, since $\vec{n}\times\curl\vec{A} = 0$ is natural, and the shift $\nu$ is only used as a theoretical tool to ensure coercivity and thus plays no role in this section.
Because $\vec{B} = \curl\vec{A}$, the two continuous nonzero spectra coincide.
Both eigenproblems are solved by a shift-and-invert Krylov--Schur method as in Section~\ref{sec:numerical:fullhodge}.
At finite mesh size, however, the two discrete spectra and pseudospectra need not be identical. Numerically, we observe convergence between them as $h\rightarrow 0$. The most singular mode, concentrated at the re-entrant corner, converges most slowly under mesh refinement, exactly the loss-of-regularity behavior this domain is designed to exhibit.

Neither the spectrum nor the pseudospectrum is identical for the two formulations, but we observe that they converge to one another. We normalize the reciprocal $\sigma_{\min}(z - L) = 1/\|(z-L)^{-1}\|$ by its maximum over the plotted window and plot linearly spaced contour levels. The level set at value $\varepsilon$ corresponds to the boundary of the $\varepsilon$-pseudospectrum.

Figure~\ref{fig:pseudospectra:lshape} reports the two formulations at $R_m = 1$ for two winds.
The top row is the self-adjoint baseline $\vec{u} = \vec{0}$, for which the pseudospectra are indistinguishable concentric rings around the real eigenvalues ($\|G-I\|_F \approx 10^{-12}$), as expected for a normal operator.
The bottom row is the discontinuous Heaviside shear $\vec{u} = (\operatorname{sign}(z-1),0,0)$, which jumps across the plane $z = 1$ through the re-entrant corner.

\begin{figure}[!htbp]
\centering
\includegraphics[width=0.82\linewidth]{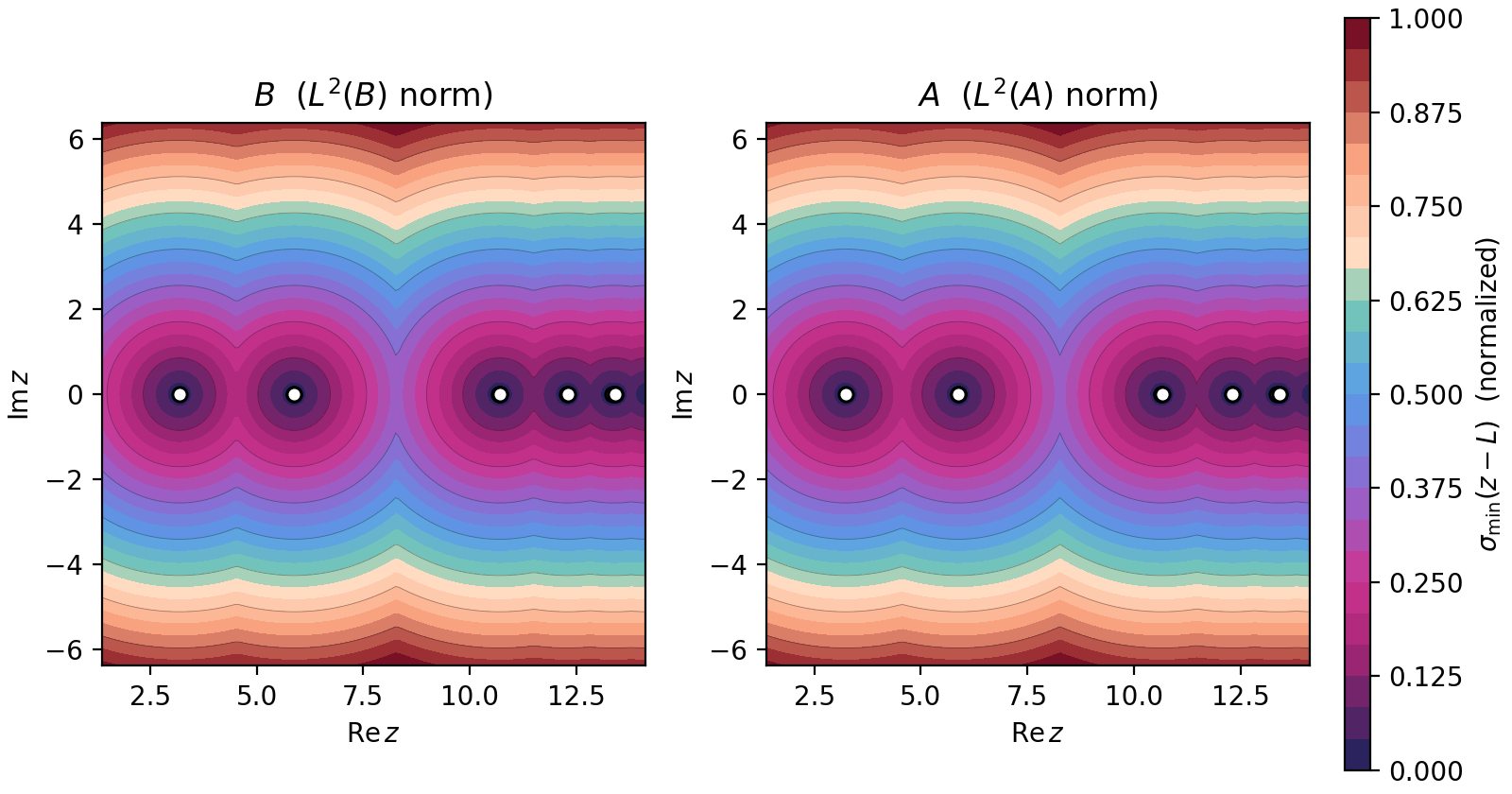} \\[3pt]
\includegraphics[width=0.82\linewidth]{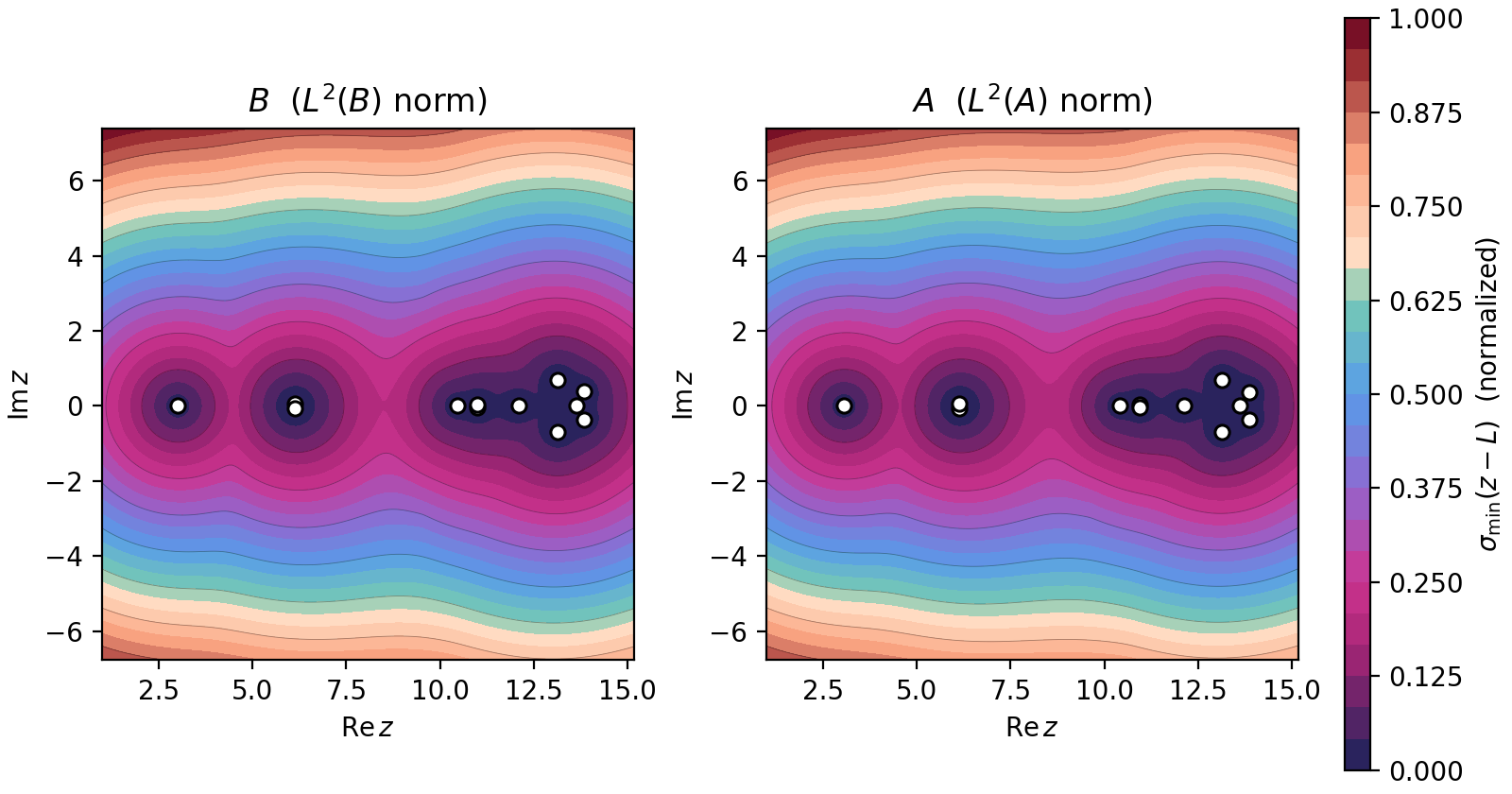}
\caption{The reciprocal of the resolvent norm, i.e., $\sigma_{\min}(z - L)$, of the dynamo problem on the re-entrant Fichera corner $\Omega = [0,2]^3\setminus[1,2]^3$ at $R_m = 1$ (Netgen mesh, $\mathrm{maxh}=0.06$ ($185{,}540$ cells)), with the computed eigenvalues overlaid as white dots. Left column of each row: classical magnetic-field formulation in the $L^2(\vec{B})$ norm. Right column: potential formulation of Remark~\ref{rmk:alternative-formulation} in the $L^2(\vec{A})$ norm. Top row: self-adjoint baseline $\vec{u} = \vec{0}$, where both operators are normal with respect to their respective $L^2$ inner products, and the pseudospectra converge to each other as $h\to 0$. Bottom row: discontinuous Heaviside shear $\vec{u} = (\operatorname{sign}(z-1),0,0)$, where the spectra and the pseudospectra still converge to each other as $h\to 0$ and both formulations exhibit comparable non-normality in their respective norms.}
\label{fig:pseudospectra:lshape}
\end{figure}

\subsection{Top-form case}\label{sec:numerical:topform}
At the top of the dual complex in three dimensions the trial space is $\Lambda^3$, realized by continuous Lagrange elements. The codifferential of the multiplier vanishes identically and the half-Hodge mixed formulation~\eqref{eq:gen:mixed} of Section~\ref{sec:convergence1} reduces to the primal Galerkin advection--diffusion eigenproblem
\begin{equation}\label{eq:fokkerplanck}
    \frac{1}{R_m}(\nabla v, \nabla w) - (v\vec{u}, \nabla w) = \lambda (v, w), \qquad \forall w\in\mathrm{CG},
\end{equation}
with $v = 0$ on $\partial\Omega$.
Eigenvalues are computed using direct shift-and-invert on the resulting Hermitian generalized eigenvalue problem when $\vec{u} = \vec{0}$ and by its non-Hermitian shift-and-invert variant otherwise. The pseudospectrum is obtained via the same reduced-model recipe~\eqref{eq:pseudo:gram}--\eqref{eq:pseudo:sigma_min} described in Section~\ref{sec:numerical:fullhodge}, with the $\mathrm{CG}$ mass matrix in place of $\operatorname{diag}(M, 0)$ in the Gram matrix~\eqref{eq:pseudo:gram}.

\subsection{Spectra and pseudospectra}\label{sec:numerical:cartesian}
Figure~\ref{fig:pseudospectra} collects the computed pseudospectra at the representative Reynolds number $R_m=1$ for the full Hodge formulation~\eqref{eq:fullhodge:k0} ($\Lambda^0$, $\mathrm{DG}\times\mathrm{RT}$, left column) and the primal top-form formulation~\eqref{eq:fokkerplanck} ($\Lambda^3$, $\mathrm{CG}$, right column) on $[0,\pi]^3$, with the computed eigenvalues overlaid as white markers (dots for the compatible $\Lambda^0$ case, crosses for the primal $\Lambda^3$ case).
The four rows correspond to the four winds from the catalog used throughout the paper: the self-adjoint baseline (wind 0), the constant gradient field (wind 1), the smooth divergence-free field (wind 4), and the discontinuous Heaviside shear (wind 5).
The discrete eigenvalues are computed at every diffusivity $R_m^{-1}\in\{10,1,10^{-1},10^{-2}\}$ considered in Section~\ref{sec:numerical:setup}. Only the $R_m = 1$ pseudospectra are plotted here for legibility, since lower diffusivity compresses the spectrum into a narrow strip while higher diffusivity disperses it past the sampled rectangle.

The first two rows provide a numerical verification of King's Theorem~\ref{thm:king}, which asserts that for a potential wind $\vec{u} = \nabla\phi$ the spectrum of the advection--diffusion operator $\mathcal{L}_{\vec{u}}$ is real and non-negative.
The self-adjoint baseline $\vec{u} = \vec{0}$ (wind 0) is the trivial gradient $\phi = 0$; the discrete spectra of both formulations sit exactly on the non-negative real axis and the pseudospectra form concentric rings around each eigenvalue, in agreement with the self-adjoint prediction.
For wind 1 the velocity field is a non-trivial gradient ($\vec{u} = (1,1,1) = \nabla(x + y + z)$): the operator is no longer self-adjoint, yet the discrete eigenvalues remain real and non-negative for both formulations, exactly as predicted by Theorem~\ref{thm:king}.
The pseudospectra at wind 1 are visibly distorted compared with the wind 0 baseline, which is consistent with the non-normality introduced by the advection term while the spectrum itself remains on the real axis.

Wind 4, a smooth divergence-free vector field that is \emph{not} a gradient, produces eigenvalues with non-vanishing imaginary parts (visible as complex-conjugate pairs of white markers off the real axis), with the pseudospectra warped accordingly.
Wind 5, the Heaviside shear in $L^\infty\setminus W^{1,\infty}$, lies outside the common regularity hypotheses for advection-diffusion of differential forms \cite{2016HeumannHiptmairPagliantini,2026LiLuoWangWu,2024WangWu,heumann2012fully,2013HeumannHiptmair,2020WuXu}. The computed eigenvalues acquire small but nonzero imaginary parts at $R_m=1$ (complex-conjugate pairs, milder than those of wind~4), but it is covered by the \(L^\infty\)-based convergence theory of Sections \ref{sec:convergence1}--\ref{sec:convergence2}.

\begin{figure}[!htbp]
\centering
\includegraphics[width=0.62\linewidth]{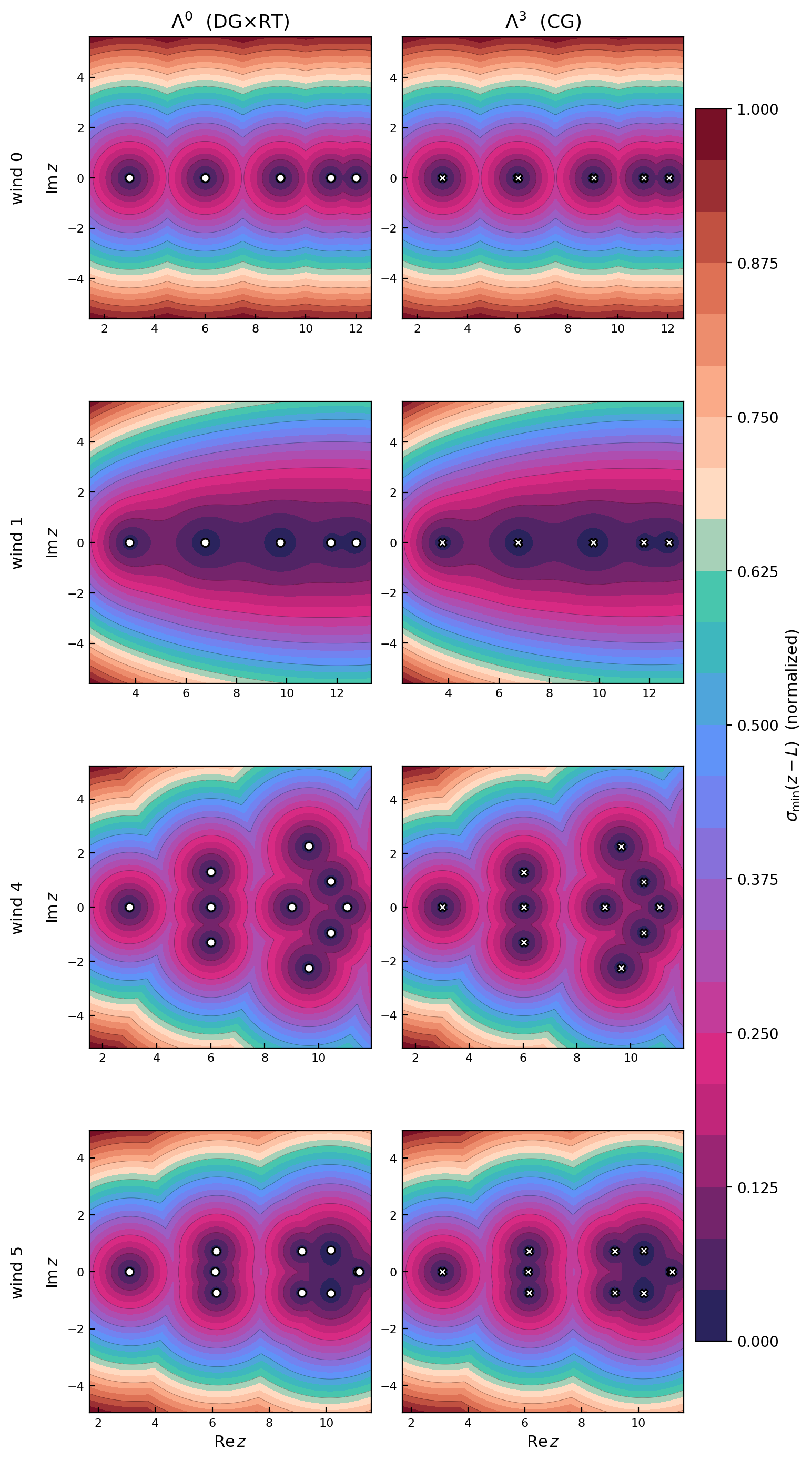}
\caption{Pseudospectra $\sigma_{\min}(z - L)$ at $R_m = 1$ on the cube $[0,\pi]^3$ ($N = 64$). Rows: wind 0 (self-adjoint), wind 1 (constant gradient), wind 4 (smooth non-gradient), wind 5 (Heaviside shear, $u\in L^\infty\setminus W^{1,\infty}$). Left column: full Hodge formulation~\eqref{eq:fullhodge:k0} ($\Lambda^0$, $\mathrm{DG}\times\mathrm{RT}$), discrete eigenvalues plotted as white dots. Right column: primal top-form formulation~\eqref{eq:fokkerplanck} ($\Lambda^3$, $\mathrm{CG}$), eigenvalues as white crosses; each panel is zoomed to the leading eigenvalues and the shared colorbar shows $\sigma_{\min}$ normalized per panel. The wind 0 and wind 1 rows numerically verify King's Theorem~\ref{thm:king} (real non-negative spectrum for potential winds $\vec{u} = \nabla\phi$); the wind 4 row exhibits complex-conjugate eigenvalues and warped pseudospectra characteristic of non-normality, and the wind 5 row shows the same effect more mildly.}
\label{fig:pseudospectra}
\end{figure}

We compute the pseudospectra of the dynamo operator for the magnetic $2$-form $\vec{B}\in H(\div)$ on the flat $3$-torus $\mathbb{T}^3$ (periodic in all three directions, $b_2 = 3$) and the flat $3$-cylinder $\mathbb{T}^2\times[0,2\pi]$ (periodic in two directions, $b_2 = 1$). The operator is the mixed $H(\div)\times H(\curl)$ realization of the $2$-form Hodge Laplacian with the induction advection term; the $3$-cylinder again uses the extruded tensor-product Raviart--Thomas and N\'ed\'elec elements.
Figure~\ref{fig:pseudospectra:kform1:flat} reports the pseudospectra $\sigma_{\min}(z - L)$ at the representative Reynolds number $R_m = 1$ for the same four winds as in Figure~\ref{fig:pseudospectra}, with the computed eigenvalues overlaid as white dots, sampled around the leading physical band of the operator. At the self-adjoint baseline (wind~0), the level sets are concentric around each eigenvalue, as expected for a normal operator; the advective winds progressively distort them and push the eigenvalues off the real axis, the smooth non-gradient wind~4 and the rough Heaviside shear wind~5 recording the largest departure from normality.

\begin{figure}[!htbp]
\centering
\includegraphics[width=0.72\linewidth]{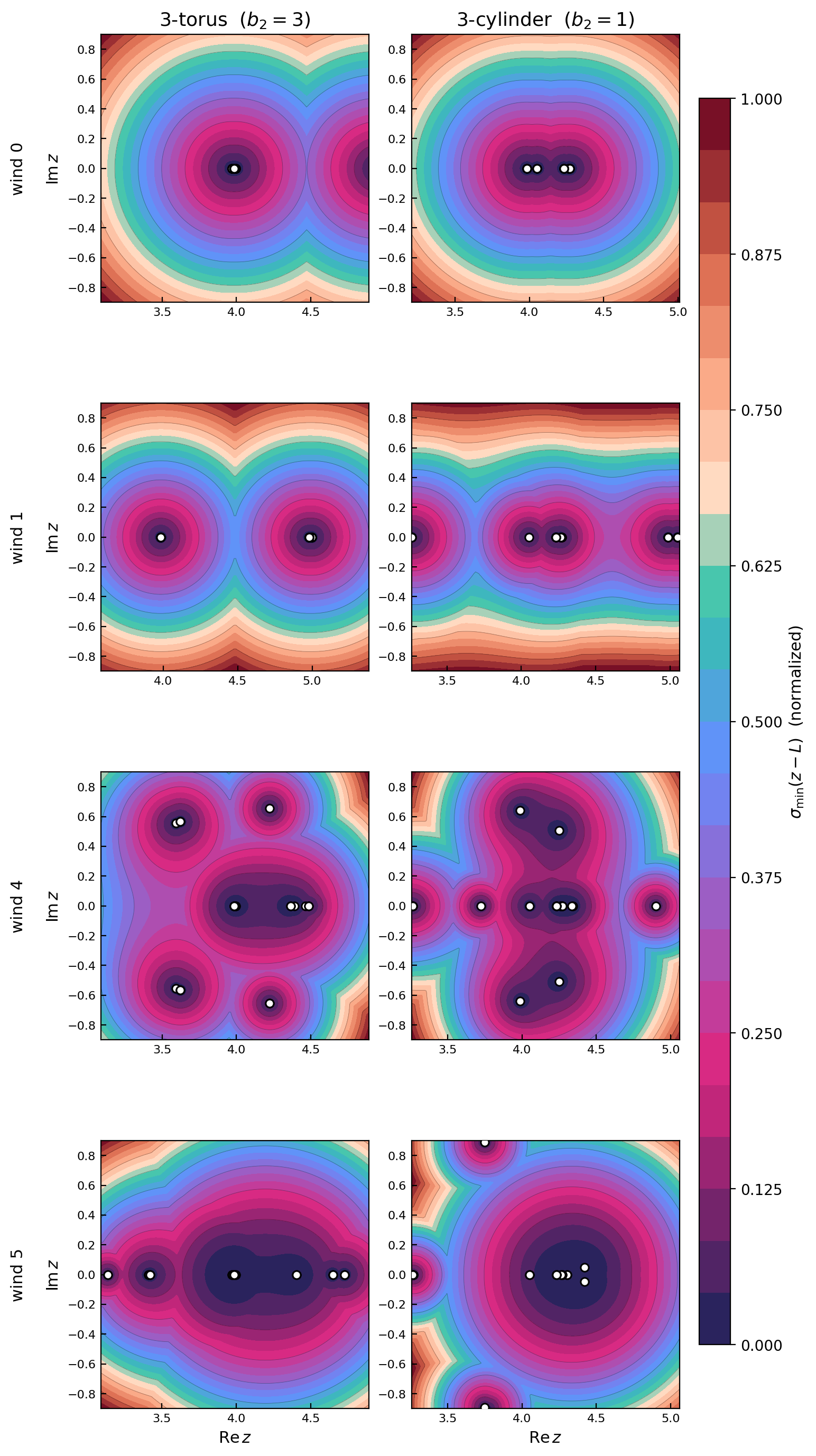}
\caption{Pseudospectra $\sigma_{\min}(z - L)$ of the magnetic $2$-form ($H(\div)$) dynamo operator at $R_m = 1$ ($N = 32$). Left column: flat $3$-torus ($b_2 = 3$). Right column: flat $3$-cylinder ($b_2 = 1$). Rows match Figure~\ref{fig:pseudospectra}: wind~0 (self-adjoint), wind~1 (constant), wind~4 (smooth non-gradient), wind~5 (Heaviside shear, $\vec{u}\in L^\infty\setminus W^{1,\infty}$). White dots are the computed eigenvalues, sampled around the leading physical band. The self-adjoint baseline gives concentric (normal) level sets on the real axis; the advective winds distort them and generate complex-conjugate pairs, with winds~4 and~5 recording the largest non-normality.}
\label{fig:pseudospectra:kform1:flat}
\end{figure}

\subsection{Iterative GMRES with Riesz-map preconditioning}\label{sec:numerical:riesz}
The shift-and-invert spectral transformation requires the action of $(A - \sigma M)^{-1}$ at every Arnoldi step, where $A$ denotes the stiffness matrix of the formulation at hand (the saddle-point matrix $K$ of~\eqref{eq:generalized_eigenvalue_problem} in the full Hodge case).
Up to moderate mesh resolutions in 2D and at coarse resolutions in 3D, this is carried out by a direct $LU$ factorization through MUMPS, but for the larger 3D experiments the fill-in of the saddle factorization becomes prohibitive.
As an alternative we use an iterative solver for the shifted linear system appearing in the Arnoldi iteration. In particular, we adopt preconditioned GMRES on the operator $A - \sigma M$, using a discrete Riesz map as the preconditioner of the relevant energy space.
Concretely, the Riesz inner product on the trial/multiplier pair from Table~\ref{tab:experiments} (for example $H(\mathrm{curl})\times H^1$ for the Kikuchi cases, $L^2 \times H(\mathrm{div})$ for the full Hodge $k=0$ case, and $H^1$ for the top-form cases) is assembled once and applied as a preconditioner $P$ for the Krylov solver.

In our experiments the action of $P^{-1}$ is computed using an $LU$ factorization of $P$ through MUMPS: since $P$ is symmetric positive definite and decouples by construction across the trial and multiplier blocks, its $LU$ factor is several times smaller than that of the indefinite saddle point system $A - \sigma M$, and remains tractable in regimes where the saddle factor does not.

\subsection{Harmonic 2-forms and Arnold's theory}\label{sec:numerical:surface}
To probe Theorems~\ref{thm:Arnold1} and~\ref{thm:Arnold2}, which relate stationary $k$-forms to the topology of the underlying manifold, we run the dynamo problem on two three-dimensional flat manifolds. By Arnold's theory, the magnetic field is an $(n-1)$-form; in three dimensions this is a $2$-form $\vec{B}\in H(\div)$ with $\div\vec{B}=0$, so the relevant Betti number is $b_2$. We use the flat $3$-torus $\mathbb{T}^3 = [0,2\pi]^3$, periodic in all three directions, with $b_2 = 3$, and the flat $3$-cylinder $\mathbb{T}^2\times[0,2\pi]$, periodic in two directions with a free interval in the third, with $b_2 = 1$.
The distinction between $b_1$ and $b_2$ is invisible on the closed $3$-torus, where Poincar\'e duality gives $b_2 = b_1 = 3$, but it is decisive on the $3$-cylinder, where a $2$-form instead counts independent \emph{fluxes}, of which there is exactly one---the flux through the $\mathbb{T}^2$ cross-section, with proxy field $e_z$ along the free interval---so $b_2 = 1$. The two \emph{circulations} $e_x, e_y$ around the periodic directions that one might expect are harmonic $1$-forms ($b_1 = 2$), not fluxes; the magnetic $2$-form does not see them.
The $3$-torus carries a simplicial (tetrahedral) mesh.

The advection is driven by the wind catalog of Table~\ref{tab:winds}, and the diffusivity is swept over $R_m^{-1}\in\{10, 1, 10^{-1}, 10^{-2}\}$. On a topologically non-trivial manifold, a constant field (wind~1) restricts to a non-exact closed 1-form, in agreement with the lower bound of Theorem~\ref{thm:Arnold1}.

To turn the verification of Theorems~\ref{thm:Arnold1} and~\ref{thm:Arnold2} into a quantitative test, we count for every $(\text{geometry}, \text{wind}, R_m)$ triple the number of computed eigenvalues with $|\lambda| \leq 10^{-8}$, filtering out the Lagrange-multiplier modes. The results are reported in Table~\ref{tab:harmonic-counts}. In every one of the 48 cases the count equals the second Betti number exactly ($3$ on the $3$-torus, $1$ on the $3$-cylinder). This confirms the lower bound of Theorem~\ref{thm:Arnold1} and attains the equality predicted by Theorem~\ref{thm:Arnold2} throughout the sweep, including at the lowest diffusivity $R_m^{-1} = 10^{-2}$ and for the smooth divergence-free non-gradient winds.

\begin{table}[h]
\centering
\footnotesize
\setlength{\tabcolsep}{6pt}
\resizebox{\textwidth}{!}{%
\begin{tabular}{c|cccc@{\hspace{1.5em}}c|cccc}
\toprule
\multicolumn{5}{c}{$3$-torus ($b_2 = 3$)} & \multicolumn{5}{c}{$3$-cylinder ($b_2 = 1$)} \\
\cmidrule(lr){1-5} \cmidrule(lr){6-10}
wind & $R_m^{-1}=10$ & $R_m^{-1}=1$ & $R_m^{-1}=10^{-1}$ & $R_m^{-1}=10^{-2}$ & wind & $R_m^{-1}=10$ & $R_m^{-1}=1$ & $R_m^{-1}=10^{-1}$ & $R_m^{-1}=10^{-2}$ \\
\midrule
0 & 3 & 3 & 3 & 3 & 0 & 1 & 1 & 1 & 1 \\
1 & 3 & 3 & 3 & 3 & 1 & 1 & 1 & 1 & 1 \\
2 & 3 & 3 & 3 & 3 & 2 & 1 & 1 & 1 & 1 \\
3 & 3 & 3 & 3 & 3 & 3 & 1 & 1 & 1 & 1 \\
4 & 3 & 3 & 3 & 3 & 4 & 1 & 1 & 1 & 1 \\
5 & 3 & 3 & 3 & 3 & 5 & 1 & 1 & 1 & 1 \\
\bottomrule
\end{tabular}}
\caption{Number of computed eigenvalues with $|\lambda|\leq 10^{-8}$ of the magnetic $2$-form ($H(\div)$) discretization on the flat $3$-torus ($b_2=3$, left) and the flat $3$-cylinder ($b_2=1$, right), as a function of the wind label (Table~\ref{tab:winds}) and the diffusivity $R_m^{-1}$. In every case the count equals $b_2$: the lower bound of Theorem~\ref{thm:Arnold1} is attained and the equality of Theorem~\ref{thm:Arnold2} holds across the whole diffusivity range, with no low-diffusion excess.}
\label{tab:harmonic-counts}
\end{table}

\subsection{ABC flow on the 3D periodic cube}\label{sec:numerical:abc}
The ABC flow on the 3D periodic cube $\mathbb{T}^3 = [0,2\pi]^3$,
\begin{equation}\label{eq:abc-wind}
    \vec{u}(x,y,z) = \bigl(\sin z + \cos y,\;\sin x + \cos z,\;\sin y + \cos x\bigr),
\end{equation}
corresponding to the parameters $A=B=C=1$, is the canonical flow used in kinematic dynamo, including that of Arnold and Korkina~\cite{arnold1983growth}.
A direct computation shows that $\nabla\times\vec{u}=\vec{u}$, and consequently the half-Hodge advection-diffusion operator $\mathcal{L}_{\vec{u}}$ acting on 2-forms admits an exact closed-form eigenpair.
Indeed, evaluating the saddle form at $(\vec{B},\psi) = (\vec{u}, 0)$ against an arbitrary test pair $(v,\phi)$ gives
\begin{align*}
    \tfrac{1}{R_m}(\nabla\times\vec{u},\,\nabla\times\vec{v})
    + (\vec{u}\times\vec{u},\,\nabla\times\vec{v})
    + (\nabla\psi,\,\vec{v}) + (\vec{u},\,\nabla\phi)
    &= \tfrac{1}{R_m}(\vec{u},\,\vec{v}) + 0 + 0 + 0 \\
    &= \tfrac{1}{R_m}\,(\vec{u},\,\vec{v}),
\end{align*}
using $\nabla\times\vec{u}=\vec{u}$ for the diffusion term, $\vec{u}\times\vec{u}=\vec{0}$ for the advection, and $\nabla\cdot\vec{u}=0$ for the constraint.  Comparing with the mass form $m((\vec{u},0),(\vec{v},\phi)) = (\vec{u},\vec{v})$ yields the eigenrelation
\begin{equation}\label{eq:abc-eigenpair}
    \mathcal{L}_{\vec{u}}\,\vec{u} \;=\; \frac{1}{R_m}\,\vec{u}.
\end{equation}
This eigenpair is exact, requires no discretization, and provides a sharp benchmark for the Kikuchi $N_1^{\mathrm{curl}}\times\mathrm{CG}$ half-Hodge formulation, i.e., we will study the rate at which the discrete eigenvalue $\lambda_h$ closest to $1/R_m$ converges to the analytic value $\lambda^* = 1/R_m$.

We discretize \eqref{eq:abc-eigenpair} on a uniform mesh of the periodic cube $[0,2\pi]^3$ with $N$ cells per direction, $N\in\{4,10,16,22,28,32,38,42\}$, using lowest-order N\'ed\'elec edge elements for $\vec{B}$ and continuous Lagrange elements for the Lagrange multiplier $\psi$.  The eigenproblem is solved with SLEPc's shift-and-invert at target $\sigma=1/R_m$. We select among the converged eigenpairs the one whose eigenvalue is closest to $\lambda^*=1/R_m$.

Figure~\ref{fig:abc} reports the empirical eigenvalue convergence (top left) alongside an oblique 3D view of the full discrete eigenfunction (top right) and four planar $z$-slices of the same eigenfunction (bottom).  The eigenvalue error decays at a rate close to $\mathcal{O}(h^2)$, the rate predicted by the Babu\v{s}ka--Osborn theory for a simple eigenvalue under the assumptions of Section~\ref{sec:convergence1}.

\begin{figure}[!htbp]
\centering
\begin{subfigure}[c]{0.48\linewidth}
\centering
\begin{tikzpicture}
\begin{loglogaxis}[
    width=\linewidth,
    height=0.85\linewidth,
    xlabel={$h = 2\pi/N$},
    ylabel={$|\lambda_h - 1/R_m|$},
    grid=both,
    grid style={dashed, gray!30},
    legend pos=south west,
    legend cell align=left,
    legend style={font=\footnotesize, fill=white, fill opacity=0.85, draw=none},
    every axis plot/.append style={line width=1.1pt, mark size=2.5pt},
    tick label style={font=\footnotesize},
    label style={font=\small},
    x dir=reverse,
]
\addplot+[teal, mark=*, mark options={solid, fill=teal!80!black}]
    table[x=h, y=lam_err, col sep=comma] {experiments/output/abc_convergence.csv};
\addlegendentry{$|\lambda_h - 1/R_m|$}
\addplot[gray, dotted, line width=0.7pt, forget plot] coordinates {(1.5708, 0.18) (0.1963, 0.00281)};
\node[gray, font=\footnotesize] at (axis cs:0.3, 0.02) {$\mathcal{O}(h^2)$};
\end{loglogaxis}
\end{tikzpicture}
\caption{Empirical eigenvalue convergence against $1/R_m$.  Dotted gray: $\mathcal{O}(h^2)$ reference.}
\label{fig:abc-convergence}
\end{subfigure}\hfill
\begin{subfigure}[c]{0.4\linewidth}
\centering
\includegraphics[width=\linewidth]{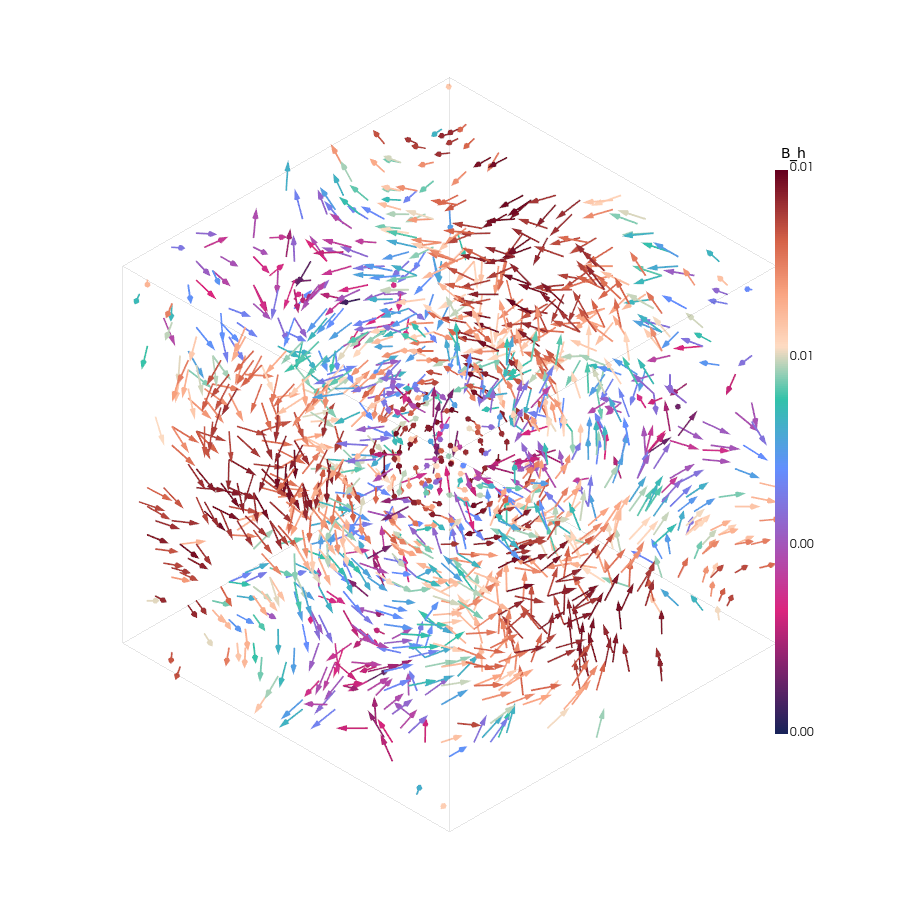}
\caption{Oblique 3D view of $\vec{B}_h$ on the finest mesh on the periodic cube $[0,2\pi]^3$.  Arrow glyphs are colored by $|\vec{B}_h|$.}
\label{fig:abc-eigenfunction-3d}
\end{subfigure}

\vspace{1ex}

\begin{subfigure}[c]{\linewidth}
\centering
\setlength{\tabcolsep}{1pt}
\renewcommand{\arraystretch}{0}
\begin{tabular}{cccc}
\includegraphics[width=0.24\linewidth]{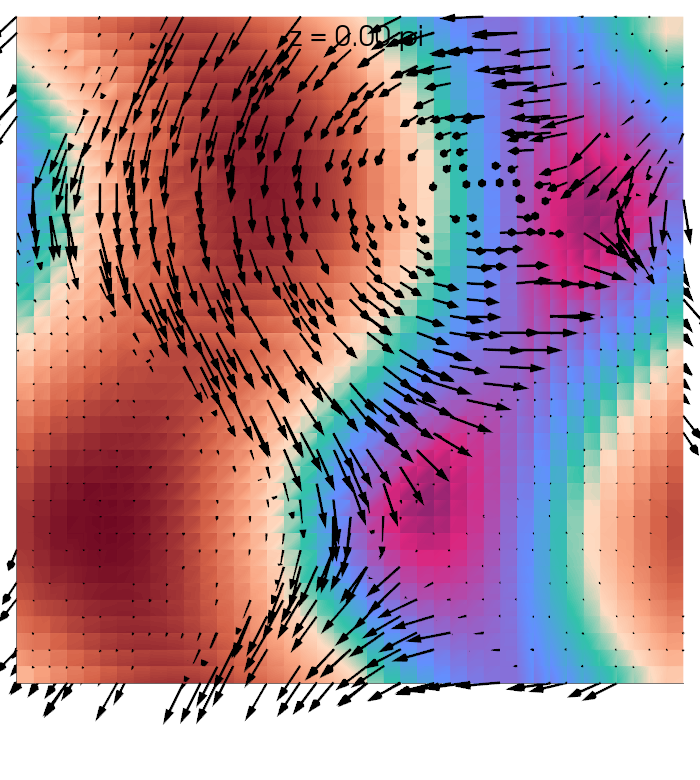} &
\includegraphics[width=0.24\linewidth]{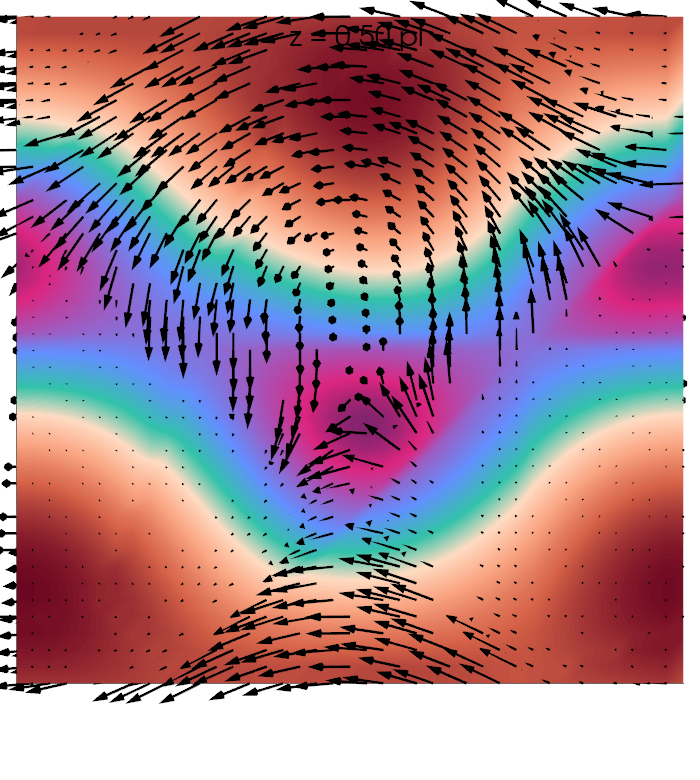} &
\includegraphics[width=0.24\linewidth]{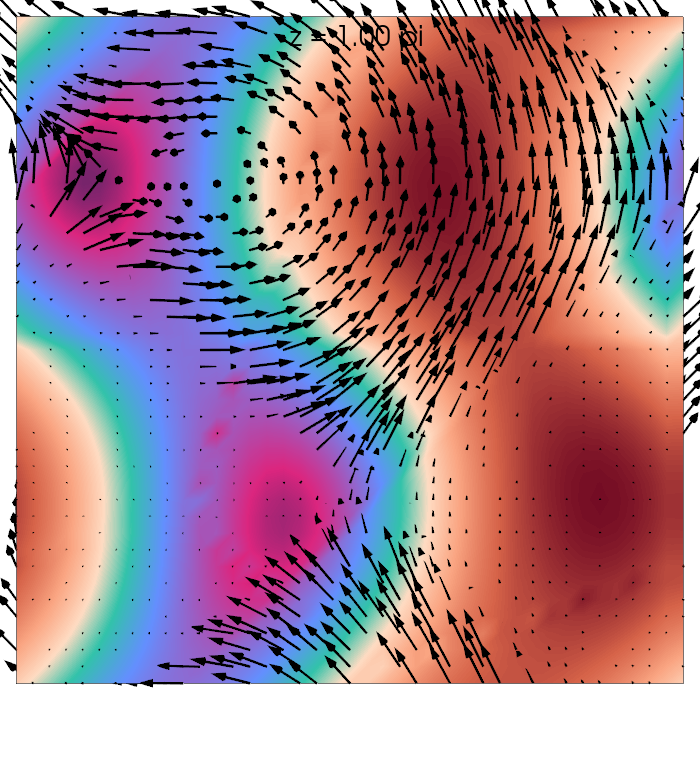} &
\includegraphics[width=0.24\linewidth]{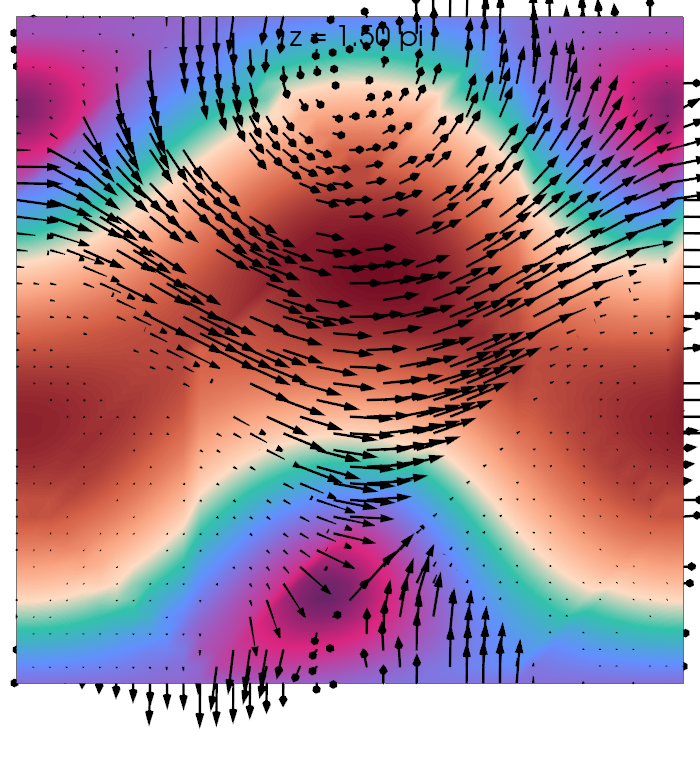} \\
\end{tabular}
\caption{Four $z$-slices of $\vec{B}_h$ at $z = 0,\,\pi/2,\,\pi,\,3\pi/2$ (left to right).  Slice color is $|\vec{B}_h|$; arrows show the projected in-plane vector field.}
\label{fig:abc-eigenfunction-slices}
\end{subfigure}
\caption{ABC flow on the 3D periodic cube at $R_m=1$.  The eigenvalue tracks the Babu\v{s}ka--Osborn $\mathcal{O}(h^2)$ prediction; the 3D view and the four $z$-slices exhibit the rotating structure that the FEEC discretization recovers.}
\label{fig:abc}
\end{figure}

\subsection{Validation of the dynamo theorems}\label{sec:numerical:validation}
We now revisit the theorems stated in Section~\ref{sec:numerical:theorems} in the light of the experiments above.

The three-dimensional manifold experiments of Section~\ref{sec:numerical:surface} confirm the lower bound of Theorem~\ref{thm:Arnold1} for the magnetic $2$-form ($k = n-1 = 2$): Table~\ref{tab:harmonic-counts} reports the number of near-zero eigenvalues of the $H(\div)$ discretization on the flat $3$-torus (where $b_2 = 3$) and on the flat $3$-cylinder (where $b_2 = 1$), and for every $(\vec{u}, R_m)$ combination the count is $\geq b_2$.

Equality with $b_2$ is observed throughout Table~\ref{tab:harmonic-counts}, for every wind and diffusivity on both manifolds, in agreement with Theorem~\ref{thm:Arnold2}.
The Cartesian pseudospectra of Figure~\ref{fig:pseudospectra} verify Theorem~\ref{thm:king} in the two gradient winds in our catalog: at wind 0 (zero field, trivially the gradient of a constant) and at wind 1 ($\vec{u}=(1,1,1)$, a constant gradient on $[0,\pi]^3$) the computed eigenvalues for both the full Hodge $\Lambda^0$ and the primal $\Lambda^3$ discretizations lie on the non-negative real axis, while for the divergence-free non-gradient winds 4 and 5 of the same figure complex-conjugate eigenvalue pairs become clearly visible.
\section{Conclusion}\label{sec:conclusions}
This paper presents a finite element exterior calculus approach for solving the spectral problem associated with the advection-diffusion operator on differential forms, with applications to the numerical dynamo problem and Fokker--Planck equations. Unlike analysis for advection-diffusion of differential forms, where it is common to assume $\bm u\in W^{1, \infty}$ \cite{2016HeumannHiptmairPagliantini,2026LiLuoWangWu,2024WangWu,heumann2012fully,2013HeumannHiptmair,2020WuXu}, we proved the convergence of the proposed scheme under {the assumption of $L^{\infty}$} velocity fields. This extension covers physically relevant cases such as shocks. This is possible since spectral problems allow an arbitrary shift, which controls the advection term in the analysis.

 These results, combined with recent advances in FEEC, provide insight into the numerical treatment of open problems in dynamo theory; however, several important further steps are required, such as the development of robust solvers.

As a further step, even pseudospectra may not capture the full picture of dynamo action. Ultimately, one must return to the fully coupled nonlinear MHD system and study its long-term evolution. Further discussions are left for future investigation.

\section*{Acknowledgments}

DB is a member of the INdAM research group GNCS and his work was
partially supported by KAUST - CRG13-2025 grant (6911).
The work of KH and YL was supported by a Royal Society University Research Fellowship (URF$\backslash$R1$\backslash$221398) and the enhanced research expenses RF$\backslash$ERE$\backslash$221047. {KH was supported by an ERC Starting Grant (project 101164551, GeoFEM)}. Views and opinions expressed are however those of the authors only and do not necessarily reflect those of the European Union or the European Research Council. Neither the European Union nor the granting authority can be held responsible for them.
UZ gratefully acknowledges the Erwin Schrödinger International Institute for Mathematics and Physics (ESI) for supporting the stay in Vienna where this paper was completed and the Research Council of Norway, project number 357556 (LaVa) for supporting his research.

\section*{Use of artificial intelligence tools}
The authors used ChatGPT (OpenAI) and Claude Opus 5.5 (Anthropic) during the preparation of this manuscript. ChatGPT was used to improve the writing, identify potential errors and typographical issues in mathematical arguments and proofs, and provide suggestions on proofs and their presentation. Claude Opus 5.5 was used to provide suggestions on mathematical proofs and to improve the efficiency and style of computational code. All AI-assisted content was reviewed by the authors, and all mathematical and computational suggestions were independently verified. The authors take full responsibility for the mathematical content, numerical results, code, and final manuscript.

\bibliographystyle{siam}
\bibliography{bib}
\end{document}